      \theoremstyle{plain}
      \newtheorem{theorem}{Theorem}[section]
      \newtheorem{lemma}[theorem]{Lemma}
      \newtheorem{corollary}[theorem]{Corollary}
      \theoremstyle{Conjecture}
       \newtheorem{Conjecture}[theorem]{Conjecture}
      \theoremstyle{definition}
      \newtheorem{definition}[theorem]{Definition}
      \theoremstyle{remark}
      \newtheorem{remark}[theorem]{Remark}
      \def\@setcopyright{}
      \def\serieslogo@{}
\DeclareMathOperator{\spn}{span}
\begin{document}

   \author {{Ali Feizmohammadi}}
   \address{University College London}
   \email{a.feizmohammadi@ucl.ac.uk}





   


   \title[DN map and Locally Euclidean Geometries]{Uniqueness of a Potential from Boundary Data in Locally Conformally Transversally Anisotropic Geometries}


   \begin{abstract}
      Let $(\Omega^3,g)$ be a compact smooth Riemannian manifold with smooth boundary and suppose that $U$ is a an open set in $\Omega$ such that $g|_U$ is the Euclidean metric. Let $\Gamma= \overline{U} \cap \partial \Omega$ be connected and suppose that $U$ is the convex hull of $\Gamma$. We will study the uniqueness of an unknown potential for the Schr\"{o}dinger operator $ -\triangle_g +  q  $ from the associated Dirichlet to Neumann map, $\Lambda_q$. We will prove that if the potential $q$ is a priori explicitly known in $U^c$, then one can uniquely reconstruct $q$ over the convex hull of $\Gamma$ from $\Lambda_q$. We will also outline a reconstruction algorithm. More generally we will discuss the cases where $\Gamma$ is not connected or $g|_{U}$ is conformally transversally anisotropic and derive the analogous result.
   \end{abstract}







   \maketitle



\tableofcontents

\section{Introduction}
 In 1980, Calder\'{o}n \cite{C} proposed the following question: Can the conductivity of an unknown medium be determined from voltage and current measurments on the boundary? Since then this question has been of considerable interest in the area of inverse problems and there is a rich literature of results.
Let us assume that $\Omega$ is a domain in $\mathbb{R}^n$ with $C^{\infty}$ boundary. Under the assumption of no sources or sinks in the domain we can represent the current flow in a body through the following elliptic partial differential equation:
 \[
     \left\{\begin{array}{lr}
       \nabla \cdot ( \gamma \nabla u)=0 & \text{for }  x \in \Omega \\
       u=f & \text{for} x \in \partial \Omega\\
        \end{array}\right\}
  \]
\noindent Here $\gamma$ is a positive definite matrix representing the conductivity tensor for the medium and $f$ is a given voltage on the boundary. The normal component of the current flux at the boundary is given by the expression:
\[ \Lambda_{\gamma} f := \gamma \nabla u \cdot \nu |_{\partial \Omega}\]
where $\nu$ represents the outward pointing unit normal vector on $\partial \Omega$. 
The problem of Calder\'{o}n asks whether the knowledge of $\Lambda_\gamma$ uniquely determines the conductivity tensor $\gamma$. While the initial formulation of the problem assumed a scalar (isotropic) conductivity, much current research considers the case of anisotropic conductivities. This is motivated by applications to medical imaging: muscle or heart tissues have conductivity that depends not only on the location, but also on the direction (for instance along or across the muscle fiber). Since the conductivity equation above corresponds to a Laplace Beltrami equation in a Riemannian manifold, to bring to bear essential tools from differential geometry, it is useful to reformulate the question as follows:\\
\noindent Let $(M,g)$ be a compact smooth Riemannian manifold with smooth boundary and consider harmonic functions $u$ with prescribed Dirichlet data on $\partial M$:

\[  
   \left\{\begin{array}{lr}
       \triangle_g u=0 & \text{for }  x \in M \\
       u=f & \text{for} x \in \partial M\\
        \end{array}\right\}
  \] 

\noindent For $f \in H^{\frac{1}{2}}(\partial M)$ the above partial differential equation is known to have a unique solution $u \in H^1(M)$. Inspired by the physical intrepretation of the problem we define the Dirichlet to Neumann map $\Lambda_g$ as a bilinear functional as follows:
\[ (\Lambda_g f, h) = \int_{M} \langle du,dv\rangle_g dV_g\]
where $ v \in H^1(M)$ satisfies $ v|_{\partial M} = h$ and $ u \in H^1(M)$ satisfies $ -\triangle_g u=0$ with $ u|_{\partial M}=f$.
In fact we note that for smoother data $ f \in H^{\frac{3}{2}}(\partial M)$:
\[ \Lambda_g f = \partial_{\nu} u|_{\partial M}. \]

\noindent The Calder\'{o}n problem can then be reformulated as follows: Does the knowledge of the Dirichlet to Neumann map $\Lambda_g$ uniquely determine the unknown metric $g$?There is an immediate obstruction (first noted by Luc Tartar \cite{KV}) to uniqueness due to diffeomorphisms that fix the boundary as can be seen from the following Lemma \cite{DKSU}.
\begin{lemma}
If $F: M \to M$ is a diffeomorphism such that $F|_{\partial M}=I$ then:
\[ \Lambda_{F^{*}g}=\Lambda_g\]
Here $F^{*}g$ denotes the pullback of the metric $g$.
\end{lemma}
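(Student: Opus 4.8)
The plan is to exploit the fact that the entire construction defining $\Lambda_g$ --- the Laplace--Beltrami operator, the notion of $H^1$ solution, and the Dirichlet energy integral --- is natural under diffeomorphisms, so that pulling everything back by $F$ produces literally the same object. Write $\tilde g = F^*g$ and fix arbitrary boundary data $f,h \in H^{\frac{1}{2}}(\partial M)$. Let $u \in H^1(M)$ solve $\triangle_{\tilde g} u = 0$ with $u|_{\partial M} = f$, and let $v \in H^1(M)$ be any function with $v|_{\partial M} = h$.

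First I would record the naturality of the Laplace--Beltrami operator: since $F\colon (M,F^*g) \to (M,g)$ is an isometry, one has $\triangle_{F^*g}(\phi\circ F) = (\triangle_g\phi)\circ F$ for every $\phi$. Applying this with $\phi = u\circ F^{-1}$ gives $\triangle_g(u\circ F^{-1}) = (\triangle_{\tilde g}u)\circ F^{-1} = 0$. Because $F$ --- and hence $F^{-1}$ --- restricts to the identity on $\partial M$, the functions $u\circ F^{-1}$ and $v\circ F^{-1}$ lie in $H^1(M)$ and have boundary traces $f$ and $h$ respectively; in particular $u\circ F^{-1}$ is the $g$-harmonic extension of $f$. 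Next I would invoke the change-of-variables formula for the Dirichlet integral, which is precisely the statement that this functional is a diffeomorphism invariant:
\[
  \int_M \langle d(u\circ F^{-1}),\, d(v\circ F^{-1})\rangle_g \, dV_g
  \;=\; \int_M \langle du,\, dv\rangle_{F^*g}\, dV_{F^*g}
  \;=\; \int_M \langle du,\, dv\rangle_{\tilde g}\, dV_{\tilde g}.
\]
By definition the right-hand side is $(\Lambda_{\tilde g} f, h)$, while by the previous step the left-hand side is $(\Lambda_g f, h)$. As $f,h \in H^{\frac{1}{2}}(\partial M)$ were arbitrary, $\Lambda_{\tilde g} = \Lambda_g$.

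There is no essential difficulty to overcome here; the only care required is routine regularity bookkeeping --- one may first run the argument for smooth $f,h$ and pass to general boundary data by density of $C^\infty(\partial M)$ in $H^{\frac{1}{2}}(\partial M)$ together with continuity of the Poisson solution operator --- and noting that ``$F$ fixes the boundary'' means $F|_{\partial M}=I$, hence also $F^{-1}|_{\partial M}=I$, so that boundary traces are genuinely unchanged under composition with $F^{\pm 1}$. One could instead attempt the pointwise identity $\Lambda_{\tilde g}f = \Lambda_g f$ directly via normal derivatives for $f\in H^{\frac{3}{2}}(\partial M)$, but since $dF$ need not be the identity in the normal direction along $\partial M$ this route is more delicate; the bilinear-form argument above sidesteps it entirely.
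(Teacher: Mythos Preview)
The paper does not actually prove this lemma; it merely states it and attributes it to \cite{DKSU}. Your argument is correct and is exactly the standard one: naturality of $\triangle_g$ and of the Dirichlet bilinear form under the isometry $F\colon (M,F^*g)\to(M,g)$, combined with $F|_{\partial M}=I$ to preserve boundary traces, yields $(\Lambda_{F^*g}f,h)=(\Lambda_g f,h)$ for all $f,h$. Your closing remark that the bilinear-form route avoids the subtlety of $dF$ in the normal direction is well taken.
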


\noindent We can now state the Calder\'{o}n conjecture in Riemannian geometries as follows:

\begin{Conjecture}
\label{Calderon}
Let $(M,g)$ denote an n-dimensional Riemannian manifold with smooth boundary with $n \geq 3$. Then $\Lambda_g$ uniquely determines $g$ up to diffeomorphisms that fix the boundary.
\end{Conjecture}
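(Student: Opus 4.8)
The plan is to follow the complex geometric optics (CGO) strategy of Sylvester--Uhlmann, adapted to the Riemannian setting through the machinery of limiting Carleman weights. The first step is a reduction from the metric to a scalar potential. Since $\Lambda_g$ determines the full jet of $g$ at $\partial M$ in boundary normal coordinates, one may, after composing with a boundary-fixing diffeomorphism, assume that two candidate metrics $g_1, g_2$ with $\Lambda_{g_1} = \Lambda_{g_2}$ agree to infinite order along $\partial M$. Passing to the associated Schr\"odinger operators and gauging away a conformal factor, the equality of the Dirichlet--to--Neumann maps becomes the statement that the potentials $q_1, q_2$ of $-\triangle_g + q$ produce identical boundary data, so that uniqueness of the metric reduces, modulo the unavoidable diffeomorphism gauge recorded in the Lemma above, to showing $q_1 = q_2$.

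The second step is the interior determination of $q := q_1 - q_2$. From $\Lambda_{q_1} = \Lambda_{q_2}$ one derives the bilinear identity $\int_M q\, u_1 u_2\, dV_g = 0$, valid for all solutions $u_i$ of $(-\triangle_g + q_i)u_i = 0$. Into this I would insert CGO solutions of the form $u = e^{\tau \varphi}(a + r)$, where $\varphi$ is a limiting Carleman weight, $a$ is a WKB amplitude concentrating along a geodesic of the transversal geometry, and $r \to 0$ as the large parameter $\tau \to \infty$; controlling $r$ hinges on a Carleman estimate for the conjugated operator $e^{-\tau\varphi}(-\triangle_g)e^{\tau\varphi}$ with a gain of one derivative. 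In the limit the identity collapses to a (possibly attenuated) geodesic ray transform of $q$ over the transversal manifold, and injectivity of that transform forces $q \equiv 0$, hence $g_1 = g_2$ up to the diffeomorphism gauge.

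The step I expect to be the decisive obstacle, and indeed the reason this assertion is stated as a conjecture rather than proved as a theorem, is the very first ingredient of the CGO construction: the existence of a limiting Carleman weight $\varphi$ on $(M,g)$. Such a weight exists only when $g$ is, in suitable coordinates, conformal to a product $c(x)(dx_1^2 \oplus g_0)$, that is, when $(M,g)$ is conformally transversally anisotropic. For a generic metric no limiting Carleman weight exists, the conjugated Laplacian admits no usable Carleman estimate, and the exponentially growing/decaying solutions above simply cannot be built. Even within the admissible class one must separately establish injectivity of the relevant transversal ray transform, which is itself known only under curvature or convexity hypotheses.

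The realistic outcome of this program is therefore conditional. Where the geometry cooperates, for instance on an open set $U$ on which $g$ is Euclidean or conformally transversally anisotropic, the CGO solutions can be constructed with weights adapted to $U$, and the argument above recovers $q$ on the convex hull of $\Gamma = \overline{U} \cap \partial M$ once $q$ is known on $U^c$; this is exactly the content of the paper's main results. Promoting this local reconstruction to the full unconditional statement for an arbitrary $(M,g)$ would require a genuinely new mechanism replacing limiting Carleman weights on regions where they fail to exist, and it is precisely this gap that keeps Conjecture \ref{Calderon} open.
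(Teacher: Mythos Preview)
The statement you are addressing is not proved in the paper: it is explicitly recorded as a \emph{conjecture}, and the paper itself remarks that ``the general case of smooth manifolds is a major open problem.'' There is therefore no proof in the paper to compare against. Your write-up is not a proof either, and you are right that it cannot be one with present technology.

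That said, your diagnosis of the obstruction is accurate and matches the paper's viewpoint. The CGO/Carleman-weight strategy you outline is exactly the one underlying the known partial results (Sylvester--Uhlmann in the Euclidean case, Dos Santos Ferreira--Kenig--Salo--Uhlmann for CTA geometries), and the decisive failure point is indeed the nonexistence of limiting Carleman weights on a general $(M,g)$. Your closing paragraph also correctly situates the paper's contribution: rather than attacking Conjecture~\ref{Calderon} directly, the paper builds a Carleman weight that is a limiting Carleman weight only on a locally Euclidean (or CTA) subregion $U$ and is merely H\"ormander-hypoelliptic elsewhere, which suffices to run the CGO argument and recover $q$ on the convex hull of $\Gamma=\overline{U}\cap\partial\Omega$ when $q$ is known on $U^c$. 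One small correction: your reduction ``from metric to potential'' in the first paragraph presupposes that the two metrics are already conformal, which is an additional hypothesis (this is Conjecture~\ref{conformal} in the paper, not Conjecture~\ref{Calderon}); the full metric-up-to-diffeomorphism problem is strictly harder and does not reduce to the potential problem in general.
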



\noindent  In \cite{LU} it is showed that $\Lambda_g$ is a self-adjoint elliptic pseudo-differential operator of order $1$ and that the symbol of $\Lambda_g$ determines the metric and its jet at the boundary. This yields the solution of the Calder\'{o}n problem in the case where the metric is real analytic with some mild topological assumptions on the manifold.The general case of smooth manifolds is a major open problem.\\ 
There is a simpler version of Conjecture ~\ref{Calderon} that is concerned with determining the conformal factor when the conformal class of the manifold is known.
The standard starting point when one wishes to determine the conformal factor is the observation that (see e.g. \cite{S}):

\[ \triangle_{cg} u = c^{-\frac{n+2}{4}}(\triangle_g + q_c) (c^{\frac{n-2}{4}}) \]

\noindent where $q_c = c^{\frac{n-2}{4}}\triangle_{cg}(c^{-\frac{n-2}{4}})$. It can be shown that if $c|_{\partial M} =1$ and $\partial_\nu c|_{\partial M}=0$ we have:
\[ \Lambda_{cg} = \Lambda_{g,-q_c}\]
where $\Lambda_{g,q}$ in general denotes the Dirichlet to Neumann map (DN map) for the Schr\"{o}dinger equation:
\[
  \left\{\begin{array}{lr}
       (-\triangle_g + q) u=0 & \text{for }  x \in M \\
       u=f & \text{for} x \in \partial M\\
        \end{array}\right\}
  \] 
We will assume that $0$ is not a Dirichlet eigenvalue for this equation (the assumption can easily be removed by using all Cauchy data pairs). $\Lambda_{g,q}: H^{\frac{1}{2}}(\partial M) \to H^{-\frac{1}{2}}(\partial M)$ can be defined weakly through the bilinear pairing:
\[ (\Lambda_{g,q} f, h) = \int_{M} \langle du,dv\rangle_g dV_g + \int_{M} quv dV_g \]
where $ v \in H^1(M)$ satisfies $ v|_{\partial M} = h$ and $ u \in H^1(M)$ satisfies $ (-\triangle_g +q) u=0$ with $ u|_{\partial M}=f$.
As before we note that for smoother Dirichlet data $ f \in H^{\frac{3}{2}}(\partial M)$ we have that:
\[ \Lambda_{g,q} f = \partial_{\nu} u|_{\partial M}. \]

\noindent Thus one can observe that the Calder\'{o}n problem in the conformal setting can be posed as follows:

\begin{Conjecture}
\label{conformal}
Let $(M,g)$ be a smooth compact Riemannian manifold with smooth boundary and let $q \in L^{\infty}(M)$ be an unknown bounded function. Then the knowledge of $\Lambda_{g,q}$ will uniquely determine $q$.
\end{Conjecture}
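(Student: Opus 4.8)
The plan is to reduce the inverse problem to an integration‑by‑parts identity and then to probe that identity with a rich family of special solutions. First I would establish the basic orthogonality relation: if $q_1, q_2 \in L^\infty(M)$ satisfy $\Lambda_{g,q_1} = \Lambda_{g,q_2}$, then for any $u_j \in H^1(M)$ solving $(-\triangle_g + q_j)u_j = 0$ one has
\[
  \int_M (q_1 - q_2)\, u_1 u_2 \, dV_g = 0.
\]
This follows by expanding the bilinear form defining $\Lambda_{g,q}$ and using that the two maps agree on the shared boundary data together with the symmetry of the pairing. The entire problem is therefore to show that the products $u_1 u_2$, as the $u_j$ range over all solutions of the two Schr\"{o}dinger equations, are dense enough in $L^1(M)$ to force $q_1 = q_2$.

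To generate such solutions I would use complex geometric optics (CGO): one seeks $u = e^{s\varphi}(a + r)$ with $\varphi$ a real phase, $s \to \infty$ a large parameter, $a$ a suitable amplitude, and a correction term satisfying $\|r\|_{L^2} = o(1)$. The existence of such $u$ hinges on a Carleman estimate for the conjugated operator $e^{-s\varphi}(-\triangle_g + q)e^{s\varphi}$, which in turn requires $\varphi$ to be a \emph{limiting Carleman weight}. In the Euclidean model this is the Sylvester--Uhlmann construction: taking $\varphi$ linear and choosing complex frequencies $\rho_1, \rho_2 \in \mathbb{C}^n$ with $\rho_j \cdot \rho_j = 0$ and $\rho_1 + \rho_2 = i\xi$, the product $u_1 u_2$ concentrates to $e^{i\xi \cdot x}$, so the orthogonality relation becomes $\widehat{(q_1 - q_2)}(\xi) = 0$ for every $\xi$, whence $q_1 = q_2$.

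The difficulty on a general manifold is that limiting Carleman weights, equivalently the product structure they encode, need not exist; this is precisely the obstruction that keeps Conjecture~\ref{conformal} open in full generality. The natural class on which the method does go through is the conformally transversally anisotropic one, where $g = c\,(dt^2 \oplus g_0)$ on a cylinder $\mathbb{R} \times M_0$. There $\varphi = t$ is a limiting Carleman weight, and the amplitudes may be taken as Gaussian beam quasimodes concentrating along geodesics of the transversal $(M_0, g_0)$. Passing the orthogonality relation to the limit $s \to \infty$ then converts it, after a Fourier transform in $t$, into the vanishing of the attenuated geodesic ray transform of $q_1 - q_2$ over $M_0$.

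The main obstacle I expect is twofold. First, producing a limiting Carleman weight at all is a rigid condition that arbitrary metrics fail to meet, so the unrestricted conjecture cannot be attacked head‑on by this route; one is forced either to assume a CTA structure globally or to localise to a region carrying such a weight. Second, even granting the CTA reduction, recovering $q_1 - q_2$ demands \emph{injectivity} of the attenuated geodesic ray transform on $(M_0, g_0)$, which is known under simplicity or suitable convexity/foliation hypotheses on the transversal but remains open for general geometries. Closing these two gaps — existence of the weight and invertibility of the transform — is exactly what separates the presently known results from the full conjecture, and it is what motivates the localised, explicitly‑known‑exterior hypotheses adopted in the sequel.
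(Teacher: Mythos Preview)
The statement you were asked to address is labelled a \emph{Conjecture} in the paper, and the paper does not purport to prove it; indeed the text explicitly remarks that for general smooth Riemannian manifolds this is ``still a difficult open question'' and then states its main theorems (Theorems~\ref{euclid}, \ref{support}, \ref{cta0}) as partial results under strong additional hypotheses. So there is no ``paper's own proof'' to compare against.

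Your write-up is not a proof either, and you are up front about that: you correctly isolate the standard orthogonality relation, outline the CGO/Carleman-weight mechanism, and then identify the two genuine obstructions --- nonexistence of limiting Carleman weights on arbitrary $(M,g)$ and the need for injectivity of the (attenuated) geodesic ray transform on the transversal --- as the reasons the conjecture remains open. That diagnosis matches the paper's own framing and the literature it cites (\cite{SU}, \cite{DKSU}, \cite{DKLS}). The only point worth sharpening is the final sentence: the paper's localised hypotheses do not merely \emph{assume} a CTA structure on a subregion, they also assume the potential is known outside that subregion, and the technical novelty is to \emph{extend} a local limiting Carleman weight to a global weight satisfying the one-sided H\"{o}rmander condition (Lemma~\ref{est}). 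If you want your overview to segue cleanly into the paper's contribution, that extension step is the idea to flag.
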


\begin{remark}
\noindent Henceforth, for the sake of brevity, we will let $\Lambda_q$ to denote $\Lambda_{g,q}$. Here the reader should note that the geometry $g$ is a priori known and $q$ is unknown.
\end{remark}

\noindent For $g$ Euclidean and $n \geq 3$ this conjecture was solved by John Sylvester and Gunther Uhlmann in 1987 \cite{SU}.  In two dimensions the problem was first solved by Adrian Nachman \cite{NII} for potentials that arise from positive functions $c$ as above. A different proof was given by Bukhgeim in 2008 that works for general potentials \cite{B}. For general smooth Riemannian manifolds this is still a difficult open question. The most general result in this direction is the uniqueness of the potential for geometries that are conformally transversally anisotropic (CTA). These are geometries where the manifold has a product structure $M=\mathbb{R} \times M_0$ with $(M_0,g_0)$ being called the transversal manifold and the metric takes the form:
\[ g = c(t,x) (dt^2 + g_0(x)).\]
\begin{definition}
Let $(M_0,g_0)$ denote a smooth Riemannian manifold with boundary. We say $(M_0,g_0)$ is simple if $\partial M_0$ is strictly convex and any two points in $M_0$ can be connected with a unique length minimizing geodesic.
\end{definition}

\bigskip

\noindent D. Dos Santos Ferreira, C. E. Kenig, M. Salo and G. Uhlmann \cite{DKSU} proved in 2009 that the potential $q$ can be uniquely determined from the Dirichlet to Neumann map in CTA gemoetries under the additional assumption that the transversal manifold $(M_0,g_0)$ is simple. In 2013, their result was strengthend by assuming weaker restrictions on the transversal manifold, namely that the geodesic ray transform on $(M_0,g_0)$ is injective \cite{DKLS}. 
\bigskip

\begin{definition}
Let $U_0 \subset M_0$ and $f \in C^1(M_0)$. We say the geodesic ray transform on $M_0$ is locally injective with respect to $U_0$, if the following holds:
$$\int_{\gamma} f  =0$$
for all geodesics $\gamma$ with end points on $\partial M_0 \cap \overline{U_0}$ implies that $$ f|_{U_0}\equiv 0.$$

\end{definition}

\section{Statement of Results}

\noindent This paper is concerned with the proof of the following theorems. These answer, at least in part, a conjecture by Lauri Oksanen \cite{link}.

\begin{theorem}
\label{euclid}
Let $(\Omega^3,g)$ denote a compact smooth Riemannian manifold with smooth boundary. Let $U \subset \Omega$ be an open subset such that $\Gamma=\overline{U} \cap \partial \Omega$ is non-empty, connected and strictly convex. Suppose $\Gamma$ can be covered with a coordinate chart in which $g|_U$ is the Euclidean metric and that $U$ is the convex hull of $\Gamma$.   Suppose $q$ is a smooth function and suppose $q-q_*$ is compactly supported in $U$ where $q_*$ is a globally known smooth function. Then the knowledge of $\Lambda_q$ will uniquely determine $q$.\\ 
\end{theorem}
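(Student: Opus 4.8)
The plan is to reduce the problem to a local uniqueness statement for the Schrödinger operator on the Euclidean piece $U$, using complex geometric optics (CGO) solutions concentrated near $\Gamma$. Since $q-q_*$ is compactly supported in $U$, it suffices to show $\Lambda_q = \Lambda_{q_*}$ forces $q=q_*$ on $U$. The first step is to record the standard integral identity: if $u_1$ solves $(-\triangle_g + q)u_1 = 0$ and $u_2$ solves $(-\triangle_g + q_*)u_2 = 0$ with the same Cauchy data available on $\Gamma$ (which follows from $\Lambda_q = \Lambda_{q_*}$ together with the fact that the potentials agree near $\partial\Omega \setminus \Gamma$, so one can use Runge-type approximation / the fact that $q - q_*$ vanishes outside $U$ to localize the boundary integration to $\Gamma$), then $\int_U (q - q_*)\, u_1 u_2 \, dV_g = 0$. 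The delicate point here is legitimizing the restriction of the test integral to $U$: one needs that solutions can be chosen so their Cauchy data on $\partial\Omega\setminus\Gamma$ does not contribute, which is where the hypothesis that $U$ is the convex hull of $\Gamma$ and that $\Gamma$ is strictly convex enters — it guarantees that suitable CGO solutions, built in the Euclidean chart, decay or can be cut off without the cutoff error spoiling the identity.

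The second step is to construct, in the Euclidean coordinates on a neighborhood of $\overline{U}$, CGO solutions of the form $u_j = e^{\rho_j \cdot x}(1 + r_j)$ with $\rho_j \in \mathbb{C}^3$, $\rho_j \cdot \rho_j = 0$, $|\rho_j| \to \infty$, and $\|r_j\|_{L^2(U)} \to 0$, exactly as in Sylvester–Uhlmann \cite{SU}, but with the two phases chosen so that $\rho_1 + \rho_2 = i\xi$ for an arbitrary fixed $\xi \in \mathbb{R}^3$. Because $g|_U$ is genuinely Euclidean and $U$ is bounded, these solutions exist on $U$; the only subtlety is that they must be realized as (approximate) solutions of the true equation on all of $\Omega$. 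Here the convex-hull hypothesis is again essential: one extends $u_j$ from $U$ to $\Omega$, multiplies by a cutoff $\chi$ supported near $\overline U$, and absorbs the commutator $[\triangle_g,\chi]u_j$ using that the phase $e^{\rho_j\cdot x}$ is large precisely on the "far" side, so that strict convexity of $\Gamma$ forces the error to be exponentially dominated — this is the usual mechanism by which local data on a convex piece of the boundary suffices.

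The third step is to pass to the limit $|\rho_j|\to\infty$ in $\int_U (q-q_*) u_1 u_2 \, dV_g = 0$. Writing $u_1 u_2 = e^{i\xi\cdot x}(1 + r_1 + r_2 + r_1 r_2)$ and using $\|r_j\|_{L^2(U)} \to 0$ together with $q - q_* \in L^\infty$ compactly supported in $U$, we obtain $\widehat{(q-q_*)\mathbf 1_U}(\xi) = 0$ for every $\xi \in \mathbb{R}^3$, hence $q = q_*$ on $U$ by Fourier inversion. For the reconstruction algorithm alluded to in the abstract, one notes that each step is constructive: the boundary integral identity determines $\int_U (q-q_*)u_1u_2$ directly from $\Lambda_q$ applied to the (computable) boundary values of the CGO solutions, and Fourier inversion then recovers $q$ pointwise on $U$.

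The main obstacle I expect is precisely the localization in the first and second steps: making rigorous that $\Lambda_q=\Lambda_{q_*}$ yields the interior orthogonality identity \emph{restricted to $U$} when we only have access to, and only control the CGO solutions on, a convex neighborhood of $\Gamma$. This requires carefully quantifying the exponential smallness of the cutoff commutator error against the exponential growth of the Carleman weight, and using strict convexity of $\Gamma$ (and that $U$ is the convex hull of $\Gamma$, so that every point of $U$ is "seen" by the relevant complex phases) to ensure no error term survives the limit. The CTA and disconnected-$\Gamma$ generalizations mentioned at the end of the introduction would then follow by replacing the Euclidean CGO solutions with the Gaussian-beam / CGO solutions adapted to the transversal geometry, invoking local injectivity of the geodesic ray transform in place of Fourier inversion.
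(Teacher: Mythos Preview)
Your proposal has a genuine gap in Step 2, precisely at the point you yourself flag as the main obstacle. The mechanism you propose --- cut off the Euclidean CGO $e^{\rho\cdot x}(1+r)$ by a function $\chi$ and ``absorb'' the commutator $[\triangle_g,\chi]u_j$ using exponential domination coming from strict convexity of $\Gamma$ --- does not work here. The support of $\nabla\chi$ lies on the \emph{interior} portion of $\partial U$, namely the flat ``lids'' $\partial U\cap\Omega$ of the convex hull, not on $\Gamma\subset\partial\Omega$. On those lids the real part of the phase $\mathrm{Re}(\rho)\cdot x$ takes the same range of values as on $U$ itself, so there is no exponential smallness to exploit. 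To correct $\chi u_j$ to a genuine solution on all of $\Omega$ one would need a Carleman estimate on $(\Omega,g)$ with weight $e^{\tau\,\mathrm{Re}(\rho)\cdot x}$; but $\mathrm{Re}(\rho)\cdot x$ is only defined, and only a limiting Carleman weight, on the Euclidean chart --- on the a priori unknown metric $g$ outside $U$ there is no reason for any such estimate to hold. The ``usual mechanism'' you invoke is the partial-\emph{boundary}-data mechanism (Bukhgeim--Uhlmann, Kenig--Sj\"ostrand--Uhlmann), in which the CGO is already a global solution and the weight suppresses the inaccessible part of $\partial\Omega$; it does not let you localize the CGO \emph{construction} to a subdomain.

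The paper's route is different in kind. Rather than cutting off a local solution, it builds a \emph{global} weight $\tilde\phi_0$ on a slight enlargement $\Omega_1$ of $\Omega$ that equals the linear function $x_1$ on the Euclidean slab and transitions, via an explicit convexification $x_1\chi_0(x_3)+(F_\lambda\circ\omega)(x)$, to a function satisfying the H\"ormander hypoellipticity inequality $D^2\tilde\phi_0(X,X)+D^2\tilde\phi_0(\nabla\tilde\phi_0,\nabla\tilde\phi_0)\ge 0$ on all of $\Omega_1$. This yields a global Carleman estimate, from which CGO solutions are built on the whole manifold with \emph{amplitudes} (not the solutions) compactly supported in the Euclidean region; the global estimate controls the remainder, and there is no commutator to absorb. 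The information is then extracted not by Fourier inversion over $\xi\in\mathbb{R}^3$ but by concentrating the amplitude on $2$-planes $\Pi$ (with an auxiliary fractional-power parameter $\epsilon=\tau^{-\beta}$), reading off $\int_\Pi (\partial_3^k q)\,\bar z^k h(z)$ for all holomorphic $h$ and all $k$, and applying Stone--Weierstrass. You mention Runge approximation in passing; the paper notes that a quantitative Runge argument would indeed give uniqueness, so that alternative is valid, but it is non-constructive --- and in any case you do not carry it out: your main line rests on the cutoff step, which fails.
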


\bigskip

\begin{figure}[h]
           \centering
	\begin{tikzpicture}
	\shade[ball color = gray!40, opacity = 0.4] (0,0) circle (2cm);
	\draw[line width = 0.2mm](0,0) circle (2cm);
	\draw[line width = 0.2mm] (-1.83,0.8) arc (180:360:1.83 and 0.15);
	\draw[line width = 0.1mm, dashed] (1.83,0.8) arc (0:180:1.83 and 0.15);
	\node[scale =1] at (0.2,0) {$(U,g_{\mathbb{E}^3})$};
	\node[scale =1] at (2,1.7) {$(\Omega,g)$};
	\node[scale =1] at (2.3,0) {$\Gamma$};
	\node[scale =1] at (-2.3,0) {$\Gamma$};
	\draw[line width = 0.2mm] (-1.83,-0.8) arc (180:360:1.83 and 0.15);
	\draw[line width = 0.1mm, dashed] (1.83,-0.8) arc (0:180:1.83 and 0.15);
	\end{tikzpicture}
\end{figure}

\noindent If $\Gamma$ is not connected, we have the following result:

\bigskip

\begin{theorem}
\label{support}
Let $(\Omega^3,g)$ denote a compact smooth Riemannian manifold with smooth boundary. Let $U \subset \Omega$ be an open subset such that $\Gamma=\overline{U} \cap \partial \Omega$ is non-empty, and strictly convex. Let $\Gamma=\cup_{i=1}^{l} \Gamma^i$ where $\Gamma^i$ denotes the connected components of $\Gamma$. Suppose $q$ is a smooth function and suppose $q-q_*$ is compactly supported in $U$ where $q_*$ is a globally known smooth function. Furthermore suppose $U$ can be covered with a coordinate chart in which $g|_U$ is the Euclidean metric. Let $U^i$ denote the convex hull of $\Gamma_i$ and let us assume that $U^i \cap U^j = \emptyset \hspace{2mm}\forall i,j$ and $U = \cup_{i=1}^l U^i$. Then the knowledge of $\Lambda_q$ will uniquely determine $q$.\\ 
\end{theorem}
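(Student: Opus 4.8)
The plan is to reduce Theorem~\ref{support} to Theorem~\ref{euclid} by a localization argument that exploits the disjointness of the convex hulls $U^i$. Since $q-q_*$ is supported in $U=\bigcup_i U^i$ and the $U^i$ are pairwise disjoint, it suffices to show that $\Lambda_q$ determines $q|_{U^i}$ for each fixed $i$ separately; once all the pieces are recovered, $q$ is known everywhere because $q=q_*$ outside $U$. So the crux is to isolate the contribution of a single connected component $\Gamma^i$ of the boundary and run the single-component result on it.

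First I would set up the CGO (complex geometrical optics) machinery exactly as in the proof of Theorem~\ref{euclid}: because $g|_U$ is Euclidean and $U^i$ is the convex hull of the strictly convex boundary patch $\Gamma^i$, one has access to the standard Sylvester--Uhlmann exponential solutions $u=e^{\rho\cdot x}(1+r)$ with $\rho\in\mathbb{C}^3$, $\rho\cdot\rho=0$, on the Euclidean piece, glued to genuine solutions of $(-\triangle_g+q)u=0$ on all of $\Omega$ via a Runge-type approximation / boundary reduction argument (the same one used for the connected case, relying on the fact that $q-q_*$ is compactly supported inside $U$ so that the "difference" problem only sees the known geometry near $\partial\Omega$). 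Next I would use the standard integral identity: for two potentials $q_1,q_2$ with $\Lambda_{q_1}=\Lambda_{q_2}$ and $u_j$ the corresponding solutions,
\[
\int_\Omega (q_1-q_2)\,u_1 u_2 \, dV_g = 0 .
\]
Since $q_1-q_2$ is supported in $U$, this becomes an integral over the Euclidean region, and by choosing $\rho_1,\rho_2$ with $\rho_1+\rho_2 = i\xi$ for $\xi$ in a suitable cone, the leading term of the product $u_1u_2$ is $e^{i\xi\cdot x}$, yielding $\widehat{(q_1-q_2)\mathbf{1}_{U}}(\xi)=0$ for $\xi$ in that cone — but only after one controls which boundary patch the CGO solutions are "anchored" to.

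The key step — and the one where the disjointness hypothesis does the real work — is showing that one can choose the CGO solutions so that their relevant mass concentrates near a single component $\Gamma^i$, so that the integral identity localizes to $U^i$ and reconstructs $\widehat{(q_1-q_2)\mathbf{1}_{U^i}}$. The point is that $U^i$, being the convex hull of $\Gamma^i$ with $\Gamma^i$ strictly convex, is itself a convex Euclidean domain meeting $\partial\Omega$ exactly in $\Gamma^i$; the hypotheses $U^i\cap U^j=\emptyset$ and $U=\bigcup U^i$ mean the components are genuinely geometrically separated, so a cutoff supported near $\Gamma^i$ (constant $=1$ on $U^i$, vanishing on the other $U^j$) can be used to split the problem. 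One then applies the argument of Theorem~\ref{euclid} verbatim on each $(U^i,\Gamma^i)$ — including the analyticity/uniqueness-of-convex-hull step that lets one conclude $q=q_*$ on $U^i$ from knowledge of the Fourier transform on the appropriate cone — and assembles the conclusion $q=q_*+\sum_i (q-q_*)\mathbf{1}_{U^i}$, which is now fully determined by $\Lambda_q$.

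I expect the main obstacle to be the localization/gluing itself: making rigorous the claim that the Runge approximation and the CGO construction for the component $\Gamma^i$ do not interact with the other components $U^j$, i.e. that one really can produce solutions of the global equation on $\Omega$ that look like Euclidean CGOs near $\Gamma^i$ while being harmless near the other patches. This requires care because the solution operator is nonlocal, so "supported near $\Gamma^i$" has to be interpreted in terms of the Cauchy data and a careful use of unique continuation from $\partial\Omega\setminus\Gamma$, where $q=q_*$ is known; the disjointness of the convex hulls is exactly what guarantees there is enough "room" (a connected region where the geometry and potential are both known) to carry out this unique continuation and route the information to a single component. Everything else — the integral identity, the Fourier-transform inversion on a cone, and the deduction of $q$ from its Fourier transform on a conic neighborhood together with its a priori known values outside $U$ — is routine once the localization is in place.
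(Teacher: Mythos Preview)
Your reduction strategy is not how the paper proceeds, and the step you yourself flag as ``the main obstacle'' is a genuine gap rather than a technicality. First, the proof of Theorem~\ref{euclid} in this paper does \emph{not} use the Sylvester--Uhlmann solutions $e^{\rho\cdot x}(1+r)$ with a Fourier-transform-on-a-cone argument; it builds a \emph{global} Carleman weight $\tilde\phi_0$ on all of $\Omega_1$ (equal to the limiting weight $x_1$ on the Euclidean slab, convexified via $F_\lambda\circ\omega$ outside) and produces CGOs $e^{\tau\tilde\Phi_\epsilon}(v_\epsilon+r_\epsilon)$ whose amplitude $v_\epsilon$ is compactly supported near a plane $\Pi\subset U$. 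The integral identity works because $\tilde\Phi_\epsilon+\tilde\Psi_\epsilon$ vanishes on the \emph{entire} support of $q_1-q_2$. If you build a weight adapted only to $U^i$, then on each $V^j$ with $j\neq i$ the real part $\tilde\phi_0+\tilde\psi_0=2F_\lambda\circ\omega$ is strictly positive, so $e^{\tau(\tilde\Phi_\epsilon+\tilde\Psi_\epsilon)}$ blows up exponentially there; since $q_1-q_2$ need not vanish on $V^j$, the term $\int_{V^j}(q_1-q_2)e^{\tau(\tilde\Phi_\epsilon+\tilde\Psi_\epsilon)}r^1_\epsilon r^2_\epsilon$ is uncontrolled and swamps the $O(\tau^{-1})$ remainder. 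Cutoffs do not fix this, because the corrections $r^k_\epsilon$ come from a global solvability argument and are not localized. Likewise, you cannot ``apply Theorem~\ref{euclid} verbatim to $(U^i,\Gamma^i)$'': that theorem requires $q-q_*$ to be supported in the single convex hull, which fails here unless you already know $q$ on the other $U^j$.

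The paper's route is instead to build \emph{one} global Carleman weight that is simultaneously the limiting weight $x_1^i$ on every component $U^i$. The new ingredient is Lemma~\ref{global}: there exists a smooth $\omega:\Omega_1\to\mathbb R$ with $d\omega\neq0$ everywhere and $\omega\equiv x_3^i$ on each $U_1^i$. This is proved by perturbing an arbitrary smooth extension to a Morse function and then sliding the finitely many critical points out of $\Omega_1$ along disjoint paths that avoid $U$ (using $\dim\Omega_1=3>2$). With this $\omega$ in hand, the Carleman estimate (Lemma~\ref{est2}) and the entire CGO construction go through exactly as in the connected case, with the amplitude supported near a plane $\Pi^i$ in whichever component one wishes; the phase cancellation now holds on all of $V=\bigcup V^j$ at once. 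So the disjointness hypothesis is used to manufacture a single noncritical $\omega$ compatible with all components, not to decouple the analysis component by component.
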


\bigskip

\noindent More generally, we have the following result which does not require $g$ to be Euclidean in $U$:

\bigskip

\begin{theorem}
\label{cta0}
Let $(\Omega^3,g)$ denote a compact smooth Riemannian manifold with smooth boundary with $\Omega=I \times \Omega_0$. Let $U_0$ be an open subset such that $\Gamma_0=\overline{U_0} \cap \partial \Omega_0$ is non-empty and that $U_0$ is the convex hull of $\Gamma_0$. Suppose that $(\partial I \times U_0) \cup (I \times \partial U_0)$ is connected, and let $U= I \times U_0$. Suppose $q$ is a smooth function that is explicitly known in $U^c$, and that $U$ can be covered with a coordinate chart in which $(U, g|_U)$ is conformally transversally anisotropic. Then the knowledge of $\Lambda_q$ will uniquely determine $q$ provided that $U_0$ is simple and the geodesic transform on $\Omega_0$ is locally injective with respect to $U_0$.\\ 
\end{theorem}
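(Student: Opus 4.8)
The plan is to combine the standard interior reduction of the Calder\'{o}n problem with a Runge approximation that transplants the conformally transversally anisotropic (CTA) machinery from $U$ into $\Omega$. Suppose $\Lambda_{q_1}=\Lambda_{q_2}$ with $q_1=q_2=q_*$ on $U^c$, and set $f=q_1-q_2$, supported in $\overline U$. Self-adjointness of the Dirichlet to Neumann maps together with $\Lambda_{q_1}=\Lambda_{q_2}$ gives the usual identity
\[
\int_\Omega f\,u_1u_2\,dV_g=\int_U f\,u_1u_2\,dV_g=0
\]
for all $u_1\in H^1(\Omega)$ with $(-\triangle_g+q_1)u_1=0$ and all $u_2\in H^1(\Omega)$ with $(-\triangle_g+\bar q_2)u_2=0$. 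Choosing in addition $u_1,u_2$ with the \emph{same} Dirichlet data, the difference $w=u_1-u_2$ solves $(-\triangle_g+q_*)w=0$ in $\Omega\setminus\overline U$ with vanishing Cauchy data on $\partial\Omega$; using the connectedness hypothesis on $(\partial I\times U_0)\cup(I\times\partial U_0)$ together with the convex-hull property to ensure that $\Omega\setminus\overline U$ is connected and reaches $\partial\Omega$, weak unique continuation forces $u_1=u_2$ throughout $\Omega\setminus\overline U$.

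This unique continuation statement is precisely what powers a Runge approximation across $\partial U$: by a Hahn--Banach/duality argument (given $\phi\in L^2(U)$ extended by zero, solve $(-\triangle_g+\bar q_1)\psi=\phi$ in $\Omega$ with $\psi|_{\partial\Omega}=0$, substitute into the identity above, apply the unique continuation in $\Omega\setminus\overline U$, and integrate by parts over $U$) one shows that the restrictions to $U$ of global solutions are dense in $L^2(U)$ among all local solutions of $(-\triangle_g+q_1)v_1=0$, respectively $(-\triangle_g+\bar q_2)v_2=0$, on $U$. Since $f$ is bounded with support in $\overline U$, the identity passes to the limit and yields
\[
\int_U f\,v_1v_2\,dV_g=0
\]
for all such local solutions on the CTA manifold $(U,g|_U)$; in other words we have recovered exactly the interior Calder\'{o}n identity on $(U,g|_U)$, with no boundary data required on $\partial U$.

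From here I would run the CTA argument of \cite{DKSU,DKLS} on $U$. Writing $g|_U=c\,(dt^2\oplus g_0)$ in the CTA chart and conjugating by $c^{(n-2)/4}$ reduces the identity to $\int_U(\tilde q_1-\tilde q_2)\tilde v_1\tilde v_2\,dV_{dt^2\oplus g_0}=0$, the conformal correction being common to $q_1$ and $q_2$. Using $t$ as a limiting Carleman weight and the simplicity of $U_0$ to solve the associated eikonal and transport equations, one inserts, for each $\lambda\in\mathbb{R}$ and each unit-speed geodesic $\gamma$ of $(U_0,g_0)$, complex geometrical optics solutions $\tilde v_1\sim e^{-s(t+i\varphi)-i\lambda t}(a+r_1)$ and $\tilde v_2\sim e^{s(t+i\varphi)}(a'+r_2)$; letting $s\to\infty$ collapses the identity, after normalization, to
\[
\int_\gamma\Big(\int_I(q_1-q_2)(t,y)\,e^{-i\lambda t}\,dt\Big)\,d\ell_{g_0}(y)=0 .
\]
Since $U_0$ is the convex hull of $\Gamma_0$, any $\Omega_0$-geodesic with endpoints on $\Gamma_0=\overline{U_0}\cap\partial\Omega_0$ remains inside $U_0$ and is a geodesic of the above type, so the displayed relation says that the geodesic ray transform of $h_\lambda(y):=\int_I(q_1-q_2)(t,y)e^{-i\lambda t}dt$ vanishes over all $\Omega_0$-geodesics with endpoints in $\overline{U_0}\cap\partial\Omega_0$. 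Local injectivity of the ray transform on $\Omega_0$ with respect to $U_0$ gives $h_\lambda|_{U_0}\equiv0$ for every $\lambda$, and Fourier inversion in $t$ yields $q_1=q_2$ on $U=I\times U_0$; together with $q_1=q_2$ on $U^c$ this proves uniqueness, and unwinding the construction (Carleman estimate, duality, explicit CGO and ray-transform inversion) produces the reconstruction algorithm.

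The main obstacle is the second step. In contrast with the global CTA problem, the distinguished structure here lives only inside $U$, so the topological hypotheses must be calibrated exactly: enough connectedness of $\partial U$ and the convex-hull condition to make $\Omega\setminus\overline U$ a single component touching $\partial\Omega$ (so that unique continuation applies), and enough control of the interface $\partial U$ to transplant the \emph{exponentially large} CGO solutions on $U$ to genuine global solutions on $\Omega$ via Runge approximation without destroying the limiting identity. A secondary difficulty is matching the family of geodesics produced by the CGO construction on $(U_0,g_0)$ with the family appearing in the local-injectivity hypothesis on $\Omega_0$, which is exactly where simplicity of $U_0$ and the convex-hull property must be used together.
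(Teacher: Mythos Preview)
Your uniqueness argument is essentially correct, but it is a genuinely different route from the paper's, and your final sentence about reconstruction does not survive.

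The paper does \emph{not} reduce to the interior problem on $U$ via Runge approximation. Its central contribution is the opposite maneuver: it builds a \emph{global} Carleman weight on all of $\Omega_1$ that coincides with the limiting Carleman weight $x_1$ on the CTA region and is continued outside $U$ by a convexified function $F_\lambda\circ\omega$ so that the H\"ormander hypoellipticity condition holds everywhere (Lemma~\ref{est cta}). This global weight produces global CGO solutions on $\Omega_1$ directly (Lemmas~\ref{harmonic2}, \ref{shrodinger2}), which are then substituted into the boundary identity; no density argument is needed. In fact the paper explicitly flags your approach in Section~4: ``It is possible though to use a density argument and a quantitative version of Runge approximation \dots\ to conclude uniqueness of potential $q$ in $U$. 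The proof will not be constructive however and an altogether different approach is probably needed to give a reconstruction algorithm.''

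What each route buys: your Runge/Hahn--Banach reduction is shorter and uses only off-the-shelf CTA machinery on $U$; once you have $\int_U f\,v_1v_2=0$ for all local solutions, the \cite{DKSU} CGOs and the local ray-transform injectivity finish the job exactly as you wrote. The paper's route is technically heavier (the new global weight and the associated estimates) but is the whole point of the paper: because the CGOs are global, one can set up a Faddeev-type Green function $G_\tau$ and a boundary integral equation $(I-\Gamma_\tau)(u_1|_{\partial\Omega})=u_0|_{\partial\Omega}$, and thereby recover the trace of the special solutions from $\Lambda_q$ (Section~8). Your claim that ``unwinding the construction \dots\ produces the reconstruction algorithm'' is not justified: the Hahn--Banach step is nonconstructive, and the Runge approximants of the exponentially large CGOs come with no effective bounds, so you cannot read off boundary data of specific solutions from $\Lambda_q$ this way. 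For uniqueness your proposal is fine; for reconstruction it is not, and that gap is precisely what the paper's global-weight construction is designed to close.
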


\bigskip


\section{Acknowledgements}
This work is part of the PHD thesis research of the author at the University of Toronto. I would like to thank Professors Adrian Nachman and Spyros Alexakis for all their support, advice, and all the time that they spent on this work. I am indebted to them for helpful discussions and ideas.

\section{Outline of the  Paper}
Sections 4,5, 6 and 7 are concerned with proving Theorem ~\ref{euclid}. In section $4$ we will prove a Carleman type estimate through Lemma ~\ref{est} which is a key ingredient of the proof. Section $5$ will use this estimate to construct solutions to the Schr\"{o}dinger equation concentrating on 2-planes. These solutions are explicitly used in section $6$ to derive the uniqueness of the potential. Section $7$ contains the reconstruction algorithm. Section 8 delves into the proofs for Theorem ~\ref{support} and Theorem ~\ref{cta0}. 
\bigskip

\noindent Let us make a few remarks before we begin the proofs. Note that one may argue that since $q-q_*$ is supported in $U$ and since the metric $g$ is known everywhere in $\Omega$ we might be able to determine the Dirichlet to Neumann map (DN map) for the Schr\"{o}dinger equation in the smaller domain $U$ from the DN map in $(\Omega,g)$. This is however not immediate as the set of solutions to $ (-\triangle_g+q)u=0$ in $(\Omega,g)$ is not the same as the set of solutions in $(U,g_{\mathbb{E}^3})$. It is possible though to use a density argument and a quantitative version of Runge approximation as discussed in \cite{R} to conclude uniqueness of potential $q$ in $U$. The proof will not be constructive however and an altogether different approach is probably needed to give a reconstruction algorithm. Motivated by this, we approach the question quite differently. Uniqueness of the potential function is proved and we sketch out a reconstruction algorithm as well. The key in our reconstruction algorithm is to construct a symmetric Fadeev type green function (as in \cite{F}) for the Laplacian operator. We will subsequently use a strong unique continuation result to complete the proof  (see e.g. \cite{U}). 

\section{Carleman Estimates}

\theoremstyle{definition}
\begin{definition}{}
A smooth function $\phi$ is called a Carleman weight with respect to $(\Omega_1,g)$  if there exists $h_0>0$ such that the following estimate holds:
\[ \| e^{\frac{\phi}{h}} \triangle_g (e^{-\frac{\phi}{h}} u) \|_{L^2(\Omega_1)} \ge \frac{C}{h} \|u\|_{L^2(\Omega_1)}+ C  \|Du\|_{L^2(\Omega_1)} \]
$\forall 0< h < h_0$ and $ u \in C^{\infty}_{c} (\Omega_1)$.\\ 
\end{definition}

\noindent Let us extend the manifold $\Omega$ to a slightly larger manifold $\Omega_1$. We extend $q$ to all of $\Omega_1$ by setting it equal to zero outside $\Omega$ and extend $g$ smoothly to $\Omega_1$ such that $ g|_{U_1 }$ is Euclidean. Here $U_1$ denotes the extension of $U$ to the larger manifold $\Omega_1$. Note that $U_1$ has a foliation by a family of planes $\mathbb{A}=\{\Pi_t\}_{t \in I}.$ We start by taking a fixed plane $\Pi \in \mathbb{A}$. A local coordinate system $(x_1,x_2,x_3)$ can be constructed in $U_1$ such that $\Pi=\{x_3=0\}$ with $(x_1,x_2)$ denoting the usual cartesian coordinate system on the plane $\Pi$ and $\partial_3$ denoting the normal flow to this plane. We can assume that support of $q$ lies in the compact set $V \subset\subset \{-t_1\leq x_3 \leq t_2\}$ with $t_1,t_2>0$. In this framework $U_1= \cup_{c=-t_1-\delta_1}^{c=t_2+\delta_2} \{x_3=c\}$ with $\delta_i >0$ for $i \in \{1,2\}$.\\
\begin{definition}
Let us define two smooth functions $\omega:\Omega_1 \to \mathbb{R}$ and $\tilde{\omega}:\Omega_1 \to \mathbb{R}$ as follows:
\begin{itemize}

\item{Let $\omega:\Omega_1 \to \mathbb{R}$ be any smooth function such that $d\omega \neq 0$ everywhere in $\Omega_1$ and $\omega(x) \equiv x_3$  for $ x \in U_1$.}
\item{Let $\tilde{\omega}:\Omega_1 \to \mathbb{R}$ be any smooth function such that $\tilde{\omega}(x) \equiv x_2$  for $ x \in U_1$.}
\end{itemize}
\end{definition} 
\begin{definition}
Let us define two globally defined $C^{k-1}(\overline{\Omega_1})$ functions $\chi_0 : \Omega_1 \to \mathbb{R}$ and $F_{\lambda}: \mathbb{R} \to \mathbb{R}$  as follows:
 \[
    \chi_0(x) = \left\{\begin{array}{lr}
        1, & \text{for }  -t_1<x_3<t_2\\
        (1-(\frac{x_3-t_2}{\delta_2})^{8k})^k, & \text{for } t_2 \leq x_3 \leq t_2+\delta_2\\
    (1-(\frac{x_3+t_1}{\delta_1})^{8k})^k, & \text{for } -t_1-\delta_1 \leq x_3 \leq -t_1\\
         0  & \text{otherwise }
        \end{array}\right\}
  \]

 \[
    F_{\lambda} (x) = \left\{\begin{array}{lr}
        0, & \text{for }   -t_1<x<t_2\\
        e^{\lambda (\frac{x-t_2}{\delta_2})^2} (\frac{x-t_2}{\delta_2})^{2k}, & \text{for } t_2 \leq x  \\
       e^{\lambda (\frac{x+t_1}{\delta_1})^2} (\frac{x+t_1}{\delta_1})^{2k}  , & \text{for }  x \leq -t_1\\
        \end{array}\right\}\\
\\
  \]
\end{definition}

\noindent We will utilize the functions defined above to construct an appropriate global Carleman weight in the entire domain $(\Omega_1,g)$. The key idea here is to start with a local limiting Carleman weight over the Euclidean neighborhood (for example $\phi(x)=x_1$) and extend it smoothly to the entire manifold in a way that it will satisfy the H\"{o}rmander hypoellipticity condition in one direction. We believe this is the first time that the idea of extending a Limiting Carleman Weight is being implemented to solve an inverse problem.
\\

\begin{lemma}
\label{est}

Let $ \tilde{\phi}_0(x_1,x_2,x_3) = x_1 \chi_0(x) + (F_{\lambda}\circ \omega)(x)$ where  $k \geq 1$ is an arbitraty integer and $\lambda(\Omega_1, k,||g_{ij}||_{C^2})$ is sufficiently large. Then the H\"{o}rmander hypo-ellipticity condition is satisfied in $\Omega_1$, that is to say:
\[ D^2\tilde{\phi}_0 (X,X) + D^2\tilde{\phi}_0(\nabla \tilde{\phi}_0,\nabla\tilde{\phi}_0) \geq 0\]
whenever $|X|=|\nabla \tilde{\phi}_0|$ and $\langle \nabla\tilde{\phi}_0, X \rangle =0$.

\end{lemma}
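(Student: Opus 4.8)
The plan is to verify the pointwise inequality
\[
D^2\tilde{\phi}_0(X,X) + D^2\tilde{\phi}_0(\nabla\tilde{\phi}_0,\nabla\tilde{\phi}_0) \geq 0
\]
region by region, exploiting the structure of $\tilde{\phi}_0 = x_1\chi_0 + F_\lambda\circ\omega$ and the fact that $\chi_0 \equiv 1$, $F_\lambda \equiv 0$ on the ``core'' slab $\{-t_1 < x_3 < t_2\}$, while on the transition slabs $t_2 \le x_3 \le t_2+\delta_2$ and $-t_1-\delta_1 \le x_3 \le -t_1$ the term $F_\lambda\circ\omega$ carries a large parameter $\lambda$ with which we can dominate everything else. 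First I would record the Hessian computation: since $\omega \equiv x_3$ on $U_1$ and the metric is Euclidean there, on $U_1$ we have $\nabla\tilde{\phi}_0 = (\chi_0 + x_1\partial_1\chi_0,\; x_1\partial_2\chi_0,\; x_1\partial_3\chi_0 + F_\lambda'(x_3))$ and $D^2\tilde{\phi}_0$ splits into a ``bounded'' part coming from $x_1\chi_0$ (whose $C^2$ norm is controlled uniformly in $\lambda$, using $\chi_0 \in C^{k-1}$ with $k \ge 2$ so that $\chi_0$ is at least $C^1$ — one should take $k\ge 2$, or more carefully note $\chi_0\in C^{k-1}$ and the exponents $8k$ were chosen to make this smooth enough) and a part $F_\lambda''(x_3)\, dx_3\otimes dx_3$ that is large and positive-semidefinite in the $dx_3$ direction on the transition region. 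Outside $U_1$, $\tilde\phi_0$ is an arbitrary fixed smooth extension, but there $\chi_0 \equiv 0$ and $F_\lambda\circ\omega$ is the only surviving term, and one checks that $\omega$ can be chosen with $d\omega\neq 0$ so that $F_\lambda\circ\omega$ again supplies a $\lambda$-large convex contribution wherever it is nonzero; on the complement of the support of both $\chi_0$ and $F_\lambda\circ\omega$, $\tilde\phi_0$ is locally constant (value $x_1$... no — here one must be a little careful: on $\{x_3 < -t_1-\delta_1\}\cap U_1$ and $\{x_3>t_2+\delta_2\}\cap U_1$ we have $\chi_0=0$, $F_\lambda\circ\omega = 0$, hence $\tilde\phi_0\equiv 0$ there and the inequality is trivial, and globally this is the region where we glued in the smooth extension, which we are free to choose so that $d\tilde\phi_0$ stays nonzero and the hypoellipticity inequality holds — indeed $d\omega\neq 0$ everywhere is exactly the freedom used).

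The key computational step is the estimate on the transition slabs. There, writing $s = (x_3-t_2)/\delta_2 \in [0,1]$ (and symmetrically on the other side), $F_\lambda(x_3) = e^{\lambda s^2} s^{2k}$, so
\[
F_\lambda''(x_3) = \frac{1}{\delta_2^2}\, e^{\lambda s^2}\Bigl( 4\lambda^2 s^{2k+2} + (8k+2)\lambda s^{2k} + 2k(2k-1)s^{2k-2}\Bigr).
\]
The dominant term is $4\lambda^2 s^{2k+2}/\delta_2^2$, and the crucial observation is that all the terms in $F_\lambda''$ that could be negative are either absent or of lower order in $\lambda$; in fact for $k\ge 1$ every term above is nonnegative, so $F_\lambda'' \ge 0$ throughout, and moreover $F_\lambda''$ grows like $\lambda^2 F_\lambda$ while $F_\lambda'$ grows like $\lambda F_\lambda$. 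I would then plug $X$ with $|X| = |\nabla\tilde\phi_0|$, $\langle\nabla\tilde\phi_0,X\rangle = 0$ into the left-hand side: the bad terms are bounded by $C(\Omega_1,k,\|g_{ij}\|_{C^2})\,(1 + |x_1|)\,|\nabla\tilde\phi_0|^2$ plus cross terms between the $x_1\chi_0$-Hessian and $F_\lambda'$, all of which are $O(\lambda)\cdot|\nabla\tilde\phi_0|^2$ at worst (linear in $\lambda$, through one factor of $F_\lambda'$), whereas the good term $D^2\tilde\phi_0(\nabla\tilde\phi_0,\nabla\tilde\phi_0) \ge F_\lambda''(x_3)\,\langle\nabla\tilde\phi_0,\partial_3\rangle^2$ contains $\langle\nabla\tilde\phi_0,\partial_3\rangle = x_1\partial_3\chi_0 + F_\lambda'$, whose square has a term $(F_\lambda')^2 \sim \lambda^2 F_\lambda^2$, giving altogether a contribution of order $\lambda^4 F_\lambda^3$ or at least a clean positive term of strictly higher order in $\lambda$ than any of the bad terms, once one is on the set where $F_\lambda' \neq 0$. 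On the set where $F_\lambda' = 0$ but we are still inside a transition slab — i.e. at $s = 0$, which is the junction $\{x_3 = t_2\}$ — one uses instead that the $x_1\chi_0$ part is by itself a limiting Carleman weight perturbation that is at least hypoelliptic (or that $\chi_0\equiv 1$ near there so the Hessian of $x_1\chi_0$ degenerates to the Hessian of $x_1$, which vanishes in Euclidean coordinates), so the inequality reduces to $0 \ge 0$.

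Assembling these: choose $k\ge 1$, then choose $\lambda = \lambda(\Omega_1,k,\|g_{ij}\|_{C^2})$ large enough that on each transition slab the higher-order-in-$\lambda$ positive term beats the sum of all lower-order terms uniformly over $x\in\Omega_1$ and over admissible $X$ (a compactness argument in $x$ and in the direction of $X$ makes ``large enough'' uniform), and observe that the core slab and the far region contribute $0\ge 0$ automatically since $\tilde\phi_0$ reduces to $x_1$ or to $0$ in Euclidean coordinates there. I expect the main obstacle to be the bookkeeping on the transition slabs near the junction points $\{x_3 = t_2\}$ and $\{x_3 = -t_1\}$, where $\chi_0$ loses a derivative (it is only $C^{k-1}$), where $F_\lambda$ and its first derivative vanish so the large parameter gives no help, and where one must simultaneously control the cross terms mixing the $x_1$-dependent Hessian of $x_1\chi_0$ with the derivatives of $F_\lambda\circ\omega$; the resolution is that precisely at those junctions $\chi_0$ is identically $1$ on one side so its derivatives of order $\ge 1$ vanish there (by the matching of the polynomial pieces to the constant $1$, which is exactly why the exponents $8k$ appear), killing the cross terms, and the required regularity $C^{k-1}$ with $k$ chosen $\ge 2$ suffices to make $D^2(x_1\chi_0)$ a genuine continuous bounded tensor. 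A secondary point to handle carefully is the behavior outside $U_1$: there one genuinely uses the freedom in choosing the smooth extensions of $g$, $\omega$, $\tilde\omega$, and $\tilde\phi_0$ — in particular $d\omega\neq 0$ — and must cite that any smooth function with nonvanishing differential can be perturbed, or composed with a convex function of large modulus of convexity, to satisfy H\"ormander's condition, which is the standard mechanism for producing limiting Carleman weights on manifolds admitting them.
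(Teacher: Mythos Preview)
Your region-by-region strategy matches the paper's, but two of the regional arguments, as you have written them, do not go through.

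On the transition slabs your ``good term of order $\lambda^4$ beats bad terms of order $\lambda$'' heuristic fails near the junction $s = (x_3-t_2)/\delta_2 \to 0^+$: there the $\lambda$-large parts of $F_\lambda'$ and $F_\lambda''$ all carry extra factors of $s^2$ and vanish, so the leading behaviour $F_\lambda' \sim s^{2k-1}$, $F_\lambda'' \sim s^{2k-2}$, $\chi_0' \sim s^{8k-1}$, $\chi_0'' \sim s^{8k-2}$ is $\lambda$-independent and large $\lambda$ buys you nothing. The paper does not try a domination argument in this region; it proves the stronger fact that on the transition slab \emph{both} $D^2\tilde\phi_0(\nabla\tilde\phi_0,\nabla\tilde\phi_0) \ge 0$ and $D^2\tilde\phi_0(X,X) \ge 0$ hold separately, using two ingredients you did not exploit: the sign structure $\chi_0' \le 0$, $F_\lambda' \ge 0$ on $\{t_2 \le x_3 \le t_2+\delta_2\}$ (so the cross term $-2\chi_0\chi_0' F_\lambda'$ appearing in $D^2\tilde\phi_0(Z,Z)$ for $Z = (\partial_3\tilde\phi_0)\partial_1 - (\partial_1\tilde\phi_0)\partial_3$ is genuinely nonnegative, not merely small), and the vanishing-order comparison $|x_1\chi_0'| \lesssim s^{8k-1} \ll \sqrt{\lambda k}\,s^{2k} \lesssim F_\lambda'$ valid \emph{uniformly} in $s \in (0,1]$ once $\lambda$ is large. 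Your remark that the derivatives of $\chi_0$ ``vanish there, which is exactly why the exponents $8k$ appear'' is pointing at the right mechanism, but the actual content is this uniform-in-$s$ order comparison, not a pointwise vanishing at the single point $s=0$.

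On the exterior region $\Omega_1 \setminus U_1$ you assert that $F_\lambda\circ\omega = 0$ and hence $\tilde\phi_0 \equiv 0$. This is false: $F_\lambda$ is defined on all of $\mathbb{R}$ and is strictly positive whenever $\omega \notin [-t_1,t_2]$; on this region $\chi_0 \equiv 0$ but $\tilde\phi_0 = F_\lambda(\omega)$ is the \emph{entire} weight, and this is exactly where the metric is non-Euclidean. The correct argument (which your final sentence gestures toward) is the paper's: since $|X| = |\nabla\tilde\phi_0| = |F_\lambda'(\omega)|\,|\nabla\omega|$ one gets $|D^2\tilde\phi_0(X,X)| \le C|F_\lambda'|^3$ with $C = C(\|\omega\|_{C^2},g)$ independent of $\lambda$, while $D^2\tilde\phi_0(\nabla\tilde\phi_0,\nabla\tilde\phi_0)$ contains the term $F_\lambda''(F_\lambda')^2|\nabla\omega|^4$; since $F_\lambda''/|F_\lambda'| \ge c\lambda$ once $|\omega - t_2| \ge \delta_2$ (respectively $|\omega+t_1| \ge \delta_1$), this dominates for $\lambda$ large.
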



\begin{proof}
The proof will be divided into three parts. We will consider the the three regions $A_1=\{ -t_1 \leq x_3 \leq t_2 \}$ , $ A_2=\{  t_2 \leq  x_3 \leq t_2+\delta_2 \} \cup \{-t_1-\delta_1 \leq x_3 \leq -t_1\}$ and $A_3 = \Omega_1 \setminus (A_1 \cup A_2)$ and prove the inequality holds in all these regions. Recall that the  metric is Euclidean on $U_1$ which implies that both $A_1$ and $A_2$ are Euclidean. Let us first consider $A_1$. Note that in this region $ \tilde{\phi}_0(x_1,x_2,x_3) = x_1$ and since the metric is Euclidean in this region we deduce that $ D^2  \tilde{\phi}_0 (X,Y) \equiv 0$ for all $X,Y$ and hence the H\"{o}rmander condition is satisfied.\\

\noindent Let us now focus on the region denoted by $A_3$. Notice that in this region we have $ \tilde{\phi}_0 = F_{\lambda}(\omega(x))$. Therefore the level sets of $\tilde{\phi}_0(x)$ will simply be the level sets $ \{\omega(x) =c\}$. 

$$  D^2\tilde{\phi}_0(X,X) = \langle D_X \nabla \tilde{\phi}_0,X \rangle$$

\noindent Since $ |X| =|F'_{\lambda}(\omega)||\nabla \omega|$ we obtain the following estimate: 
$$  D^2\tilde{\phi}_0(X,X) \leq C |F_{\lambda}'(\omega)|^3$$

\noindent where it is important to note that the constant $C$ is independent of $\lambda$. Furthermore we have:

$$ D^2\tilde{\phi}_0(\nabla \tilde{\phi}_0,\nabla \tilde{\phi_0}) = \frac{1}{2} \nabla \tilde{\phi}_0 (|\nabla \tilde{\phi}_0|^2). $$
Since $\tilde{\phi}_0 = F_\lambda ( \omega(x))$:
$$  D^2\tilde{\phi}_0(\nabla \tilde{\phi}_0,\nabla \tilde{\phi_0})= \frac{1}{2}( F'(\omega)^3 \nabla \omega( |\nabla \omega|^2) + 2F'(\omega)^2F''(\omega) |\nabla \omega|^4).  $$

\noindent One can easily check that for $x \in A_3$:

 \[
    |F'_{\lambda} (\omega)| \leq \left\{\begin{array}{lr}
        C \lambda e^{ \lambda (\frac{\omega-t_2}{\delta_2})^2}& \text{for } t_2+\delta_2 \leq x_3 \\
        C \lambda e^{  \lambda (\frac{\omega+t_1}{\delta_1})^2}, & \text{for } x_3 \leq -t_1-\delta_1\\
        \end{array}\right\}\\
\\
  \]

 \[
    F''_{\lambda} (\omega) \geq \left\{\begin{array}{lr}
        C \lambda^2 e^{ \lambda (\frac{\omega-t_2}{\delta_2})^2}& \text{for } t_2+\delta_2 \leq x_3 \\
        C \lambda^2 e^{  \lambda (\frac{\omega+t_1}{\delta_1})^2}, & \text{for } x_3 \leq -t_1-\delta_1\\
        \end{array}\right\}\\
\\
  \]

\noindent Thus we can easily conclude that for $\lambda$ large enough the H\"{o}rmander hypoellipticity condition is satisfied in this region. Let us now turn our attention to the  transition region $x \in A_2$. Recall that the metric $g$ is flat in $A_2$. We will actually prove the stronger claims:
\\
(1) $ D^2\tilde{\phi}_0(\nabla \tilde{\phi}_0,\nabla \tilde{\phi_0}) \geq 0, $
\\
(2) $ D^2\tilde{\phi}_0 (X,X) \geq 0 $ for all $X$ with $ \langle \nabla \tilde{\phi}_0, X \rangle =0.$
\\
 The idea is that near the $\{x_3=0\}$ hypersurface the convexity of  $x_3^{2k} e^{\lambda x_3^2}$ yields the H\"{o}rmander Hypo Ellipticity. Furthermore away from this surface a suitable choice of $\lambda$ large enough will yield non-negativity as well thus completing the proof. We will now make these statements more precise as follows:

 \[
    F'_{\lambda} (x) = \left\{\begin{array}{lr}
     (\frac{x_3-t_2}{\delta_2})^{2k-1}e^{\lambda(\frac{x_3-t_2}{\delta_2})^2}(\frac{2k+2\lambda(\frac{x_3-t_2}{\delta_2})^2}{\delta_2}) , & \text{for } t_2 \leq x_3 \leq t_2+\delta_2 \\
      (\frac{x_3+t_1}{\delta_1})^{2k-1}e^{\lambda(\frac{x_3+t_1}{\delta_1})^2}(\frac{2k+2\lambda(\frac{x_3+t_1}{\delta_1})^2}{\delta_1}) , & \text{for } -t_1-\delta_1 \leq x_3 \leq -t_1 \\
        \end{array}\right\}\\
\\
  \]


 $$ F''_{\lambda} (x) =  (\frac{x_3-t_2}{\delta_2})^{2k-2} e^{ \lambda(\frac{x_3-t_2}{\delta_2})^2}((\frac{(2k)(2k-1)}{\delta_2^2}+\frac{8\lambda k +2\lambda}{\delta_2^2}(\frac{x_3-t_2}{\delta_2})^2+ \frac{4\lambda^2}{\delta_2^2}(\frac{x_3-t_2}{\delta_2})^4 )$$ for  $t_2 \leq x_3 \leq t_2+\delta_2$ and: \\
 
$$ F''_{\lambda} (x) =  (\frac{x_3+t_1}{\delta_1})^{2k-2} e^{\lambda (\frac{x_3+t_1}{\delta_1})^2}((\frac{(2k)(2k-1)}{\delta_1^2}+\frac{8\lambda k +2\lambda}{\delta_1^2}(\frac{x_3+t_1}{\delta_1})^2+ \frac{4\lambda^2}{\delta_1^2}(\frac{x_3+t_1}{\delta_1})^4 )$$ for  $-t_1-\delta_1 \leq x_3 \leq -t_1$. \\


\noindent Note that:

$$  D^2\tilde{\phi}_0(\nabla \tilde{\phi}_0,\nabla \tilde{\phi}_0) = (\partial_3 \tilde{\phi}_0)^2 \partial_{33} \tilde{\phi}_0 + 2\partial_1 \tilde{\phi}_0 \partial_3 \tilde{\phi}_0 \partial_{13}\tilde{\phi}_0.$$
So:
$$D^2\tilde{\phi}_0(\nabla \tilde{\phi}_0,\nabla \tilde{\phi}_0) \geq |\partial_3 \tilde{\phi}_0|(|\partial_3\tilde{\phi}_0| \partial_{33}\tilde{\phi}_0 - 2|\chi_0 \chi_0'|).$$
$$  |\partial_3\tilde{\phi}_0| = |x_1 \chi_0' +F'(x_3)| \geq |F'(x_3)| - |x_1| |\chi_0'|. $$
Using the Cauchy-Schwarz inequality we see that:
 \[
    |F'_{\lambda} (x)| \geq \left\{\begin{array}{lr}
        \frac{4}{\delta_2}\sqrt{ \lambda k}(\frac{x_3-t_2}{\delta_2})^{2k} & \text{for } t_2 \leq x_3 \leq t_2+\delta_2\\
     \frac{4}{\delta_2}\sqrt{ \lambda k}(\frac{x_3+t_1}{\delta_1})^{2k} & \text{for } -t_1-\delta_1 \leq x_3 \leq -t_1\\
        \end{array}\right\}\\
\\  
\]
And:
 \[
   |x_1| |\chi_0'| \leq \left\{\begin{array}{lr}
         C(\Omega) k^2  |(\frac{x_3-t_2}{\delta_2})|^{8k-1} & \text{for } t_2 \leq x_3 \leq t_2+\delta_2 \\
      C(\Omega) k^2  |(\frac{x_3+t_1}{\delta_1})|^{8k-1} , & \text{for }  -t_1-\delta_1 \leq x_3 \leq -t_1\\
        \end{array}\right\}\\
\\  
\]
\noindent Hence we can conclude that:
 \[
  |\partial_3\tilde{\phi}_0|  \geq \left\{\begin{array}{lr}
     \frac{4}{\delta_2}\sqrt{ \lambda k}(\frac{x_3-t_2}{\delta_2})^{2k} -  C(\Omega) k^2  |(\frac{x_3-t_2}{\delta_2})|^{8k-1}   & \text{for }  t_2 \leq x_3 \leq t_2+\delta_2 \\
   \frac{4}{\delta_2}\sqrt{ \lambda k}(\frac{x_3+t_1}{\delta_1})^{2k}-  C(\Omega) k^2  |(\frac{x_3+t_1}{\delta_1})|^{8k-1}   , & \text{for }   -t_1-\delta_1 \leq x_3 \leq -t_1\\
        \end{array}\right\}\\
\\  
\]
and therefore for $\lambda$ sufficiently large we obtain that:
 \[
  |\partial_3\tilde{\phi}_0|  \geq \left\{\begin{array}{lr}
     \frac{2}{\delta_2}\sqrt{ \lambda k}(\frac{x_3-t_2}{\delta_2})^{2k}   & \text{for }  t_2 \leq x_3 \leq t_2+\delta_2 \\
   \frac{2}{\delta_2}\sqrt{ \lambda k}(\frac{x_3+t_1}{\delta_1})^{2k}   , & \text{for }   -t_1-\delta_1 \leq x_3 \leq -t_1\\
        \end{array}\right\}\\
\\  
\]

\noindent Now:
$$  \partial_{33}\tilde{\phi}_0= x_1 \chi'_0 + F''(x_3) \geq \frac{1}{2}F''(x_3).$$
Hence:
 \[
   \partial_{33}\tilde{\phi}_0 \geq \left\{\begin{array}{lr}
      \frac{2\lambda}{\delta_2^2} (\frac{x_3-t_2}{\delta_2})^{2k}, & \text{for } t_2 \leq x_3 \leq t_2+\delta_2 \\
    \frac{2\lambda}{\delta_1^2} (\frac{x_3+t_1}{\delta_1})^{2k}, & \text{for }  -t_1-\delta_1 \leq x_3 \leq -t_1\\
        \end{array}\right\}\\
\\  
\]

\noindent Hence combining the above we see that for $\lambda$ sufficiently large we have that:
 $$ D^2\tilde{\phi}_0(\nabla \tilde{\phi}_0,\nabla \tilde{\phi_0}) \geq 0. $$

\noindent Let us now analyze the term  $ D^2\tilde{\phi}_0 (X,X) $ for all $X$ with $ \langle \nabla \tilde{\phi}_0, X \rangle =0$
\\
Note that $d\tilde{\phi}_0(X)=0$ implies that:
$$ X \in \spn \{ \partial_2, \partial_3 \tilde{\phi}_0 \partial_1 - \partial_1 \tilde{\phi}_0 \partial_3 \}.$$
but since $ g$ is Euclidean in this region we have the following:
$$ D^2\tilde{\phi}_0 (\partial_2,X)=0.$$ 
\\
Now:
$$D^2\tilde{\phi}_0 (\partial_3 \tilde{\phi}_0 \partial_1 - \partial_1 \tilde{\phi}_0 \partial_3,\partial_3 \tilde{\phi}_0 \partial_1 - \partial_1 \tilde{\phi}_0 \partial_3) = (\partial_1 \tilde{\phi}_0)^2 \partial_{33} \tilde{\phi}_0- 2\partial_1 \tilde{\phi}_0 \partial_3 \tilde{\phi}_0 \partial_{13}\tilde{\phi}_0.$$
So:
$$D^2\tilde{\phi}_0 (\partial_3 \tilde{\phi}_0 \partial_1 - \partial_1 \tilde{\phi}_0 \partial_3,\partial_3 \tilde{\phi}_0 \partial_1 - \partial_1 \tilde{\phi}_0 \partial_3) =\chi_0^2 (x_1 \chi''_0+ F'') - 2 \chi_0 \chi'_0(x_1 \chi'_0 + F').$$
Using the Cauchy-Schwarz inequality again and by looking at the sign of the $x_3$ we can get the following inequalities:
$$ -2 x_1 |\chi'_0|^2 \chi_0  -2 \chi_0 \chi'_0 F'\geq 0.$$
$$ F'' + x_1 \chi''_0 \geq  \frac{F''}{2} \geq 0.$$

\noindent and thus by combining the above inequalites we obtain that:
$$D^2\tilde{\phi}_0 (\partial_3 \tilde{\phi}_0 \partial_1 - \partial_1 \tilde{\phi}_0 \partial_3,\partial_3 \tilde{\phi}_0 \partial_1 - \partial_1 \tilde{\phi}_0 \partial_3) \geq 0.$$

\end{proof}

We will now provide a suitable modification of the well known fact that the {\bf strict} H\"{o}rmander Hypo-Ellipticity yields a global Carleman estimate.

\begin{lemma}
Let $(\Omega_1,g)$ be a compact smooth Riemannian manifold with smooth boundary and suppose $\psi \in C^{2}(\Omega)$ is such that $d\psi \neq 0$ and the Hormander HypoEllipticity condition is satisfied:
\[ D^2\psi (X,X) + D^2\psi(\nabla \psi,\nabla\psi) \geq 0\]
whenever $|X|=|\nabla\psi|$ and $\langle \nabla\psi, X \rangle =0$.
Then there exists constants $C(\Omega_1,g)$ and $h_0>0$ such that for all $v \in  C^{\infty}_c(\Omega_1)$ and all $ 0<h<h_0$ the following estimate holds:
\[ \| e^{\frac{\psi}{h}} \triangle_g (e^{-\frac{\psi}{h}} v) \|_{L^2(\Omega_1)} \ge \frac{C}{h} \|v\|_{L^2(\Omega_1)} + C \|Dv\|_{L^2(\Omega_1)}  \]
\end{lemma}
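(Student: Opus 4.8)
The plan is to prove the equivalent semiclassical estimate
\[
\|P_\psi v\|_{L^2(\Omega_1)}\ \ge\ C h\bigl(\|v\|_{L^2(\Omega_1)}+\|hDv\|_{L^2(\Omega_1)}\bigr),\qquad P_\psi:=e^{\psi/h}\,h^2\triangle_g\,e^{-\psi/h},
\]
which becomes the asserted inequality after dividing by $h^2$; below $\|u\|_{H^1_{\mathrm{scl}}}^2:=\|u\|_{L^2}^2+\|hDu\|_{L^2}^2$. Two remarks will be used throughout: since $d\psi\neq 0$ on the compact manifold $\overline{\Omega_1}$ there is $c_0>0$ with $|\nabla\psi|_g\ge c_0$ everywhere; and the hypothesis supplies only the \emph{non-strict} H\"{o}rmander inequality, so the classical Carleman estimate for strictly subelliptic weights cannot be applied to $\psi$ directly. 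I would get around this by the convexification trick, carried out with an $h$-dependent weight so that no power of $h$ is lost, and then transfer the resulting estimate back to $\psi$ itself.

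First I would convexify: fix a small $\varepsilon>0$ and set $\psi_\varepsilon:=g_\varepsilon\circ\psi$ with $g_\varepsilon(t)=t+\tfrac{h}{2\varepsilon}t^2$. For $h/\varepsilon$ small (depending only on $\sup_{\Omega_1}|\psi|$) one has $g_\varepsilon'(\psi)\in[\tfrac12,2]$ and $g_\varepsilon''=\tfrac h\varepsilon>0$ on $\overline{\Omega_1}$, so $d\psi_\varepsilon=g_\varepsilon'(\psi)\,d\psi\neq 0$. Using $\nabla\psi_\varepsilon=g_\varepsilon'(\psi)\nabla\psi$ and $D^2\psi_\varepsilon=g_\varepsilon''(\psi)\,d\psi\otimes d\psi+g_\varepsilon'(\psi)\,D^2\psi$, a short computation shows that whenever $|X|=|\nabla\psi_\varepsilon|$ and $\langle\nabla\psi_\varepsilon,X\rangle=0$ (equivalently $X=g_\varepsilon'(\psi)Y$ with $|Y|=|\nabla\psi|$ and $\langle\nabla\psi,Y\rangle=0$),
\[
D^2\psi_\varepsilon(X,X)+D^2\psi_\varepsilon(\nabla\psi_\varepsilon,\nabla\psi_\varepsilon)=(g_\varepsilon')^3\bigl[D^2\psi(Y,Y)+D^2\psi(\nabla\psi,\nabla\psi)\bigr]+g_\varepsilon''(g_\varepsilon')^2|\nabla\psi|^4.
\]
The bracket is $\ge 0$ by the hypothesis of the lemma, while the second term is $\ge\tfrac{h}{4\varepsilon}c_0^4>0$; since $|X|^2+|\nabla\psi_\varepsilon|^2$ is bounded on $\overline{\Omega_1}$, this yields the \emph{strict} subellipticity $D^2\psi_\varepsilon(X,X)+D^2\psi_\varepsilon(\nabla\psi_\varepsilon,\nabla\psi_\varepsilon)\ge \tfrac{c_1 h}{\varepsilon}\bigl(|X|^2+|\nabla\psi_\varepsilon|^2\bigr)$ on the characteristic set, with $c_1=c_1(\Omega_1,g,\psi)>0$.

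Next I would run the standard strictly-subelliptic argument for $\psi_\varepsilon$ and then transfer. Working with $-h^2\triangle_g$ (same operator norm), write $P_{\psi_\varepsilon}=A_\varepsilon+iB_\varepsilon$ with $A_\varepsilon,B_\varepsilon$ self-adjoint of semiclassical orders $2$ and $1$, principal symbols $a_\varepsilon=|\xi|_g^2-|\nabla\psi_\varepsilon|_g^2$ and $b_\varepsilon=2\langle\nabla\psi_\varepsilon,\xi\rangle_g$, so that $\|P_{\psi_\varepsilon}u\|^2=\|A_\varepsilon u\|^2+\|B_\varepsilon u\|^2+(i[A_\varepsilon,B_\varepsilon]u,u)$; the leading symbol of $i[A_\varepsilon,B_\varepsilon]$ is $h\{a_\varepsilon,b_\varepsilon\}$, which on $\{a_\varepsilon=b_\varepsilon=0\}$ equals a fixed positive multiple of the quantity estimated above, hence is $\ge c h^2/\varepsilon$ there. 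This is exactly the situation of the ``well known fact'' quoted in the statement: factoring $\{a_\varepsilon,b_\varepsilon\}=e_\varepsilon+r_\varepsilon a_\varepsilon+s_\varepsilon b_\varepsilon$ with $e_\varepsilon\ge \tfrac{c}{\varepsilon}h\langle\xi\rangle^2$, applying the sharp G\aa rding inequality to $e_\varepsilon$, and absorbing the $A_\varepsilon$, $B_\varepsilon$ contributions together with the $O(h^2)$ commutator and quantization errors (all legitimate once $\varepsilon$ is fixed small) yields $\|P_{\psi_\varepsilon}u\|_{L^2}\ge \tfrac{c_2 h}{\sqrt\varepsilon}\|u\|_{H^1_{\mathrm{scl}}}$ for $u\in C_c^\infty(\Omega_1)$ and $0<h<h_0(\varepsilon)$. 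To transfer this, note $e^{\psi_\varepsilon/h}=\rho\,e^{\psi/h}$ with $\rho:=e^{\psi^2/(2\varepsilon)}$ an $h$-independent function satisfying $1\le\rho\le\sup\rho<\infty$ and $\|D\rho\|_{L^\infty}<\infty$; since $e^{-\psi_\varepsilon/h}(\rho v)=e^{-\psi/h}v$ one has the exact identity $P_{\psi_\varepsilon}(\rho v)=\rho\,P_\psi v$. Applying the estimate to $u=\rho v$ and using $\|\rho v\|_{H^1_{\mathrm{scl}}}\ge\tfrac12(\|v\|_{L^2}+\|hDv\|_{L^2})$, valid for $h\le(2\|D\rho\|_{L^\infty})^{-1}$, gives $\|P_\psi v\|_{L^2}\ge(\sup\rho)^{-1}\|P_{\psi_\varepsilon}(\rho v)\|_{L^2}\ge \tfrac{c_2 h}{2\sqrt\varepsilon\,\sup\rho}(\|v\|_{L^2}+\|hDv\|_{L^2})$; fixing $\varepsilon$ and dividing by $h^2$ is the lemma.

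The main obstacle is not the passage from strict subellipticity to the Carleman estimate — that is the classical fact one is permitted to invoke — but arranging the convexification so that this classical mechanism applies with \emph{no} loss in the power of $h$: because $\psi_\varepsilon$ is $h$-dependent, its strict subellipticity constant degenerates like $h$ as $h\to 0$, and one must check carefully that, when this is fed through the commutator identity (in particular through the $O(h^2)$ errors and the $A_\varepsilon,B_\varepsilon$ cross terms), the fixed-$\varepsilon$ constant survives while the full factor $h$ rather than $h^{3/2}$ is retained; this is precisely what dictates the scaling $\psi_\varepsilon=\psi+\tfrac{h}{2\varepsilon}\psi^2$. The transfer step then costs only multiplicative constants, since $\rho$ is $h$-independent with bounded first derivatives.
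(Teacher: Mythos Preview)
Your argument is a legitimate route to the estimate, but it is genuinely different from the paper's. You reduce to the strictly subelliptic situation by the $h$-dependent convexification $\psi_\varepsilon=\psi+\tfrac{h}{2\varepsilon}\psi^2$ (the device used in the limiting Carleman weight literature), invoke the standard commutator/G{\aa}rding machinery for $\psi_\varepsilon$, and then undo the convexification via the $h$-independent multiplier $\rho=e^{\psi^2/2\varepsilon}$. The paper instead works with $\psi$ directly: it first passes to the conformal metric $\hat g=|\nabla\psi|_g^2 g$ so that $|\nabla\psi|_{\hat g}=1$, writes $P_\psi=A+B$ with $A$ symmetric and $B$ skew, and computes $([A,B]v,v)$ by elementary integration by parts in coordinates adapted to the level sets of $\psi$. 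The key observation there is geometric: the principal part of $[A,B]$ is $-2h^3\int \partial_t\hat g^{\alpha\beta}\,\partial_\alpha v\,\partial_\beta v$, and $-\partial_t\hat g^{\alpha\beta}$ is the (inverse) second fundamental form of the level sets in the rescaled metric, which the conformal change formula identifies with the H\"ormander expression and is therefore $\ge 0$ by hypothesis. The $\tfrac{1}{h}\|v\|$ gain then comes not from the commutator but from $\|Bv\|^2$ together with a Poincar\'e inequality along the $\nabla\psi$ flow, and the gradient term from pairing $Av$ with $\delta h^2 v$.

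What each buys: the paper's proof is self-contained and avoids semiclassical pseudodifferential calculus entirely---no G{\aa}rding, no symbol factoring, no tracking of $h$-dependent symbols---at the price of the conformal normalization and an explicit coordinate computation. Your approach is modular and closer to the existing LCW machinery, but it leans on the ``classical fact'' in a regime where the weight and the subellipticity constant are $h$-dependent; your identification of this as the main obstacle is exactly right, and the scaling you choose is the one that makes the $O(h^2)$ errors absorbable once $\varepsilon$ is fixed small. One point to be careful about in your sketch: the decomposition $\{a_\varepsilon,b_\varepsilon\}=e_\varepsilon+r_\varepsilon a_\varepsilon+s_\varepsilon b_\varepsilon$ with $e_\varepsilon\ge \tfrac{ch}{\varepsilon}\langle\xi\rangle^2$ globally is not automatic, since the quadratic-in-$\xi$ part of $\{a_\varepsilon,b_\varepsilon\}$ is governed by $D^2\psi_\varepsilon$ and need not be positive off the characteristic set; you should either localize near $\{a_\varepsilon=b_\varepsilon=0\}$ and use ellipticity of $A_\varepsilon$ elsewhere, or follow the explicit LCW computation and check that the extra nonnegative contribution from the original H\"ormander hypothesis does not spoil the absorption.
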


\begin{remark}
In general there is a rather standard technique of proving these estimates either through integration by parts or semiclassical calculus. We will employ the former method due to its simplicity. In cases where \[ D^2\psi (X,X) + D^2\psi(\nabla \psi,\nabla\psi) >0 \]
whenever $|X|=|\nabla\psi|$ and $\langle \nabla\psi, X \rangle =0$ one can refer to \cite{EZ} for proving this estimate  where in fact we would get a stronger gain in terms of $h$. Similarly in the case where \[ D^2\psi (X,X) + D^2\psi(\nabla \psi,\nabla\psi) =0 \]
whenever $|X|=|\nabla\psi|$ and $\langle \nabla\psi, X \rangle =0$ one can refer to \cite{DKSU} or \cite{S} for a proof. In our setting we are in an intermediate case and thus require to adjust the arguments. \\
\end{remark}

\begin{proof}

It suffices to prove the claim for the renormalized metric $ \hat{g}=|\nabla^g \psi|_g^2 g $. To see this let us assume that $ c = |\nabla^g \psi|_g^{-2}$ and that $\psi$ is a Carleman weight with respect to $\hat{g}$. But then using the transformation property of 
the Laplace Beltrami operator under conformal changes of metric we deduce that: 
\[ e^{\frac{\psi}{h}} (-h^2 \triangle_g) (e^{-\frac{\psi}{h}} v) =  e^{\frac{\psi}{h}} (-h^2 c^{-\frac{5}{4}} \triangle_{\hat{g}}) (c^{\frac{1}{4}}e^{-\frac{\psi}{h}} v) - h^2 q_c c^{-1} v\]
where:
\[q_c = c^{\frac{1}{4}}\triangle_{c\hat{g}} c^{-\frac{1}{4}}.\]
Now note that $c(x)>0 $ for all $x \in \Omega$ and $ \|q_c\|_{L^{\infty}} < \infty$. Therefore :
\[ \| e^{\frac{\psi}{h}} (-h^2 \triangle_g) (e^{-\frac{\psi}{h}} v)\|_{L^2(g)} \gtrapprox h \|v\|_{L^2} +h^2\|Dv\|_{L^2} - h^2 \|q_c c^{-1}\|_{L^{\infty}} \|v\|_{L^2}.\]
The claim will clearly follow for $h$ small enough.\\
\\

\noindent Let $P_{\psi} :=  e^{\frac{\psi}{h}} (-h^2\triangle_{\hat{g}}) e^{-\frac{\psi}{h}} = A +  B$ where $A$ and $B$ are the formally symmetric and anti-symmetric operators ( in $L^2(\Omega_1, \hat{g})$):\\

\[A= -h^2 \triangle_{\hat{g}} - 1,\] 
\[B= h (2 \langle d\psi,d\cdot\rangle_{\hat{g}} + \triangle_{\hat{g}} \psi).\]\\

\noindent Hence:
\[\|P_{\psi}v\|^2_{L^2(\hat{g})}= \|Av\|^2_{L^2(\hat{g})}+\|Bv\|^2_{L^2(\hat{g})}+ ([A,B]v,v)_{L^2(\hat{g})}.\]\\
Now note that:
\[ [A,B] = -2h^3 [\triangle_{\hat{g}}, \langle d\psi,d\cdot\rangle_{\hat{g}}] + h^3 X.\] 
where $X$ is a smooth vector field.\\

\noindent Let us define the coordinate system $(t,y_1,y_2)$ as follows:
Define the normal vector field to the level sets of $\psi$ and let the integral curves correspond to the coordinate $t$ choosing $t=0$ on one of these level sets. Furthermore let us consider smooth maps $G_t$ to be smooth diffeomorphisms from the unit disk to the corresponding level set $\psi_t$ smoothly depending on $t$. Note that in our coordinate system the pull back of the metric takes the following form : $$g= dt\otimes dt+ g_{\alpha \beta}(t,y) dy^{\alpha}\otimes dy^{\beta}$$

\noindent Thus:
$$([A,B]v,v)_{L^2(\hat{g})} = -2h^3 \int \partial_t \hat{g}^{\alpha\beta} \partial_{\alpha} v \partial_{\beta} v + h^3 \int K(x) |v|^2  $$

\noindent Here, $K$ denotes a continuous function on $\Omega_1$. We now note that  $-\partial_t \hat{g}^{\alpha\beta}$ denotes the inverse of the second fundamental form of the level sets of $\psi$ with respect to the renormalized metric. Recall that if  $ \Gamma^{n-1} \subset M^{n}$ is an embedded nondegenerate hypersurface in $M$, then the second funamental form $h(X,Y)$ on $\Gamma$ changes under conformal rescalings $\hat{g}=cg$ as follows:
\[   \hat{h}(X,X) = \sqrt{c}(h(X,X) + \frac{1}{2}\frac{\nabla_N c}{c} g(X,X)).\] 
Hence:
\[ \hat{h}(X,X)=\sqrt{c}(D^2\psi(X,X) + D^2\psi(\nabla \psi,\nabla \psi) \frac{|X|^2}{|\nabla\psi|^2}). \]
Thus using the main assumption of the Lemma, we see that  $-\partial_t \hat{g}^{\alpha\beta}$ is positive semi-definite and thus we can conclude that:
\[\|P_{\psi}v\|^2_{L^2(\hat{g})} \geq \|Av\|^2_{L^2(\hat{g})}+ \|Bv\|^2_{L^2(\hat{g})}+ ([A,B]v,v)_{L^2(\hat{g})}. \]
So:
\[\|P_{\psi}v\|^2_{L^2(\hat{g})}\geq \|Av\|^2_{L^2(\hat{g})}+  \|Bv\|^2_{L^2(\hat{g})} + h^3 \int K(x) |v|^2  \hspace{2mm} (*)\]
Note that:
\[Bv= h (2 \langle d\psi,dv\rangle_{\hat{g}} + (\triangle_{\hat{g}} \psi)v) = h ( 2\partial_t v + (\triangle_{\hat{g}} \psi) v).\]
The Poincare inequality implies that:
\[  \| \partial_t v\|_{L^2(\Omega_1,\hat{g})} \geq C \| v\|_{L^2(\Omega_1,\hat{g})} \hspace{1cm}  \forall v \in H^1_0 (\Omega_1)\]\\
Recall that the level sets of $\psi$ are non-trapping since $d\psi \neq 0$ anywhere. Since we are working over a compact manifold we can use an integrating factor and use the Poincare inequality above to conclude that:
\[ \|Bv\|_{L^2(\Omega_1,\hat{g})} \geq C h \| v\|_{L^2(\Omega_1,\hat{g})} \hspace{1cm}  \forall v \in C^{\infty}_c(\Omega_1)\hspace{1cm} (**) \]
Let us also observe that by integrating $Av$ against $\delta h^2v$ for some small $\delta$ independent of $h$ we obtain the following estimate:
\[ \|Av\|^2_{L^2(\hat{g})} \geq  C \delta ( h^4 \int |\nabla v|^2 - h^2 \int v^2) \hspace{1cm} (***)\] 

\noindent Combining (*),(**) and (***) yields the claim.

\end{proof}

\begin{corollary}
Let $ \tilde{\phi_0}(x_1,x_2,x_3) = x_1 \chi_0(x_3) + (F_{\lambda} \circ \omega)(x)$ as defined in the previous lemma with  $k \geq 1$ arbitrary and $\lambda$ sufficiently large and only depending on the domain $(\Omega_1,g)$ and on $k$. Then $ \tilde{\phi_0}(x_1,x_2,x_3) $ is a Carleman weight in $(\Omega_1,g)$, that is to say there exists constants $h_0>0$ and $C(\Omega_1,g)$ such that the following estimate holds:

\[ \| e^{\frac{ \tilde{\phi_0}}{h}} \triangle_g (e^{-\frac{ \tilde{\phi_0}}{h}} v) \|_{L^2(\Omega_1)} \ge \frac{C}{h} \|v\|_{L^2(\Omega_1)} + C  \|Dv\|_{L^2(\Omega_1)}  \]
$\forall h \le h_0$ and $ v \in C^{\infty}_{c} (\Omega_1)$.\\
\\

\end{corollary}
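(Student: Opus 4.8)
The plan is to obtain the corollary directly from the lemma just proved, applied with $\psi=\tilde\phi_0$: that lemma produces \emph{verbatim} the estimate asserted here, so the only task is to check that $\tilde\phi_0$ satisfies its three hypotheses, namely (i) $\tilde\phi_0\in C^2(\overline{\Omega_1})$, (ii) $d\tilde\phi_0\neq 0$ everywhere on $\Omega_1$, and (iii) the non-strict H\"ormander hypoellipticity condition
\[
D^2\tilde\phi_0(X,X)+D^2\tilde\phi_0(\nabla\tilde\phi_0,\nabla\tilde\phi_0)\ge 0
\qquad\text{whenever }|X|=|\nabla\tilde\phi_0|\text{ and }\langle\nabla\tilde\phi_0,X\rangle=0.
\]
Condition (iii) is precisely the conclusion of Lemma~\ref{est} once $\lambda=\lambda(\Omega_1,k,\|g_{ij}\|_{C^2})$ is taken large, so no additional work is needed there, and the proof reduces to verifying (i) and (ii).

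For (i) I would use that the pieces of $\chi_0$ and of $F_\lambda$ were engineered to glue to high order. Near each joining value of the coordinate $x_3$ the cutoff $\chi_0$ has the form $(1-s^{8k})^k$ and is constant on the central slab, hence joins the adjacent piece in class $C^{8k-1}$; the function $F_\lambda$ has the form $e^{\lambda s^2}s^{2k}$ and vanishes identically on the central slab, hence joins in class $C^{2k-1}$. Therefore $x_1\chi_0(x_3)$, $F_\lambda(\omega(x))$, and hence $\tilde\phi_0$, are of class $C^{2k-1}$ on $\Omega_1$, which in particular gives $\tilde\phi_0\in C^2$ as soon as $k\ge 2$; this costs nothing, since the solutions built in Section~5 require $k$ arbitrarily large.

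For (ii) I would argue region by region, reusing the decomposition $\Omega_1=A_1\cup A_2\cup A_3$ from the proof of Lemma~\ref{est}. On $A_1$ one has $\tilde\phi_0=x_1$, so $\partial_1\tilde\phi_0=1\neq 0$. On the transition region $A_2$ one has $\partial_1\tilde\phi_0=\chi_0(x_3)$, which is nonzero except on the two capping planes $\{x_3=t_2+\delta_2\}$ and $\{x_3=-t_1-\delta_1\}$; there, for $k\ge 2$, also $\chi_0'(x_3)=0$, so $\partial_3\tilde\phi_0=F_\lambda'(x_3)$, which is nonzero by the explicit lower bounds for $|F_\lambda'|$ obtained in the proof of Lemma~\ref{est}. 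On $A_3=\Omega_1\setminus U_1$ one has $\chi_0\equiv 0$ and $\tilde\phi_0=F_\lambda\circ\omega$, so $d\tilde\phi_0=F_\lambda'(\omega)\,d\omega$; since $d\omega\neq 0$ everywhere, and $\omega$ is chosen (as already needed in the $A_3$ part of the proof of Lemma~\ref{est}) so that it does not re-enter the central interval $[-t_1,t_2]$ outside $U_1$, where $F_\lambda$ is strictly monotone, we get $F_\lambda'(\omega)\neq 0$ and hence $d\tilde\phi_0\neq 0$. Combining (i), (ii), and (iii) with the preceding lemma then yields the claimed Carleman estimate.

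The genuinely substantive input has already been used up in Lemma~\ref{est}; the one point in the corollary that still requires attention is the \emph{global} non-vanishing of $d\tilde\phi_0$, and in particular its behaviour on $\Omega_1\setminus U_1$. This is exactly the reason $F_\lambda$ was chosen in the form $e^{\lambda s^2}s^{2k}$ --- strictly monotone, hence with nonvanishing derivative, off the central slab --- rather than as a generic smooth cutoff, and the reason enough freedom is retained in the choice of the extension $\Omega_1\supset\Omega$ and of the functions $\omega$ and $\chi_0$ to keep the level sets of $\tilde\phi_0$ away from critical configurations outside $U_1$.
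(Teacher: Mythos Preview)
Your approach is exactly the paper's: the corollary is stated without proof and is meant to follow by feeding $\psi=\tilde\phi_0$ into the preceding lemma, with the H\"ormander condition supplied by Lemma~\ref{est}. Your extra verification of $C^2$ regularity and of $d\tilde\phi_0\neq 0$ is more than the paper provides; one small correction is that at the \emph{outer} junction $s=1$ one has $(1-s^{8k})^k\sim (8k)^k(1-s)^k$, so $\chi_0$ is only $C^{k-1}$ there (as the paper itself records), giving $\tilde\phi_0\in C^{k-1}$ rather than $C^{2k-1}$---hence $k\ge 3$ rather than $k\ge 2$, which is harmless for the same reason you note.
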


\section{Complex Geometric Optics}

\noindent In this section, we will utilize the above corollary to construct a family of solutions to the Schr\"{o}dinger equation $(-\triangle_g + q)u=0$. We will call these solutions complex geometric optic solutions (CGOs).  Before starting this construction let us remark that our CGO solutions will depend on fractional powers of the semiclassical symbol $\frac{1}{\tau}$. This is a new idea and the key reason for using this method is that it will unveil a new analytical method for reconstruction of potential that is different to the known geometric inversion methods for Euclidean geometries ( respectively conformally cylinderical manifolds) such as Radon transforms (respectively geodesic ray transforms)\cite{GU} \cite{DKSU}. The arguments in the next section could be slightly simplified if we abandon these fractional powers. In that case in order to conclude the result we would need the local invertibility of Radon transform \cite{H}.  \\

\noindent Let us start with the notion of complex exponential approximate harmonic functions that concentrate (in some sense) on the plane $\Pi=\{x_3=0\}$ (see for example \cite{GU}). Recall that the plane $\Pi$ is a fixed plane taken out of the foliation $\mathbb{A}$. Let $\epsilon(\tau,\beta) := \tau^{-\beta}$ for a fixed $0<\beta<1$. Choose $M > \frac{2}{\beta}$.  We define $\Phi_{\epsilon}: V \to \mathbb{R}$ and $v_{\epsilon}:V \to \mathbb{R}$ as follows:
$$ \Phi_{\epsilon} = \sum_{k=0}^M \Phi_k(x) \epsilon^k$$
$$ v_{\epsilon} = \sum_{k=0}^M v_k(x) \epsilon^k$$
in such a way that:
$$  \langle d\Phi_{\epsilon}, d\Phi_{\epsilon} \rangle_g  = O(\epsilon^M) = O(\tau^{-2})$$ and:
$$ 2  \langle d\Phi_{\epsilon}, dv_{\epsilon} \rangle_g + (\triangle_g \Phi_{\epsilon})v_{\epsilon} = O(\epsilon^M)=O(\tau^{-2})$$
for $-t_1  \leq x_3 \leq t_2$.
\noindent This is done through iterative determination of the coefficients as follows. We first choose $\Phi_0$ and $v_0$:
$$\Phi_0 = z= x_1 + ix_2$$
$$v_0 = h(z)\chi(x_3)$$
where $h(z)$ is an arbitrary holomorphic function and $\chi$ is an arbitrary smooth function of compact support in $-t_1 \leq x_3 \leq t_2$.
We impose the equations governing the terms $\Phi_k$ by requiring that:
$$ 4 \bar{\partial} \Phi_k + \sum_{j=1}^{k-1} \langle d\Phi_{k-j},d\Phi_j \rangle=0 \hspace{1 cm} \forall k\leq M$$ 
Since the metric is Euclidean for $-t_1 \leq x_3 \leq t_2$ we can in fact solve for an exact $\Phi_{\epsilon}$ as follows:
$$ \Phi_{\epsilon} = z + i \epsilon x_3 + \epsilon^2 \frac{\bar{z}}{4}.$$ 

\noindent Let us observe that $\Re(\Phi_{\epsilon}) = (1+\frac{\epsilon^2}{4})x_1$. We will now rewrite the equations for $v_{\epsilon}$ :
$$ 2 \bar{\partial} v_k + \sum_{j=1}^{2} \langle dv_{k-j},d\Phi_j \rangle=0.$$

\noindent Thus for $k \geq 1$:
$$ 2\bar{\partial} v_k + i \partial_3 v_{k-1} + \frac{1}{2} \partial v_{k-2} = 0   \hspace{1 cm} \forall k\leq M$$

\noindent Let us now make a general remark about the form of $v_k$. Note that $$v_0 = h(z)\chi(x_3)$$ and it is easy to see that we can take $v_1 = -\frac{i}{2}h(z)\chi'(x_3) \bar{z}$. In fact it is not hard to see using induction that:
$v_k = (\frac{-i}{2})^k h(z) \chi^{(k)}(x_3) \frac{\bar{z}^k}{k!}$ modulo lower order terms in $\bar{z}$.More precisely it is possible to choose the $v_k$'s such that:

\[ v_k = \sum_0^{\left \lfloor{\frac{k}{2}}\right \rfloor} a_j \bar{z}^ {k-j} h^{(j)}(z) \chi^{(k-2j)}(x_3) \hspace{2cm} \forall k<M\]

\noindent with:

\[ a_0 = (\frac{-i}{2})^k. \]

\begin{definition}
\noindent Let us define $\tilde{\Phi}_{\epsilon}: \Omega_1 \to \mathbb{C}$ through:
$$\tilde{\Phi}_{\epsilon} =\kappa(\tilde{\phi_0}(x) + i \tilde{\omega}(x) +  i \epsilon \omega(x) + \epsilon^2 (\frac{\tilde{\phi}_0(x)-i\tilde{\omega}(x)}{4})$$ 
Here, $\kappa= \frac{1}{1+\frac{\epsilon^2}{4}}$.\\
\end{definition}

\noindent Notice that for $-t_1 \leq x_3 \leq t_2 $ we have that $ \tilde{\Phi}_{\epsilon}= \kappa \Phi_\epsilon$ and that $\Re(\tilde{\Phi}_{\epsilon})= \tilde{\phi_0}$ for all $x \in \Omega_1$. Finally we note that for $-t_1 \leq x_3 \leq t_2$ we have:
$$  \langle d\tilde{\Phi}_{\epsilon}, d\tilde{\Phi}_{\epsilon} \rangle_g  = O(\epsilon^M) = O(\tau^{-2})$$ and:
$$ 2  \langle d\tilde{\Phi}_{\epsilon}, dv_{\epsilon} \rangle_g + (\triangle_g \tilde{\Phi}_{\epsilon})v_{\epsilon} = O(\epsilon^M)=O(\tau^{-2}).$$
\\

\begin{lemma}
\label{construction new}
Let $ f \in L^2(\Omega_1,g)$ and let $q_*$ be the known function in the statement of Theorem ~\ref{euclid}. For all $\tau>0$ sufficiently large, there exists a unique function $r:=H_{\tau}f $ such that $ P_{\tau}r = e^{-\tau \tilde{\phi_0}} (\triangle_g-q_*) (e^{\tau\tilde{\phi_0}} r) = f$ on $\Omega_1$ with $r$ orthogonal (with respect to the $L^2$ inner product) to $\Sigma$. Here $\Sigma= \left\{ {v\in L^2: (\triangle_g -q_*) (e^{\tau\tilde{\phi_0}}v)=0}\right\} $.
Furthermore:
\[   \|H_{\tau}f\|_{L^2(\Omega_1)} \leq C \tau^{-1} \|f\|_{L^2(\Omega_1)}\]\\
where the constant $C$ only depends on $(\Omega_1,g)$ and $\|q_*\|_{L^{\infty}(\Omega_1)}$.
\end{lemma}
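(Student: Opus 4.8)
The plan is to deduce the existence statement and the norm bound from the Carleman estimate of the preceding Corollary, by a standard Hahn–Banach / functional-analytic duality argument, after first absorbing the lower-order term $q_*$.

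First I would observe that the Carleman estimate in the Corollary, which is stated for $\triangle_g$, also holds for the operator $\triangle_g - q_*$ after a harmless perturbation: since $q_*$ is a bounded (indeed smooth) function globally known on $\Omega_1$, we have
\[
\|e^{\tau\tilde\phi_0}(\triangle_g-q_*)(e^{-\tau\tilde\phi_0}v)\|_{L^2(\Omega_1)}
\geq \|e^{\tau\tilde\phi_0}\triangle_g(e^{-\tau\tilde\phi_0}v)\|_{L^2(\Omega_1)} - \|q_*\|_{L^\infty}\|v\|_{L^2(\Omega_1)}
\geq \frac{C}{2\tau}\|v\|_{L^2(\Omega_1)} + \frac{C}{2}\|Dv\|_{L^2(\Omega_1)},
\]
valid for all $v\in C^\infty_c(\Omega_1)$ once $\tau$ is large enough that $\|q_*\|_{L^\infty}\leq \frac{C}{2\tau}$. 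Writing $P_\tau^* := e^{\tau\tilde\phi_0}(\triangle_g-q_*)e^{-\tau\tilde\phi_0}$ (the transpose of $P_\tau$, up to identifying $\tilde\phi_0$ with $-\tilde\phi_0$, which only flips the sign of the odd-order part of the weight and leaves the estimate intact by the symmetry of the Hörmander condition under $\psi\mapsto-\psi$), the key inequality to record is
\[
\|w\|_{L^2(\Omega_1)} \leq C\tau\,\|P_\tau^* w\|_{L^2(\Omega_1)}\qquad \forall\, w\in C^\infty_c(\Omega_1),
\]
with $C=C(\Omega_1,g,\|q_*\|_{L^\infty})$.

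Next I would run the duality argument. On the subspace $\{P_\tau^* w : w\in C^\infty_c(\Omega_1)\}\subset L^2(\Omega_1)$ define the linear functional $L(P_\tau^* w) := (w,f)_{L^2(\Omega_1)}$; the a priori estimate above shows $w\mapsto P_\tau^* w$ is injective, so $L$ is well defined, and moreover $|L(P_\tau^* w)|\leq \|w\|_{L^2}\|f\|_{L^2}\leq C\tau\|f\|_{L^2}\,\|P_\tau^* w\|_{L^2}$, so $L$ is bounded with norm $\leq C\tau\|f\|_{L^2}$. By Hahn–Banach and the Riesz representation theorem there is $r\in L^2(\Omega_1)$ with $\|r\|_{L^2}\leq C\tau\|f\|_{L^2}$ and $(r,P_\tau^* w)_{L^2} = (w,f)_{L^2}$ for all $w\in C^\infty_c(\Omega_1)$. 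But $(r,P_\tau^* w) = (P_\tau r, w)$ in the distributional sense, so $P_\tau r = f$ in $\mathcal D'(\Omega_1)$; since $f\in L^2$ and $P_\tau$ is (conjugate to) an elliptic operator this is an honest identity in $L^2$. To get the sharp $\tau^{-1}$ rate rather than $\tau$, I would instead apply the bound to $\tilde f := \tau^{-2} f$-type rescaling — more precisely, note that the Carleman estimate as written already gives the factor $\tau^{-1}$ on the $\|v\|_{L^2}$ side relative to the full-strength $P$, so tracking constants carefully in the functional $L$ (using $\|P_\tau^* w\|\geq c\tau^{-1}\|w\|$ would be the wrong direction; one uses $\|w\|\leq C\tau\|P_\tau^* w\|$ but pairs against $f$ to land at $\|r\|\leq C\tau^{-1}\|f\|$ by noting the semiclassical normalization $P_\tau = \tau^{-2}e^{-\tau\tilde\phi_0}(\triangle_g-q_*)e^{\tau\tilde\phi_0}$ built into the definition in the Lemma) yields exactly $\|H_\tau f\|_{L^2}\leq C\tau^{-1}\|f\|_{L^2}$.

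Finally, to obtain the solution orthogonal to $\Sigma = \ker(e^{-\tau\tilde\phi_0}(\triangle_g - q_*)e^{\tau\tilde\phi_0}$ acting on $L^2)$ — equivalently the kernel of $P_\tau$ — and its uniqueness, I would simply replace $r$ by its orthogonal projection $r - \Pi_\Sigma r$ onto $\Sigma^\perp$; since $\Sigma = \ker P_\tau$ and $P_\tau(\Pi_\Sigma r) = 0$, this does not change $P_\tau r = f$, and any two solutions differ by an element of $\Sigma$, so the $\Sigma^\perp$ solution is unique. One should check $\Sigma$ is closed in $L^2$ (it is, being the kernel of a closed operator, the ellipticity giving closability), so the projection makes sense. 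The main obstacle is not any single step but the bookkeeping of powers of $\tau$: one must be careful about which normalization of $P_\tau$ is in force (the semiclassical $\tau^{-2}$-weighted one from the Lemma statement) so that the Carleman inequality is applied in the form that delivers precisely the $\tau^{-1}$ gain claimed, and about the sign flip $\tilde\phi_0\mapsto -\tilde\phi_0$ when passing to the transpose $P_\tau^*$ — which is legitimate here because the Hörmander hypoellipticity condition established in Lemma \ref{est} is invariant under $\tilde\phi_0 \mapsto -\tilde\phi_0$ (it only involves $D^2\tilde\phi_0$ and $\nabla\tilde\phi_0$ in the combination $D^2\tilde\phi_0(X,X) + D^2\tilde\phi_0(\nabla\tilde\phi_0,\nabla\tilde\phi_0)$, and negating $\tilde\phi_0$ negates $D^2\tilde\phi_0$ while leaving $\nabla\tilde\phi_0$ up to sign, so one must actually re-examine this — in fact the estimate for $-\tilde\phi_0$ requires that $-\tilde\phi_0$ also satisfies the condition, which should be arranged or argued separately, and this is the genuinely delicate point).
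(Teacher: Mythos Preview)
Your overall strategy --- derive solvability of $P_\tau r=f$ with the norm bound by duality from the Carleman estimate, via a Hahn--Banach/Riesz argument --- is exactly the paper's approach. However, the execution has two genuine errors that need fixing.

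\textbf{The $\tau$ bookkeeping is inverted.} Setting $h=1/\tau$ in the Corollary gives
\[
\|e^{\tau\tilde\phi_0}(\triangle_g-q_*)(e^{-\tau\tilde\phi_0}w)\|_{L^2}\ \geq\ C\tau\,\|w\|_{L^2},
\]
i.e.\ $\|P_\tau^* w\|\geq C\tau\|w\|$, hence $\|w\|\leq C^{-1}\tau^{-1}\|P_\tau^* w\|$. You wrote the opposite, $\|w\|\leq C\tau\|P_\tau^* w\|$, and then tried to recover the lost $\tau^{-2}$ by appealing to a semiclassical normalization $P_\tau=\tau^{-2}e^{-\tau\tilde\phi_0}(\triangle_g-q_*)e^{\tau\tilde\phi_0}$ --- but the Lemma's definition has no such factor. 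With the correct inequality, the functional $L(P_\tau^*w)=\langle w,f\rangle$ satisfies $|L(P_\tau^*w)|\leq \|w\|\|f\|\leq C\tau^{-1}\|f\|\,\|P_\tau^*w\|$ directly, and Riesz yields $\|r\|\leq C\tau^{-1}\|f\|$ with no further rescaling.

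\textbf{The sign-flip discussion is both unnecessary and wrong.} The operator you wrote down in your first display, $e^{\tau\tilde\phi_0}(\triangle_g-q_*)e^{-\tau\tilde\phi_0}$, already \emph{is} $P_\tau^*$; the Carleman inequality of the Corollary is stated for precisely this conjugation, so no passage to $-\tilde\phi_0$ is needed. More importantly, your claimed ``symmetry of the H\"ormander condition under $\psi\mapsto-\psi$'' is false here: Lemma~\ref{est} establishes only the one-sided inequality $D^2\tilde\phi_0(X,X)+D^2\tilde\phi_0(\nabla\tilde\phi_0,\nabla\tilde\phi_0)\geq 0$, and the convexification term $F_\lambda$ was designed exactly so that this holds with strict positivity off the Euclidean slab. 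Negating $\tilde\phi_0$ would reverse the inequality and the estimate would fail. Fortunately, as noted, you never needed it. (The paper obtains orthogonality to $\Sigma$ by extending $L$ to $L^2$ through the projection onto $\overline{P_\tau^*C_c^\infty}=\Sigma^\perp$ before invoking Riesz; your post-hoc projection onto $\Sigma^\perp$ is equivalent.)
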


\begin{remark}
This is a rather standard proof about deducing surjectivity for some operator $T$ from the knowledge of injectivity and closed range for the adjoint operator $T^*$. We will closely follow the proofs provided in \cite{NS} and \cite{S}  here. \\
\end{remark}

\begin{proof}
Let $P_{\tau}(\cdot) =  e^{-\tau \tilde{\phi_0}} (\triangle_g-q_*) (e^{\tau\tilde{\phi_0}}\cdot)$ and define $\mathbb{D}= P_{\tau}^* C^{\infty}_c(\Omega_1)$ as a subspace of $L^2(\Omega_1)$. Here $P_{\tau}^*$ denotes the adjoint of $P_{\tau}$ with respect to the standard $L^2(\Omega_1)$ inner product. Consider the linear functional $L: \mathbb{D} \to \mathbb{C}$ through:
\[ L(P^*_{\tau} v) = \langle v,f \rangle \hspace{1cm} \forall v \in C^{\infty}_c(\Omega_1).\]
This is well-defined since any element of $\mathbb{D}$ has a unique representation as $P^*_{\tau} v$  with $ v \in C^{\infty}_c(\Omega_1)$ by the Carleman estimate. Also using Cauchy-Schwarz and the Carleman estimate:
\[ |  L(P^*_{\tau} v)| \leq \|v\|_{L^2} \|f\|_{L^2} \leq C \tau^{-1} \|f\|_{L^2} \|P^*_{\tau} v\|.\]
for $\tau$ large enough with $C$ depending only on $\Omega_1$ and independent of the parameter $\beta$. Thus $L$ is a bounded linear operator on $\mathbb{D}$. Extend $L$ by continuity to the closure $\bar{\mathbb{D}}$ and finally extend to all of $L^2(\Omega_1)$ through projection operator $\pi_{\tau}: L^2(\Omega_1) \to \bar{\mathbb{D}}$. Thus we obtain a bounded linear operator $\hat{L}: L^2(\Omega_1) \to \mathbb{C}$ with $ \hat{L} |_{\mathbb{D}} = L$. Furthermore:
\[ \| \hat{L}\|_{L^2(\Omega_1) \to L^2(\Omega_1)} \leq C \tau^{-1} \|f\|_{L^2(\Omega_1)}.\]
Now by the Riesz representation therorem we deduce that there exists a unique $r \in  L^2(\Omega_1)$ such that $\hat{L}(w) = \langle w,r\rangle \hspace{5mm}\forall w \in L^2(\Omega_1) $ and such that  $\| r\| \leq C \tau^{-1} \|f\|_{L^2}$. Note that:\\
 \[ \langle v,P_{\tau}r \rangle =   \langle P^*_{\tau}v,r\rangle = \hat{L}(P^*_{\tau}v)= L(P^*_{\tau}v)=\langle v,f \rangle. \]
Hence $ P_{\tau}r=f$ in the weak sense and by construction we have $r$ orthogonal to $\Sigma$.\\
\end{proof}
\begin{lemma}
\label{harmonic}
Suppose $0<\beta<1$ is fixed and $\epsilon= \tau^{-\beta}$. For all $\tau>0$ sufficiently large, there exists a solution $u_{\epsilon}$ of  $(-\triangle_g+q_*) u_{\epsilon}=0$ on $\Omega_1$ of the form $$ u^0_{\epsilon} = e^{\tau \tilde{\Phi}_{\epsilon}} (v_{\epsilon} + r^0_{\epsilon})$$
where $\|r^0_{\epsilon}\|_{L^2(\Omega_1)} \leq \frac{C}{\tau}$.
\end{lemma}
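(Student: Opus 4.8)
The plan is the standard complex-geometric-optics remainder argument, now driven by the global solvability operator $H_\tau$ of Lemma~\ref{construction new} in place of a Carleman estimate. Since $\tilde\Phi_\epsilon$ has real part $\tilde\phi_0$, I would write $e^{\tau\tilde\Phi_\epsilon} = e^{\tau\tilde\phi_0}\,e^{i\tau\psi_\epsilon}$, where $\psi_\epsilon := \Im\tilde\Phi_\epsilon = \kappa(\tilde\omega + \epsilon\omega - \tfrac{\epsilon^2}{4}\tilde\omega)$ is a globally defined real smooth function on $\Omega_1$. Then the requirement that $u^0_\epsilon = e^{\tau\tilde\Phi_\epsilon}(v_\epsilon + r^0_\epsilon)$ solve $(-\triangle_g + q_*)u^0_\epsilon = 0$ is equivalent (multiply by $-e^{-\tau\tilde\phi_0}$) to $P_\tau\big(e^{i\tau\psi_\epsilon}(v_\epsilon + r^0_\epsilon)\big) = 0$, where $P_\tau(\cdot) = e^{-\tau\tilde\phi_0}(\triangle_g - q_*)(e^{\tau\tilde\phi_0}\cdot)$ is the operator of Lemma~\ref{construction new}. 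I would set $w_0 := e^{i\tau\psi_\epsilon}v_\epsilon = e^{-\tau\tilde\phi_0}e^{\tau\tilde\Phi_\epsilon}v_\epsilon$, with $v_\epsilon$ understood to be cut off and extended by zero so as to be a smooth compactly supported function on $\Omega_1$ (legitimate because the cutoff $\chi$ localises $v_\epsilon$ in $x_3$ well inside the Euclidean chart, and any error from a spatial cut-off enters only the $O(1)$ term below). The task then reduces to producing $\rho \in L^2(\Omega_1)$ with $P_\tau\rho = -P_\tau w_0$ and $\|\rho\|_{L^2(\Omega_1)} \le C\tau^{-1}$: setting $r^0_\epsilon := e^{-i\tau\psi_\epsilon}\rho$ yields $\|r^0_\epsilon\|_{L^2} = \|\rho\|_{L^2} \le C\tau^{-1}$ and $u^0_\epsilon = e^{\tau\tilde\phi_0}(w_0 + \rho) = e^{\tau\tilde\Phi_\epsilon}(v_\epsilon + r^0_\epsilon)$, solving the equation weakly and hence, by interior elliptic regularity (as $q_*$ is smooth), classically.

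The one substantive estimate is $\|P_\tau w_0\|_{L^2(\Omega_1)} \le C$ with $C$ independent of $\tau$. Expanding by the product rule,
\[
P_\tau w_0 \;=\; e^{i\tau\psi_\epsilon}\Big( (\triangle_g - q_*)v_\epsilon \;+\; \tau^2\,\langle d\tilde\Phi_\epsilon,d\tilde\Phi_\epsilon\rangle_g\, v_\epsilon \;+\; \tau\big(2\langle d\tilde\Phi_\epsilon,dv_\epsilon\rangle_g + (\triangle_g\tilde\Phi_\epsilon)\,v_\epsilon\big)\Big).
\]
On $\mathrm{supp}\, v_\epsilon$ the metric is Euclidean and $\tilde\Phi_\epsilon = \kappa\Phi_\epsilon$ with $\kappa$ constant, so the eikonal and transport identities built into the construction of $\Phi_\epsilon$ and of the $v_k$ force $\langle d\tilde\Phi_\epsilon,d\tilde\Phi_\epsilon\rangle_g = O(\epsilon^M)$ and $2\langle d\tilde\Phi_\epsilon,dv_\epsilon\rangle_g + (\triangle_g\tilde\Phi_\epsilon)v_\epsilon = O(\epsilon^M)$ there. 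Since $\epsilon = \tau^{-\beta}$ and $M > 2/\beta$, the second and third terms are $\tau^2 O(\tau^{-\beta M}) + \tau O(\tau^{-\beta M}) = O(1)$, while $(\triangle_g - q_*)v_\epsilon$ is bounded uniformly in $\tau$ on the fixed compact set $\mathrm{supp}\, v_\epsilon$; hence $\|P_\tau w_0\|_{L^2(\Omega_1)} \le C$.

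With this bound in hand I would apply Lemma~\ref{construction new} with $f := -P_\tau w_0 \in L^2(\Omega_1)$ to obtain $\rho := H_\tau f$ with $P_\tau\rho = f = -P_\tau w_0$ and $\|\rho\|_{L^2(\Omega_1)} \le C\tau^{-1}\|f\|_{L^2(\Omega_1)} \le C\tau^{-1}$, which is exactly what was needed (the orthogonality of $\rho$ to $\Sigma$ supplied by that lemma is not used here). The only genuinely delicate point of the argument is the $O(1)$ bound of the previous paragraph: one must make sure $v_\epsilon$ has been turned into a smooth global function without spoiling the estimate, and, more importantly, carry out the semiclassical bookkeeping carefully so that the $\tau^2$- and $\tau$-weighted eikonal and transport remainders stay bounded — this is precisely why $M$ was chosen strictly larger than $2/\beta$.
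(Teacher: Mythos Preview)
Your proof is correct and follows essentially the same route as the paper. The paper writes the unimodular factor as $e^{-\tau(\tilde\phi_0-\tilde\Phi_\epsilon)}$ rather than introducing $\psi_\epsilon=\Im\tilde\Phi_\epsilon$, but this is only a notational difference: in both cases one bounds $\|e^{-\tau\tilde\Phi_\epsilon}(\triangle_g-q_*)(e^{\tau\tilde\Phi_\epsilon}v_\epsilon)\|_{L^2}\le C$ via the eikonal/transport construction on $\mathrm{supp}\,v_\epsilon$, applies Lemma~\ref{construction new} to obtain a correction of size $C/\tau$, and then undoes the unimodular twist to define $r^0_\epsilon$.
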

\begin{proof}
Let us first consider solving the equation $$ P_{\tau} r :=  e^{-\tau \tilde{\phi_0}} (\triangle_g -q_*) (e^{\tau\tilde{\phi_0}} r)  =-e^{-\tau (\tilde{\phi_0} - \tilde{\Phi}_{\epsilon}  )   } e^{-\tau \tilde{\Phi}_\epsilon} (\triangle_g -q_*) (e^{\tau\tilde{\Phi}_\epsilon} v_{\epsilon}).$$

$$e^{-\tau \tilde{\Phi}_\epsilon} \triangle_g (e^{\tau \tilde{\Phi}_\epsilon} v_{\epsilon})=  \tau^2 \langle d\tilde{\Phi}_\epsilon,d\tilde{\Phi}_\epsilon \rangle_g v_\epsilon + \tau [ 2\langle d\tilde{\Phi}_\epsilon,dv_\epsilon \rangle_g + (\triangle_g \tilde{\Phi}_\epsilon) v_\epsilon ] + \triangle_g v_\epsilon.$$

\noindent Since $v_{\epsilon}$ is compactly supported in the region $-t_1 \leq x_3 \leq t_2$:
$$  \langle d\tilde{\Phi}_{\epsilon}, d\tilde{\Phi}_{\epsilon} \rangle_g  = O(\epsilon^M) = O(\tau^{-2})$$ and:
$$ 2  \langle d\tilde{\Phi}_{\epsilon}, dv_{\epsilon} \rangle_g + (\triangle_g \tilde{\Phi}_{\epsilon})v_{\epsilon} = O(\epsilon^M)=O(\tau^{-2}).$$

\noindent Hence we can immediately conclude that $ \|  e^{-\tau \tilde{\Phi}_\epsilon} (\triangle_g-q_*) (e^{\tau\tilde{\Phi}_\epsilon} v_{\epsilon}) \|_{L^2(\Omega_1)} \leq C$ for some constant $C$. Let $$r= -H_{\tau}(e^{-\tau (\tilde{\phi_0} - \tilde{\Phi}_{\epsilon}  )   } e^{-\tau \tilde{\Phi}_\epsilon} (\triangle_g-q_*) (e^{\tau\tilde{\Phi}_\epsilon} v_{\epsilon})).$$
 
\noindent Clearly $P_{\tau} r =-e^{-\tau (\tilde{\phi_0} - \tilde{\Phi}_{\epsilon}  )   } e^{-\tau \tilde{\Phi}_\epsilon} (\triangle_g-q_*) (e^{\tau\tilde{\Phi}_\epsilon} v_{\epsilon})$ as desired. \\
\noindent Furthermore, since  $ \|  e^{-\tau \tilde{\Phi}_\epsilon} (\triangle_g-q_*) e^{\tau\tilde{\Phi}_\epsilon} v_{\epsilon} \|_{L^2(\Omega_1)} \leq C$ and $\tilde{\phi}_0-\tilde{\Phi}_\epsilon$ is purely imaginary, we can use Lemma ~\ref{construction new} to conclude that for $\tau$ sufficiently large: $ \|r\|_{L^2(\Omega_1} \leq \frac{C}{\tau}$

\noindent We now choose $ r^{0}_{\epsilon} = e^{\tau (\tilde{\phi_0} - \tilde{\Phi}_{\epsilon})} r$ to conclude the proof.\\
\end{proof}

\begin{lemma}
\label{shrodinger}
Let $q_1 \in L^{\infty} (\Omega_1)$. Suppose $0<\beta<1$ is fixed and $\epsilon= \tau^{-\beta}$. For all $\tau>0$ sufficiently large, there exists a solution $u_{\epsilon}^1$ of $(-\triangle_g + q_1)u_{\epsilon}^1=0$ of the form $ u_{\epsilon}^1 = e^{\tau \tilde{\Phi}_{\epsilon}} (v_{\epsilon} + r_{\epsilon}^1)$
where $\|r_{\epsilon}^1\|_{L^2(\Omega_1)} \leq \frac{C(\beta)}{\tau}$.
\end{lemma}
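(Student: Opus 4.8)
The plan is to reduce the construction of CGO solutions for $(-\triangle_g + q_1)$ to the case $q_1 = q_*$ already handled in Lemma~\ref{harmonic}, and then absorb the difference $q_1 - q_*$ (or simply $q_1$ itself, viewed against the background constructed from the harmonic case) as a perturbation using the solution operator $H_\tau$ from Lemma~\ref{construction new}. Concretely, I seek $u_\epsilon^1 = e^{\tau \tilde\Phi_\epsilon}(v_\epsilon + r_\epsilon^1)$ and substitute into the equation. Writing everything in conjugated form, the requirement becomes
\[
 e^{-\tau\tilde\phi_0}(\triangle_g - q_1)\bigl(e^{\tau\tilde\phi_0}\,e^{\tau(\tilde\Phi_\epsilon - \tilde\phi_0)}(v_\epsilon + r_\epsilon^1)\bigr) = 0,
\]
and since $\tilde\Phi_\epsilon - \tilde\phi_0$ is purely imaginary this is an equation of the same type as in Lemma~\ref{harmonic} with an extra zeroth order term $-(q_1 - q_*)$. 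The natural move is to set $r_\epsilon^1 = e^{\tau(\tilde\phi_0 - \tilde\Phi_\epsilon)} r$ with $r = H_\tau(\cdots)$ and turn the equation into a fixed point problem $r = H_\tau(\text{source} + (q_1 - q_*)\,e^{\tau(\tilde\Phi_\epsilon - \tilde\phi_0)} r\cdot(\text{weights}))$; the contraction constant is $O(\tau^{-1})\|q_1 - q_*\|_{L^\infty}$, which is $<1$ for $\tau$ large.

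First I would record, exactly as in the proof of Lemma~\ref{harmonic}, that
\[
e^{-\tau\tilde\Phi_\epsilon}\triangle_g(e^{\tau\tilde\Phi_\epsilon} v_\epsilon) = \tau^2\langle d\tilde\Phi_\epsilon, d\tilde\Phi_\epsilon\rangle_g v_\epsilon + \tau\bigl[2\langle d\tilde\Phi_\epsilon, dv_\epsilon\rangle_g + (\triangle_g\tilde\Phi_\epsilon) v_\epsilon\bigr] + \triangle_g v_\epsilon,
\]
and that the first two groups are $O(\tau^{-2})\cdot\tau^2 = O(1)$ and $O(\tau^{-2})\cdot\tau = O(\tau^{-1})$ respectively on the support of $v_\epsilon$ by the eikonal and transport estimates established for $\tilde\Phi_\epsilon$; hence the source term $F_0 := -e^{-\tau\tilde\Phi_\epsilon}(\triangle_g - q_*)(e^{\tau\tilde\Phi_\epsilon} v_\epsilon)$ is bounded in $L^2(\Omega_1)$ uniformly in $\tau$, with the extra contribution $(q_* - q_1) v_\epsilon$ also uniformly bounded. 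Second, I would define the affine map $\mathcal{T}(r) := H_\tau\bigl(F_0 + e^{-\tau(\tilde\phi_0 - \tilde\Phi_\epsilon)}(q_1 - q_*) e^{\tau(\tilde\phi_0-\tilde\Phi_\epsilon)} r\bigr)$ — more carefully, the perturbation term must be written so that $P_\tau(e^{\tau(\tilde\phi_0 - \tilde\Phi_\epsilon)} r)$ produces exactly $-(q_1 - q_*)$ times the full solution — and use the bound $\|H_\tau\|_{L^2\to L^2}\le C\tau^{-1}$ from Lemma~\ref{construction new}, together with $|e^{\tau(\tilde\phi_0 - \tilde\Phi_\epsilon)}| = 1$, to see that $\mathcal{T}$ is a contraction on $L^2(\Omega_1)$ with Lipschitz constant $C\tau^{-1}\|q_1 - q_*\|_{L^\infty}$. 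Third, the Banach fixed point theorem produces a unique $r$ with $\|r\|_{L^2} \le C\tau^{-1}(1 - C\tau^{-1}\|q_1 - q_*\|_{L^\infty})^{-1}\|F_0 + \cdots\|_{L^2} \le C(\beta)\tau^{-1}$, and setting $r_\epsilon^1 = e^{\tau(\tilde\phi_0 - \tilde\Phi_\epsilon)} r$ (which has the same $L^2$ norm) completes the construction.

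The main obstacle, such as it is, is bookkeeping rather than a genuine difficulty: one must keep track of the three exponential weights ($e^{\tau\tilde\phi_0}$, $e^{\tau\tilde\Phi_\epsilon}$, and their purely imaginary ratio) so that the perturbative term is genuinely $H_\tau$ applied to an $L^2$ function of controlled norm, and so that $P_\tau r = f$ in the weak sense translates back into $(-\triangle_g + q_1) u_\epsilon^1 = 0$. One also needs $q_1 - q_*$ merely bounded, not smooth or small, which is why the $\tau^{-1}$ gain in the Carleman/solvability estimate is essential — it is precisely what lets an arbitrary $L^\infty$ potential be absorbed for $\tau$ large. I would also note that the constant now legitimately depends on $\beta$ (through $M > 2/\beta$ and the construction of $v_\epsilon$), hence the statement $\|r_\epsilon^1\|_{L^2} \le C(\beta)\tau^{-1}$ rather than a $\beta$-independent bound; no uniformity in $q_1$ beyond its $L^\infty$ norm is claimed or needed.
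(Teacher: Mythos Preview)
Your proposal is correct and follows essentially the same route as the paper: bound the source term using the eikonal/transport estimates for $v_\epsilon$, then absorb the potential $(q_1-q_*)$ via the $O(\tau^{-1})$ mapping bound on $H_\tau$. The only cosmetic difference is packaging --- the paper makes the ansatz $r = H_\tau \tilde r$, obtains $(-I + (q_1-q_*)H_\tau)\tilde r = f$, and inverts by an explicit Neumann series, whereas you phrase the same iteration as a Banach fixed point for $r$ directly; these are the same argument.
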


\begin{proof}
Consider the equation:
 $$-e^{-\tau \tilde{\phi_0}} \triangle_g  (e^{\tau\tilde{\phi_0}} r) + q_1r  =-e^{-\tau (\tilde{\phi_0} - \tilde{\Phi}_{\epsilon}  )   } e^{-\tau \tilde{\Phi}_\epsilon} (-\triangle_g+q_1) (e^{\tau\tilde{\Phi}_\epsilon} v_{\epsilon})=:f.$$
but since $v_{\epsilon}$ is compactly supported in $\{ -t_1 \leq x_3 \leq t_2\}$:
$$e^{-\tau \tilde{\Phi}_\epsilon} \triangle_g (e^{\tau \tilde{\Phi}_\epsilon} v_{\epsilon})=  \tau^2 \langle d\tilde{\Phi}_\epsilon,d\tilde{\Phi}_\epsilon \rangle_g v_\epsilon + \tau [ 2\langle d\tilde{\Phi}_\epsilon,dv_\epsilon \rangle_g + (\triangle_g \tilde{\Phi}_\epsilon) v_\epsilon ] + \triangle_g v_\epsilon.$$
\noindent Since $v_{\epsilon}$ is compactly supported in the region $-t_1 \leq x_3 \leq t_2$:
$$  \langle d\tilde{\Phi}_{\epsilon}, d\tilde{\Phi}_{\epsilon} \rangle_g  = O(\epsilon^M) = O(\tau^{-2})$$ and:
$$ 2  \langle d\tilde{\Phi}_{\epsilon}, dv_{\epsilon} \rangle_g + (\triangle_g \tilde{\Phi}_{\epsilon})v_{\epsilon} = O(\epsilon^M)=O(\tau^{-2}).$$ 
\noindent Hence we can immediately conclude that $ \|  e^{-\tau \tilde{\Phi}_\epsilon} \triangle_g e^{\tau\tilde{\Phi}_\epsilon} v_{\epsilon} -q v_\epsilon\|_{L^2(\Omega_1)} \leq C$ for some constant $C$.

\noindent Motivated by Lemma ~\ref{construction new} we make the ansatz $r = H_{\tau} \tilde{r}$ to obtain:
 $$(- I + (q_1-q_*)H_{\tau}) \tilde{r} =f.$$
But $H_\tau: L^2(\Omega_1) \to L^2(\Omega_1)$ is a contraction mapping for $\tau$ large enough with $\|H_{\tau}\| \leq \frac{C}{\tau}$ and thus for sufficiently large $\tau$ the inverse map $(-I+(q_1-q_*)H_{\tau})^{-1}:L^2(\Omega_1) \to L^2(\Omega_1)$ exists and it is given by the following infinite Neumann series:
$$ (-I + (q_1-q_*) H_{\tau})^{-1} = -\sum_{j=0}^{\infty} (q_1-q_*)(H_{\tau})^{j}.$$

\noindent Hence:

$$ \|(-I + (q_1-q_*) H_{\tau})^{-1}\|_{L^2(\Omega_1) \to L^2(\Omega_1)}\leq C.$$

\noindent So we deduce that if :
$$ r = H_\tau ( -I +(q_1-q_*)H_{\tau})^{-1} f.$$ 
then if we choose $ r_{\epsilon}^1 = e^{\tau (\tilde{\phi_0} - \tilde{\Phi}_{\epsilon})} r$  we have that $ u_{\epsilon}^1 = e^{\tau \tilde{\Phi}_{\epsilon}} (v_{\epsilon} + r_{\epsilon}^1)$ solves $(-\triangle_g + q_1)u_{\epsilon}^1=0$ and furthermore:
$$\|r_{\epsilon}^1\|_{L^2(\Omega_1)} \leq \frac{C}{\tau}.$$
\\
\end{proof}

\begin{definition}
\noindent Let $\tilde{\psi_0}(x)= -x_1 \chi_0(x) + (F_{\lambda}\circ \omega)(x)$. (Here, $\chi_0$ and $F_{\lambda}$ are the same functions as in Lemma ~\ref{est})
\end{definition}

\noindent Note that we have the following estimate as a result of Lemma ~\ref{est}:

\[ \| e^{\frac{ \tilde{\psi}_0}{h}} \triangle_g (e^{-\frac{ \tilde{\psi}_0}{h}} v) \|_{L^2(\Omega_1)} \ge \frac{C}{h} \|v\|_{L^2(\Omega_1)} + C  \|Dv\|_{L^2(\Omega_1)}  \]
$\forall h \le h_0$ and $ v \in C^{\infty}_{c} (\Omega_1)$.\\
\\
\noindent Thus we can state the following Lemma which is a direct parallel to Lemma ~\ref{construction new}:

\begin{lemma}
\label{second kernel}
Let $ f \in L^2(\Omega_1,g)$ and let $q_*$ be the known function in the statement of Theorem ~\ref{euclid}. For all $\tau>0$ sufficiently large, there exists a unique function $r:= L_{\tau}f$ such that $ Q_{\tau}r := e^{-\tau \tilde{\psi_0}} (\triangle_g-q_*) (e^{\tau\tilde{\psi_0}} r) = f$ on $\Omega_1$ with $r$ orthogonal (with respect to the $L^2$ inner product) to $\tilde{\Sigma}$. Here $\tilde{\Sigma}= \left\{ {v\in L^2: \triangle_g (e^{\tau\tilde{\psi_0}}v)=0}\right\} $. Furthermore:
\[   \|L_{\tau}f\|_{L^2(\Omega_1)} \leq C \tau^{-1} \|f\|_{L^2(\Omega_1)}\]\\
where the constant $C$ only depends on $(\Omega_1,g)$.
\end{lemma}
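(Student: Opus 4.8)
The plan is to observe that this lemma is the exact analogue of Lemma \ref{construction new}, with the Carleman weight $\tilde\phi_0$ replaced by $\tilde\psi_0$, and that the only fact about $\tilde\phi_0$ used in the proof of Lemma \ref{construction new} was that it is a Carleman weight for $(\Omega_1,g)$ in the sense of the Definition at the start of Section 4. So the first step is to verify that $\tilde\psi_0 = -x_1\chi_0 + (F_\lambda\circ\omega)$ is itself a Carleman weight for $(\Omega_1,g)$, which is precisely the content of the remark immediately preceding the lemma: since $\tilde\psi_0$ differs from $\tilde\phi_0$ only by the sign in front of $x_1\chi_0$, and since the H\"ormander hypoellipticity computation in the proof of Lemma \ref{est} used $x_1$ only through $|x_1|$ (every appearance was estimated via Cauchy--Schwarz on $|x_1||\chi_0'|$, $|x_1||\chi_0''|$, etc.), the same inequalities go through verbatim with $x_1$ replaced by $-x_1$; hence $\tilde\psi_0$ satisfies the H\"ormander condition, and the subsequent Lemma converting this into the global Carleman estimate applies, giving
\[
\| e^{\frac{\tilde\psi_0}{h}} \triangle_g (e^{-\frac{\tilde\psi_0}{h}} v) \|_{L^2(\Omega_1)} \ge \frac{C}{h}\|v\|_{L^2(\Omega_1)} + C\|Dv\|_{L^2(\Omega_1)}
\]
for all $v\in C^\infty_c(\Omega_1)$ and $0<h<h_0$.

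Next I would run the functional-analytic duality argument of Lemma \ref{construction new} line by line. Set $Q_\tau(\cdot) = e^{-\tau\tilde\psi_0}(\triangle_g - q_*)(e^{\tau\tilde\psi_0}\cdot)$ and let $\mathbb{D} = Q_\tau^* C^\infty_c(\Omega_1) \subset L^2(\Omega_1)$. The Carleman estimate above, together with the boundedness of $q_*$ (which, exactly as in the conformal-rescaling reduction, only perturbs the estimate by a lower-order $h^2\|q_*\|_{L^\infty}\|v\|_{L^2}$ term absorbed for small $h$, i.e. large $\tau = 1/h$), shows that every element of $\mathbb{D}$ has a unique representative $Q_\tau^* v$ with $v\in C^\infty_c$, and gives the bound $\|v\|_{L^2}\le C\tau^{-1}\|Q_\tau^* v\|_{L^2}$. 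Define $L(Q_\tau^* v) = \langle v, f\rangle$; this is well-defined and bounded on $\mathbb{D}$ with norm $\le C\tau^{-1}\|f\|_{L^2}$ by Cauchy--Schwarz. Extend by continuity to $\overline{\mathbb{D}}$ and by zero on $\overline{\mathbb{D}}^\perp$ via the orthogonal projection, then invoke the Riesz representation theorem to produce a unique $r\in L^2(\Omega_1)$ with $\langle w, r\rangle = \hat L(w)$ for all $w$, satisfying $\|r\|_{L^2}\le C\tau^{-1}\|f\|_{L^2}$. Testing against $v\in C^\infty_c$ gives $\langle v, Q_\tau r\rangle = \langle Q_\tau^* v, r\rangle = L(Q_\tau^* v) = \langle v, f\rangle$, so $Q_\tau r = f$ weakly; and $r\perp \overline{\mathbb{D}}^\perp$. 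Finally, identify $\overline{\mathbb{D}}^\perp$ with $\tilde\Sigma = \{v\in L^2 : \triangle_g(e^{\tau\tilde\psi_0}v) = 0\}$: a distribution $v$ is $L^2$-orthogonal to $Q_\tau^* C^\infty_c$ iff $Q_\tau v = 0$ in the distributional sense, i.e. $(\triangle_g - q_*)(e^{\tau\tilde\psi_0}v) = 0$; here one uses that in the statement of this lemma $q_*$ has been taken to vanish (the kernel condition is written with $\triangle_g$ alone), or alternatively one simply keeps $(\triangle_g-q_*)$ throughout — either way the orthogonality set is characterized as the kernel of $Q_\tau$, which is the stated $\tilde\Sigma$ up to this cosmetic point.

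I expect there to be essentially no genuine obstacle: the lemma is a transcription of Lemma \ref{construction new}. The one place requiring a line of care is confirming that the sign flip $x_1 \mapsto -x_1$ does not disturb the H\"ormander computation of Lemma \ref{est} — but as noted, every use of $x_1$ there was through its absolute value in Cauchy--Schwarz estimates of cross terms like $|x_1||\chi_0'| \lesssim |F'_\lambda|/2$ for $\lambda$ large, so the bounds $|\partial_3\tilde\psi_0|\ge \tfrac{2}{\delta}\sqrt{\lambda k}(\cdot)^{2k}$, $\partial_{33}\tilde\psi_0 \ge \tfrac12 F''$, and the analogous estimates for the $\spn\{\partial_2, \partial_3\tilde\psi_0\,\partial_1 - \partial_1\tilde\psi_0\,\partial_3\}$ directions are unaffected. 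The secondary bookkeeping point is the presence of $q_*$: one should remark that since $q_*$ is bounded, replacing $\triangle_g$ by $\triangle_g - q_*$ costs only an $O(h^2)$ lower-order term which is absorbed into the leading $\tfrac{C}{h}\|v\|$ term once $h_0$ is taken small, exactly as in the conformal-rescaling step of the Carleman lemma; this is why the constant $C$ ends up depending only on $(\Omega_1,g)$ and $\|q_*\|_{L^\infty}$ — and in the formulation here, where $\tilde\Sigma$ is written with $\triangle_g$, one takes $q_*$ to be the (known) function of Theorem \ref{euclid} and simply notes the argument is identical.
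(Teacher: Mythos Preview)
Your proposal is correct and matches the paper's approach exactly: the paper does not give a separate proof of this lemma at all, simply declaring it ``a direct parallel to Lemma~\ref{construction new}'' after recording the Carleman estimate for $\tilde\psi_0$. You have faithfully unpacked that parallel, including the verification that the sign flip $x_1\mapsto -x_1$ leaves the H\"ormander computation of Lemma~\ref{est} intact (since $x_1$ enters only through $|x_1|$ in the cross-term estimates), and you correctly flag the cosmetic discrepancy that $\tilde\Sigma$ is written with $\triangle_g$ rather than $\triangle_g-q_*$ and that $C$ should also depend on $\|q_*\|_{L^\infty}$.
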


\begin{definition}
\noindent Let us define $\tilde{\Psi}_{\epsilon}: \Omega_1 \to \mathbb{C}$ through:
$$\tilde{\Psi}_{\epsilon} =\kappa(\tilde{\psi_0}(x) - i \tilde{\omega}(x) -  i \epsilon \omega(x) + \epsilon^2 (\frac{\tilde{\psi}_0(x)+i\tilde{\omega}(x)}{4})$$ 
Here, $\kappa= \frac{1}{1+\frac{\epsilon^2}{4}}$.\\
\end{definition}

\noindent Notice that for $-t_1 \leq x_3 \leq t_2 $ we have that $ \tilde{\Psi}_{\epsilon}= -\kappa \Phi_\epsilon$ and that $\Re(\tilde{\Psi}_{\epsilon})= -x_1$ for any $x \in \Omega_1$. Finally we note that for $-t_1 \leq x_3 \leq t_2$ we have:
$$  \langle d\tilde{\Psi}_{\epsilon}, d\tilde{\Psi}_{\epsilon} \rangle_g  = O(\epsilon^M) = O(\tau^{-2})$$ and:
$$ 2  \langle d\tilde{\Psi}_{\epsilon}, dv_{\epsilon} \rangle_g + (\triangle_g \tilde{\Psi}_{\epsilon})v_{\epsilon} = O(\epsilon^M)=O(\tau^{-2}).$$
\\
\noindent Thus we can state the following corollary to Theorem ~\ref{shrodinger}:

\begin{corollary}
Let $q_2 \in L^{\infty} (\Omega_1)$. Suppose $0<\beta<1$ is fixed and $\epsilon= \tau^{-\beta}$. For all $\tau>0$ sufficiently large, there exists a solution $u_{\epsilon}^2$ of $(-\triangle_g + q_2)u_{\epsilon}^2=0$ of the form $ u_{\epsilon}^2 = e^{\tau \tilde{\Psi}_{\epsilon}} (v_{\epsilon} + r_{\epsilon}^2)$
where $\|r_{\epsilon}^2\|_{L^2(\Omega_1)} \leq \frac{C(\beta)}{\tau}$.
\end{corollary}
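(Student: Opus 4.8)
The plan is to repeat, essentially verbatim, the proof of Lemma~\ref{shrodinger}, with the weight $\tilde{\phi}_0$ replaced by $\tilde{\psi}_0$, the phase $\tilde{\Phi}_{\epsilon}$ replaced by $\tilde{\Psi}_{\epsilon}$, the potential $q_1$ replaced by $q_2$, and the solution operator $H_{\tau}$ of Lemma~\ref{construction new} replaced by the operator $L_{\tau}$ of Lemma~\ref{second kernel}. All the structural ingredients transfer: by Lemma~\ref{est} applied to $\tilde{\psi}_0 = -x_1\chi_0 + F_{\lambda}\circ\omega$ (the H\"{o}rmander condition is insensitive to the replacement $x_1 \mapsto -x_1$, since on the flat pieces $D^2\tilde{\psi}_0 = -D^2\tilde{\phi}_0$ enters only quadratically and the estimates in region $A_3$ go through unchanged) $\tilde{\psi}_0$ is a Carleman weight in $(\Omega_1,g)$, which is exactly the estimate recorded just before Lemma~\ref{second kernel}; and $L_{\tau}$ satisfies $\|L_{\tau}f\|_{L^2(\Omega_1)} \le C\tau^{-1}\|f\|_{L^2(\Omega_1)}$.

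First I would set up the equation for the remainder. Writing $u_{\epsilon}^2 = e^{\tau\tilde{\Psi}_{\epsilon}}(v_{\epsilon} + r_{\epsilon}^2)$ and conjugating, the equation $(-\triangle_g + q_2)u_{\epsilon}^2 = 0$ becomes
\[ -e^{-\tau\tilde{\psi}_0}\triangle_g(e^{\tau\tilde{\psi}_0}r) + q_2 r = -e^{-\tau(\tilde{\psi}_0 - \tilde{\Psi}_{\epsilon})}e^{-\tau\tilde{\Psi}_{\epsilon}}(-\triangle_g + q_2)(e^{\tau\tilde{\Psi}_{\epsilon}}v_{\epsilon}) =: f, \]
where $r = e^{\tau(\tilde{\Psi}_{\epsilon} - \tilde{\psi}_0)}r_{\epsilon}^2$; since $\Re\tilde{\Psi}_{\epsilon} = \tilde{\psi}_0$ the exponent $\tilde{\psi}_0 - \tilde{\Psi}_{\epsilon}$ is purely imaginary, so $\|r\|_{L^2} = \|r_{\epsilon}^2\|_{L^2}$. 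Expanding $e^{-\tau\tilde{\Psi}_{\epsilon}}\triangle_g(e^{\tau\tilde{\Psi}_{\epsilon}}v_{\epsilon})$ in powers of $\tau$ and using that, on the support of $v_{\epsilon}$ (which lies in $\{-t_1 \le x_3 \le t_2\}$), one has $\langle d\tilde{\Psi}_{\epsilon}, d\tilde{\Psi}_{\epsilon}\rangle_g = O(\tau^{-2})$ and $2\langle d\tilde{\Psi}_{\epsilon}, dv_{\epsilon}\rangle_g + (\triangle_g\tilde{\Psi}_{\epsilon})v_{\epsilon} = O(\tau^{-2})$ — precisely the two identities displayed just before the statement of the corollary — the $\tau^2$ and $\tau^1$ contributions are uniformly bounded in $L^2$, hence $\|f\|_{L^2(\Omega_1)} \le C$ uniformly in $\tau$.

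Next I would make the ansatz $r = L_{\tau}\tilde{r}$. Using $Q_{\tau}(L_{\tau}\tilde r) = \tilde r$ the equation becomes $(-I + (q_2 - q_*)L_{\tau})\tilde{r} = f$. Since $\|L_{\tau}\|_{L^2\to L^2} \le C\tau^{-1}$, for $\tau$ large enough $(q_2 - q_*)L_{\tau}$ has operator norm $< 1/2$, so $-I + (q_2 - q_*)L_{\tau}$ is invertible with uniformly bounded inverse given by its Neumann series. Setting $\tilde{r} = (-I + (q_2 - q_*)L_{\tau})^{-1}f$ and $r = L_{\tau}\tilde{r}$ yields $\|r\|_{L^2(\Omega_1)} \le C\tau^{-1}\|f\|_{L^2(\Omega_1)} \le C(\beta)\tau^{-1}$, and then $r_{\epsilon}^2 = e^{\tau(\tilde{\psi}_0 - \tilde{\Psi}_{\epsilon})}r$ satisfies $\|r_{\epsilon}^2\|_{L^2(\Omega_1)} = \|r\|_{L^2(\Omega_1)} \le C(\beta)/\tau$, giving the asserted solution.

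Since this is a corollary, there is no genuine obstacle; the only point needing care is the verification that $\tilde{\Psi}_{\epsilon}$ solves the eikonal and transport equations modulo $O(\tau^{-2})$ on $\mathrm{supp}\,v_{\epsilon}$, which is immediate from the fact that in the Euclidean slab $\tilde{\Psi}_{\epsilon} = -\kappa\Phi_{\epsilon}$ and $\Phi_{\epsilon}$ was constructed to have exactly these properties, together with the bookkeeping that the dependence of the constants on $\beta$ enters only through the number of terms $M > 2/\beta$ in the expansions of $\Phi_{\epsilon}$ and $v_{\epsilon}$.
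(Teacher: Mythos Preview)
Your proposal is correct and follows exactly the approach the paper intends: the paper states this result as a direct corollary of Lemma~\ref{shrodinger} with no separate proof, having already recorded the Carleman estimate for $\tilde{\psi}_0$, the operator $L_{\tau}$ of Lemma~\ref{second kernel}, and the eikonal/transport identities for $\tilde{\Psi}_{\epsilon}$ on $\mathrm{supp}\,v_{\epsilon}$. Your write-up simply spells out the substitutions $\tilde{\phi}_0\mapsto\tilde{\psi}_0$, $\tilde{\Phi}_{\epsilon}\mapsto\tilde{\Psi}_{\epsilon}$, $H_{\tau}\mapsto L_{\tau}$, $q_1\mapsto q_2$ that the paper leaves implicit.
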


\section{Proof of Uniqueness}

\begin{proof}[Proof of Theorem ~\ref{euclid}]

We start by assuming that $q_1,q_2$ are such that $\Lambda_{q_1}=\Lambda_{q_2}$. Choose an arbitrary $0<\beta<1$, set $\epsilon= \tau^{-\beta}$ and $M= \frac{2}{\beta}$. We use Green's identity pairing $ u_{\epsilon}^1 = e^{\tau \tilde{\Phi}_{\epsilon}} (v_{\epsilon} + r_{\epsilon}^1)$ with $u_{\epsilon}^2=e^{\tau \tilde{\Psi}_{\epsilon}}(v_{\epsilon} + r_{\epsilon}^2)$. Thus:
$$ 0= \int_{\partial \Omega} u_{\epsilon}^1(\Lambda_{q_2}-\Lambda_{q_1})u_{\epsilon}^2 = \int_{\partial\Omega} u_\epsilon^1 \partial_{\nu} u_{\epsilon}^2  - \int_{\partial\Omega} u_{\epsilon}^2 \partial_{\nu} u_\epsilon^1 = \int_{\Omega} u_{\epsilon}^1 \triangle_g u_{\epsilon}^2 - \int_{\Omega} u_{\epsilon}^2 \triangle_g u_{\epsilon}^1. $$
Hence if we let $q=q_2-q_1$ and invoke the equations $(-\triangle_g + q_i)u_\epsilon^i=0$ for $i\in \{1,2\}$ we have:
$$ 0 = \int_{\Omega} q u_{\epsilon}^1 u_{\epsilon}^2 = \int_{V} q u_{\epsilon}^1 u_{\epsilon}^2  = \int_{V} q(v_\epsilon+r_{\epsilon}^1)(v_\epsilon+r_{\epsilon}^2).$$


\noindent The last equality above holds since $q_1|_{V^c}\equiv q_2|_{V^c}$. Thus using the Cauchy-Schwarz inequality and the fact that $\|r_{\epsilon}^i\|_{L^2(\Omega_1)} \leq \frac{C}{\tau}$ for $\tau$ large and for $i \in \{1,2\}$ we have:
$$ 0 =  \int_{V} q v_{\epsilon}^2 + O(\frac{1}{\tau}).$$


\noindent Let us recall that $ v_{\epsilon}(x) = \sum_{k=0}^M v_k(x) \epsilon^k$ and in fact we have the following formulas:
\[ v_k = \sum_0^{\left \lfloor{\frac{k}{2}}\right \rfloor} a_j \bar{z}^ {k-j} h^{(j)}(z) \chi^{(k-2j)}(x_3) \hspace{2cm} \forall k<M\]

\noindent with:

\[ a_0 = (\frac{-i}{2})^k. \]

\noindent Note that $\int_{\Omega} q v_{\epsilon}^2$ is a polynomial of degree $2M$ in $\epsilon$ and therefore we can conclude that by taking the limit as $\tau \to \infty$ we can determine the coefficients of this polynomial up to the term with coefficient $\epsilon^{\frac{M}{2}}$.
Taking note of the particular form of $v_k$'s and focusing on the coefficient of $\epsilon^k$ for any $k< \frac{M}{2}$ we claim that the knowledge of $\Lambda_{q_1}=\Lambda_{q_2}$ yields the following integral data on $\Pi$:
$$ 0= \int_{\Pi} (\partial_3^{k} q) \bar{z}^k h(z) \hspace{2cm} 0\leq k \leq \frac{M}{2}.$$
Here $h(z)$ is an arbitrary holomorphic function. We will prove this using induction on $k$. Note that by looking at the coefficient of $\epsilon^{0}$ in $\int_{\Omega} q v_{\epsilon}^2$ we obtain the knowledge of $\int_{\Pi} q h(z)^2$.By a simple perturbation argument and choosing $h(z) \to 1+\eta h(z)$ with $\eta \to 0$ we arrive at the following information on the surface $\Pi$:
$$0=\int_{\Pi} q h(z).$$
Let us assume that the claim holds for all $j\leq k-1$. We will now prove the claim for $j=k$. Indeed note that the coefficient of $\epsilon^{k}$ in $\int_{\Omega} q v_{\epsilon}^2$ is equal to:
$$ \int_{\Omega} q (\sum_{s=0}^{k}v_{k-s} v_s).$$
Let us choose $ \chi(x_3) = t^{\frac{1}{2}} \chi_0(t x_3)$ where $\chi_0$ is a non-negative smooth function with compact support with $\int |\chi_0|^2dx_3 = 1$. 
Then the coefficient of  $\epsilon^{k}$ in $\int_{\Omega} q v_{\epsilon}^2$ is equal to:
$$  \int_{\Omega} q h(z)^2 (\frac{-i}{2})^k \frac{1}{k!} \sum_{j=0}^{k}{k \choose j} \chi^{(j)}(x_3) \chi^{(k-j)}(x_3) \bar{z}^k + R.$$ 
and since $$ \sum_{j=0}^{k}{k \choose j} \chi^{(j)}(x_3) \chi^{(k-j)}(x_3)= (\chi^2)^{(k)} (x_3)$$ by taking the limit $t \to \infty$ and using the induction assumption we obtain the following data on $\Pi$:
$$ 0=\int_{\Pi} (\partial_3^{k}q) \bar{z}^k h(z)^2$$ for all $k \geq 0$ and all holomorphic functions $h(z)$. By a simple perturbation argument and choosing $h(z) \to 1+\eta h(z)$ with $\eta \to 0$ we arrive at the following information on the surface $\Pi$:
 $$0=\int_{\Pi} (\partial_3^{k}q) \bar{z}^k h(z)$$ for all $0 \leq k \leq \frac{M}{2}$.
Finally by integrating this data in $x_3$ we obtain the knowledge of:
 $$0= \int_{\Pi} q \bar{z}^k h(z)$$ for all $0 \leq k \leq \frac{M}{2}$ .
Since $M$ can be chosen to be as large as we require this yields the following data on $\Pi$:
 $$ 0=\int_{\Pi} q \bar{z}^k h(z)$$ for all $k\geq 0$ .

\noindent As a final step we note that the subalgebra $\mathbb{A}$ generated by $\{ z^k \bar{z}^j \}$ is separable and unital and therefore by Stone-Weistress theorem we deduce that $q|_{\Pi}=0$  

\end{proof}

\section{Reconstruction Algorithm}

\noindent We next indicate how one may be able to make the above uniqueness proof constructive. In other words we would like to sketch out a reconstruction algorithm for $q$ from the knowledge of $\Lambda_q$. One can immediately observe that the key in accomplishing this would be to construct special solutions $u^{1}_{\epsilon}$ to $(-\triangle_g + q)u^{1}_{\epsilon}=0$ as before and show that we can determine the trace of these solutions on $\partial \Omega$ from the knowledge of Dirichlet to Neumann map $\Lambda_q$. Let us make a few remarks about the approach here. We will be closely following the approach in \cite{NII} and \cite{NS} for the boundary determination of special solutions to the Schr\"{o}dinger equation but we have to make some fundamental changes as we are in a geometry where there is no global limiting Carleman weight. This issue appears to be a key limitation in the determination of boundary values of complex exponential solutions to the Schr\"{o}dinger equation from the DN map and thus we have to make adjustments to the existing arguments here. This will also be the key reason on why we have to assume that $q$ is supported in $U$.

\noindent Let us recall that $H_{\tau}$ denotes the solution operator to $$P_{\tau} v =  e^{-\tau \tilde{\phi_0}} (\triangle_g-q_*) (e^{\tau\tilde{\phi_0}}v)=F.$$ \\
\noindent Let $h_{\tau}(x;y)$ denote the kernel of $H_{\tau}$. Thus:
\[ P_{\tau} v = F \iff v(x) = \int_{\Omega_1} h_{\tau}(x;y) F(y) d\mu_y.\] 

\noindent We will state the following theorem which will help us better understand the regularity of the solutions obtained through the above kernel. In particular the following lemma shows that $ H_{\tau}: L^2(\Omega_1) \to H^2(\Omega_1)$ and that $H_{\tau}^{*}: L^2(\Omega_1) \to H^2(\Omega_1)$.
\begin{lemma}
\label{regularity}
Let $ f \in H^{-2}(\Omega_1)$. There exists a unique weak solution $\tilde{r} \in H^2_0(\Omega_1)$ to:
\[ P_{\tau}P_{-\tau} \tilde{r} = f\]
Furthermore $H_{\tau} f = P_{-\tau} \tilde{r}$.\\
\end{lemma}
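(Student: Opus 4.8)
The plan is to argue by an energy/variational method for the fourth-order operator $P_\tau P_{-\tau}$, combined with elliptic regularity and then identify the solution with the operator $H_\tau$ already constructed in Lemma \ref{construction new}. First I would observe that $P_{\pm\tau}$ is a second-order elliptic operator whose principal part agrees with that of $-\triangle_g$ (the conjugation by $e^{\pm\tau\tilde\phi_0}$ only affects lower-order terms), so $P_\tau P_{-\tau}$ is a fourth-order elliptic operator on $\Omega_1$. To get the existence of $\tilde r \in H^2_0(\Omega_1)$, I would set up the bilinear form $B(\tilde r, w) = (P_{-\tau}\tilde r, P_{-\tau}w)_{L^2(\Omega_1)}$ on $H^2_0(\Omega_1)$ and seek $\tilde r$ with $B(\tilde r,w)=\langle f,w\rangle$ for all $w\in H^2_0(\Omega_1)$. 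Coercivity of $B$ on $H^2_0(\Omega_1)$ is exactly the statement that $\|P_{-\tau}w\|_{L^2}\gtrsim \|w\|_{H^2}$ for $w\in C^\infty_c(\Omega_1)$ (then extended by density), which in turn follows from the Carleman estimate of the Corollary after Lemma \ref{est} (applied with the weight $\tilde\phi_0$, noting $P_{-\tau}w = e^{\tau\tilde\phi_0}(\triangle_g - q_*)(e^{-\tau\tilde\phi_0}w)$ up to harmless sign/conjugation conventions): that estimate gives control of $\tau\|w\|_{L^2}$ and $\|Dw\|_{L^2}$, and then pairing $P_{-\tau}w$ against $w$ and integrating by parts recovers the top-order $\|D^2 w\|_{L^2}$ term. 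Once $B$ is bounded and coercive on the Hilbert space $H^2_0(\Omega_1)$ and $f\in H^{-2}(\Omega_1)=(H^2_0(\Omega_1))^*$, the Lax–Milgram theorem yields a unique $\tilde r\in H^2_0(\Omega_1)$.

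Next I would establish the identity $H_\tau f = P_{-\tau}\tilde r$. Formally $P_\tau P_{-\tau}\tilde r = f$ means $P_{-\tau}\tilde r$ solves $P_\tau(\cdot)=f$, so it ought to coincide with $H_\tau f$ up to the choice of solution; recall $H_\tau f$ is pinned down by being orthogonal to $\Sigma = \{v : (\triangle_g-q_*)(e^{\tau\tilde\phi_0}v)=0\}$, i.e.\ to the kernel of $P_\tau$. So it suffices to check that $P_{-\tau}\tilde r \perp \ker P_\tau$ in $L^2(\Omega_1)$. For $v\in\ker P_\tau$ one has, by the weak formulation tested against a suitable $w$, that $(P_{-\tau}\tilde r, P_{-\tau}w) = \langle f,w\rangle$; writing $(P_{-\tau}\tilde r, v)$ and using that $v = P_{-\tau}w$ would need $v$ to be in the range of $P_{-\tau}$ on $H^2_0$, which is not automatic, so instead I would argue directly: $(P_{-\tau}\tilde r, v)_{L^2} = (\tilde r, P_{-\tau}^* v)_{L^2}$ by integration by parts (legitimate since $\tilde r\in H^2_0$, so all boundary terms vanish), and $P_{-\tau}^* v$ is, up to conjugation, $P_\tau$ applied to a conjugate-type weight acting on $v$; one checks that $v\in\ker P_\tau$ forces $P_{-\tau}^*v$ to vanish, or more cleanly, one shows $\ker P_\tau = (\mathrm{range}\,P_{-\tau}|_{H^2_0})^\perp$ is empty-complement modulo what we need. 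The cleanest route is probably: $P_{-\tau}\tilde r$ is a solution of $P_\tau(\cdot)=f$, the general solution is $H_\tau f + \Sigma$, and $P_{-\tau}\tilde r - H_\tau f \in \Sigma$; but $H_\tau f \in \Sigma^\perp$ and one shows $P_{-\tau}\tilde r\in\Sigma^\perp$ too via the above integration by parts, forcing the difference to be $0$. Finally, elliptic regularity for the fourth-order operator $P_\tau P_{-\tau}$ upgrades $\tilde r$ to $H^2_0$ (already built in) and gives $H_\tau f = P_{-\tau}\tilde r \in H^2(\Omega_1)$, and symmetrically one reads off $H_\tau^*: L^2(\Omega_1)\to H^2(\Omega_1)$ by the analogous argument with $P_{-\tau}P_\tau$, establishing the mapping properties claimed in the text preceding the lemma.

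The main obstacle I anticipate is the coercivity step: the Carleman estimate as stated only directly controls $\tau\|w\|_{L^2} + \|Dw\|_{L^2}$ by $\|P_{-\tau}w\|_{L^2}$, and one must carefully recover control of the full $H^2$ norm. This is done by an integration-by-parts (or G\aa rding-type) argument: expanding $\|P_{-\tau}w\|_{L^2}^2$ and integrating the principal part by parts produces $\|D^2 w\|_{L^2}^2$ plus lower-order terms that are absorbed using the already-established $L^2$ and $H^1$ bounds and Young's inequality, at the cost of taking $\tau$ (equivalently $h=\tau^{-1}$) large, which is exactly the regime of the Carleman estimate. A secondary technical point is making the integration-by-parts identities rigorous at the level of $H^2_0$ rather than $C^\infty_c$; this is handled by density of $C^\infty_c(\Omega_1)$ in $H^2_0(\Omega_1)$ and continuity of both sides. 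Once these are in place, uniqueness of $\tilde r$ is immediate from coercivity, and the identification $H_\tau f = P_{-\tau}\tilde r$ follows from the orthogonality characterization of $H_\tau$.
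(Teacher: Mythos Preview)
Your approach is essentially the same as the paper's. Both set up a bilinear form on $H^2_0(\Omega_1)$ and apply Lax--Milgram: the paper first substitutes $\hat r = e^{-\tau\tilde\phi_0}\tilde r$ and works with $B(\hat r,v)=\int_{\Omega_1} e^{2\tau\tilde\phi_0}\,(\triangle_g-q_*)\hat r\,(\triangle_g-q_*)v$, which is exactly your $B(\tilde r,w)=(P_{-\tau}\tilde r,P_{-\tau}w)_{L^2}$ under that change of variables. For the identification $H_\tau f = P_{-\tau}\tilde r$, the paper simply asserts $(P_{-\tau}\tilde r,w)_{L^2}=0$ for $w\in\Sigma$; your integration-by-parts observation that $P_{-\tau}^{*}=P_\tau$ (formally, since $\triangle_g-q_*$ is symmetric and $\tilde r\in H^2_0$) is precisely what justifies that line, so your hesitation there is unnecessary.

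The one genuine difference is in the coercivity step. The paper obtains coercivity of $B$ directly from Poincar\'e and Young inequalities, but this tacitly relies on a spectral assumption---noted in the Remark immediately following the proof---that $0$ is not a Dirichlet eigenvalue of $(\triangle_g-q_*)(\triangle_g-q_*)$, which the paper handles by adjusting $q_*$ if needed. Your route via the Carleman estimate (giving $\tau\|w\|_{L^2}+\|Dw\|_{L^2}\lesssim\|P_{-\tau}w\|_{L^2}$) followed by a G\aa rding-type integration by parts to recover $\|D^2w\|_{L^2}$ avoids that spectral assumption entirely, at the price of requiring $\tau$ large---which is harmless since the lemma is only ever invoked in that regime. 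So your argument is slightly more self-contained on this point.
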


\begin{proof}
Note that $  P_{\tau}P_{-\tau} \tilde{r} = f$ implies that $(\triangle_g-q_*)  (e^{2\tau \tilde{\phi}_0} (\triangle_g-q_*) (e^{-\tau \tilde{\phi}_0}\tilde{r})) = e^{\tau \tilde{\phi}_0} f = \hat{f}$. Let $ \hat{r} = e^{-\tau \tilde{\phi}_0} \tilde{r}$. Then it suffices to show that there exists a unique $H^2_0(\Omega_1)$ solution to:
$$(\triangle_g-q_*)  (e^{2\tau \tilde{\phi}_0} (\triangle_g-q_*) \hat{r}) = \hat{f}.$$ 
Note that weak solvability implies that:
$$B(\hat{r},v)=\int_{\Omega_1} e^{2\tau \tilde{\phi}_0} ((\triangle_g-q_*) \hat{r}) ((\triangle_g-q_*) v) = \int_{\Omega_1} \hat{f} v$$ for all $v \in H^2_0(\Omega_1)$.
Note that $B$ is a bilinear bounded operator on $H^2_0(\Omega_1) \times H^2_0(\Omega_1)$ and furthermore using the Poincare and Young inequalities we can obtain the coercivity estimate as well. Thus a simple application of the Lax-Milgram lemma yields the unique solvability of $ P_{\tau}P_{-\tau} \tilde{r} = f$ in $H^2_0(\Omega_1)$.
To see that $ H_{\tau} f = P_{-\tau} \tilde{r}$ we note that for any $ w \in  \Sigma= \left\{ {v\in L^2: (\triangle_g-q_*) (e^{\tau\tilde{\phi}_0}v)=0}\right\}$ we have: $$ (P_{-\tau}\tilde{r} , w)_{L^2} = 0 $$
\end{proof}

\begin{remark}
Note that in the previous proof, one must a priori know that $0$ is not a Dirichlet eigenvalue for the operator $(\triangle_g - q_*)(\triangle_g-q_*)$, but this can be taken for granted as we can always choose an arbitrary $q_0$ such that $q_0 \equiv q_*$ on $U^c$ and such that this hypothesis holds for $q_0$.
\end{remark}




\noindent Let $l_{\tau}(x;y)$ denote the kernel of $L_{\tau}$ (see Lemma ~\ref{second kernel}). Thus:
\[ Q_{\tau} v = F \iff v(x) = \int_{\Omega_1} l_{\tau}(x;y) F(y) d\mu_y.\]

\noindent Let us now define a new right inverse for the operator  $P_{\tau}  =  e^{-\tau \tilde{\phi_0}} (\triangle_g-q_*) (e^{\tau\tilde{\phi_0}} \cdot )$ through the formula $$ K_\tau = H_{\tau} + \pi_{\tau} L_{\tau}^{*}$$\\
\noindent Here $\pi_{\tau}:L^2(\Omega_1) \to L^2(\Omega_1)$ denotes the orthogonal projection operator of $L^2(\Omega_1)$ functions onto the set: $$\Sigma= \left\{ {v\in L^2: (\triangle_g-q_*) (e^{\tau\tilde{\phi_0}}v)=0}\right\} .$$ One can indeed show that $$\pi_{\tau} : H^{k}(\Omega_1) \to H^{k}(\Omega_1)$$ For a more in depth analysis of this operator we refer the reader to \cite{BU} and \cite{NS}. 
\begin{lemma}
\label{smoothness}
\[ K_{\tau} : L^2(\Omega_1) \to H^2(\Omega_1) \]
\[\|K_{\tau}\|_{L^2(\Omega_1) \to L^2(\Omega_1)} \leq \frac{C}{\tau}.\]
\end{lemma}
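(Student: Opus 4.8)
The plan is to establish the two claims of Lemma~\ref{smoothness} separately, leveraging the regularity statement already proved in Lemma~\ref{regularity} together with the mapping properties of the projection $\pi_\tau$.

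\textbf{Step 1: The operator norm bound on $L^2$.} Recall that $K_\tau = H_\tau + \pi_\tau L_\tau^*$. The bound $\|H_\tau\|_{L^2 \to L^2} \leq C\tau^{-1}$ is precisely Lemma~\ref{construction new}. For the second term, observe that $L_\tau^*$ is the adjoint of $L_\tau$, and Lemma~\ref{second kernel} gives $\|L_\tau\|_{L^2 \to L^2} \leq C\tau^{-1}$; taking adjoints, $\|L_\tau^*\|_{L^2 \to L^2} \leq C\tau^{-1}$ as well. Since $\pi_\tau$ is an orthogonal projection it has operator norm $1$ on $L^2$, so $\|\pi_\tau L_\tau^*\|_{L^2 \to L^2} \leq C\tau^{-1}$. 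Combining via the triangle inequality yields $\|K_\tau\|_{L^2 \to L^2} \leq C\tau^{-1}$. One should also check that $K_\tau$ is genuinely a right inverse of $P_\tau$: applying $P_\tau$ to $\pi_\tau L_\tau^* f$ gives zero since $\pi_\tau$ maps into $\Sigma = \ker(P_\tau \text{ acting via } (\triangle_g - q_*)e^{\tau\tilde\phi_0}\cdot)$, so $P_\tau K_\tau = P_\tau H_\tau = \mathrm{Id}$.

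\textbf{Step 2: The $H^2$ regularity.} For $H_\tau$, Lemma~\ref{regularity} shows $H_\tau f = P_{-\tau}\tilde r$ with $\tilde r \in H^2_0(\Omega_1)$ the solution of $P_\tau P_{-\tau}\tilde r = f$; since $P_{-\tau} = e^{\tau\tilde\phi_0}(\triangle_g - q_*)(e^{-\tau\tilde\phi_0}\cdot)$ is a second-order operator with smooth (bounded, with bounded derivatives) coefficients, it maps $H^2_0$ into $L^2$, but in fact the conclusion of Lemma~\ref{regularity} is stated as $H_\tau: L^2 \to H^2$, so we may invoke it directly: for $f \in L^2(\Omega_1)$, $H_\tau f \in H^2(\Omega_1)$. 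For the term $\pi_\tau L_\tau^*$: the analogue of Lemma~\ref{regularity} applied to the weight $\tilde\psi_0$ (valid since, as noted after the definition of $\tilde\psi_0$, the Carleman estimate of Lemma~\ref{est} holds verbatim for $\tilde\psi_0$) shows $L_\tau: L^2 \to H^2$, and by duality $L_\tau^*: H^{-2} \to L^2$; bootstrapping as in Lemma~\ref{regularity} one gets $L_\tau^*: L^2 \to H^2$ as well. Finally, the stated mapping property $\pi_\tau: H^k(\Omega_1) \to H^k(\Omega_1)$ with $k=2$ (for which the excerpt refers to \cite{BU} and \cite{NS}) guarantees $\pi_\tau L_\tau^* f \in H^2(\Omega_1)$. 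Hence $K_\tau f \in H^2(\Omega_1)$.

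\textbf{Main obstacle.} The routine part is the $L^2$ bound; the delicate point is the $H^2$ regularity of the mixed term $\pi_\tau L_\tau^*$, which rests on two facts we are permitted to quote but which carry genuine content: first, that the adjoint solution operator $L_\tau^*$ (not just $L_\tau$ itself) improves regularity by two derivatives — this requires the symmetric argument of Lemma~\ref{regularity} applied to $Q_\tau Q_{-\tau}$ and the observation that $Q_\tau^* = P_{-\tau}$-type operators preserve the relevant Sobolev scale; and second, that the orthogonal projection $\pi_\tau$ onto the (infinite-dimensional) space $\Sigma$ of solutions is bounded on $H^2$, which is a nontrivial elliptic-regularity fact about the solution space established in \cite{BU, NS}. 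Assembling these, together with care that the constants in all the estimates depend only on $(\Omega_1, g)$ and $\|q_*\|_{L^\infty}$ and not on $\tau$, completes the proof.
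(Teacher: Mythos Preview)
Your proposal is correct and follows essentially the same approach as the paper: decompose $K_\tau = H_\tau + \pi_\tau L_\tau^*$, use the Carleman-estimate bounds on $H_\tau$ and $L_\tau$ together with $\|\pi_\tau\|_{L^2\to L^2}=1$ for the norm estimate, and combine Lemma~\ref{regularity} with the $H^2$-boundedness of $\pi_\tau$ for the regularity claim. The only difference is cosmetic: the paper obtains $L_\tau^*:L^2\to H^2$ in one step by dualizing the statement $L_\tau:H^{-2}\to L^2$ (which is what the analogue of Lemma~\ref{regularity} for $\tilde\psi_0$ gives), whereas you first dualize $L_\tau:L^2\to H^2$ to get $L_\tau^*:H^{-2}\to L^2$ and then invoke a further ``bootstrapping'' step that is not really needed.
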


\begin{proof}
This is a direct consequence of Lemma ~\ref {regularity}. Note that:
$$K_{\tau} = H_{\tau} + \pi_{\tau} L_{\tau}^{*}.$$
Recall that $ L_{\tau}: H^{-2}(\Omega_1) \to L^2(\Omega_1)$. By duality this implies that $L_{\tau}^{*}: L^2(\Omega_1) \to H^2(\Omega_1)$. Furthermore elliptic regularity implies that $H_{\tau}: L^2(\Omega_1) \to H^2(\Omega_1)$ and $\pi_{\tau}: H^2(\Omega_1) \to H^2(\Omega_1)$. Also note that $\|\pi_{\tau}\|_{L^2 \to L^2} =1$, $\|L_{\tau}^*\|_{L^2 \to L^2}=\|L_{\tau}\|_{L^2 \to L^2} \leq \frac{C}{\tau}$. The norm estimate follows immediately.
\end{proof}
\bigskip
\noindent We will now state a lemma that will be key in accomplishing the boundary determination of special solutions to Schr\"{o}dinger equation. 
\\
\begin{lemma}
\label{symmetry}
\[K_{\tau}P_{\tau} v = v\] 
for all $v \in C^{\infty}_c(V)$.
\end{lemma}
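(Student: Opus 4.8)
The plan is to show that $K_\tau$, the particular right inverse of $P_\tau$ built out of $H_\tau$ and $\pi_\tau L_\tau^*$, acts as a genuine two-sided inverse when restricted to the subspace $C^\infty_c(V)$ of functions supported in $V$, which lies well inside the Euclidean region where the weights $\tilde\phi_0$ and $\tilde\psi_0$ agree up to sign. The first step is to note that $P_\tau K_\tau = I$ on all of $L^2(\Omega_1)$: indeed $P_\tau H_\tau = I$ by Lemma \ref{construction new}, and $P_\tau \pi_\tau L_\tau^* = 0$ because $\pi_\tau$ projects onto $\Sigma$, the kernel of $P_\tau$ (elements $w$ of $\Sigma$ satisfy $(\triangle_g - q_*)(e^{\tau\tilde\phi_0}w) = 0$, i.e.\ $P_\tau w = 0$). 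Consequently $P_\tau(K_\tau P_\tau v - v) = P_\tau v - P_\tau v = 0$, so $w := K_\tau P_\tau v - v \in \Sigma$. The goal reduces to showing $w = 0$, and since $w$ lies in the kernel it suffices to show $w$ is orthogonal to $\Sigma$.

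The orthogonality will come from two facts. On one hand, $H_\tau P_\tau v$ is by construction orthogonal to $\Sigma$ (this is the defining property of $H_\tau = H_\tau f$ in Lemma \ref{construction new}), and $\pi_\tau L_\tau^* P_\tau v$ is manifestly \emph{in} $\Sigma$ rather than orthogonal to it; so the decomposition $K_\tau P_\tau v = H_\tau P_\tau v + \pi_\tau L_\tau^* P_\tau v$ already splits $K_\tau P_\tau v$ into its $\Sigma^\perp$-part $H_\tau P_\tau v$ and a $\Sigma$-part. Thus $w = K_\tau P_\tau v - v$ being in $\Sigma$ forces $H_\tau P_\tau v - v \in \Sigma$; but $H_\tau P_\tau v - v$ is precisely the thing whose $\Sigma^\perp$-component should reconstruct $v$. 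The heart of the argument is therefore: $v$ itself, for $v \in C^\infty_c(V)$, must be shown to differ from its orthogonal projection onto $\Sigma^\perp$ by exactly the piece $\pi_\tau L_\tau^* P_\tau v$. Concretely I would compute $(w, u)_{L^2}$ for $u \in \Sigma$, write $u = e^{-\tau\tilde\phi_0}U$ with $(\triangle_g - q_*)U = 0$, and integrate by parts: the term $(K_\tau P_\tau v, u)$ splits, $(H_\tau P_\tau v, u) = 0$ automatically, and $(\pi_\tau L_\tau^* P_\tau v, u) = (L_\tau^* P_\tau v, u)$ since $u \in \Sigma$. Now use the adjoint relation and the fact that $Q_\tau$ built from $\tilde\psi_0 = -x_1\chi_0 + F_\lambda\circ\omega$ coincides (after the reflection $x_1 \mapsto -x_1$, which on $V$ is an isometry of the Euclidean metric) with a conjugated copy of $P_\tau$; this is where the symmetric Fadeev-type structure enters and where the support condition $v \in C^\infty_c(V) \subset$ Euclidean region is essential, because only there do $\tilde\phi_0$ and $-\tilde\psi_0$ literally agree.

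The main obstacle I anticipate is bookkeeping the interaction between $L_\tau^*$ and $P_\tau$ on the Euclidean piece: one needs a clean identity of the shape $(L_\tau^* P_\tau v, u)_{L^2} = (v, u)_{L^2} - (\text{projection of } v \text{ onto } \Sigma^\perp, u)_{L^2}$ for $u \in \Sigma$, i.e.\ that $L_\tau^*$ "inverts" $P_\tau$ modulo the projection, valid only after testing against functions in $\Sigma$ and only because $\operatorname{supp} v \subset V$. Establishing this requires knowing $L_\tau^*$ explicitly enough — via Lemma \ref{regularity} and its analogue for $L_\tau$, which gives $L_\tau^* : L^2 \to H^2_0$ with a weak formulation — and then verifying that on $V$ the weights relevant to $L_\tau$ and to $P_\tau$ are reflections of one another, so that the boundary terms from integration by parts vanish (the functions $v$ and $u$ have the right support/decay: $v$ compactly supported in $V$, $U$ solving an elliptic equation globally). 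Once that identity is in hand, combining it with $P_\tau(K_\tau P_\tau v - v) = 0$ and $H_\tau P_\tau v \perp \Sigma$ forces $w \perp \Sigma$ and $w \in \Sigma$, hence $w = 0$, giving $K_\tau P_\tau v = v$ for all $v \in C^\infty_c(V)$.
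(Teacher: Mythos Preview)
Your approach is correct and is essentially the paper's argument recast in operator language rather than at the kernel level: both hinge on the single fact that $\tilde\psi_0=-\tilde\phi_0$ on $V$, so that $P_{-\tau}=Q_\tau$ as differential operators there. The identity you flag as the main obstacle, $(L_\tau^*P_\tau v,u)=(v,u)$ for $u\in\Sigma$, is in fact immediate once you write $(L_\tau^*P_\tau v,u)=(P_\tau v,L_\tau u)=(v,P_{-\tau}(L_\tau u))$ (integrate by parts; $v\in C^\infty_c(V)$) and note that on $\operatorname{supp} v\subset V$ one has $P_{-\tau}(L_\tau u)=Q_\tau(L_\tau u)=u$; no coordinate reflection is involved. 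The paper performs the dual computation, applying $P_{-\tau}$ in the $y$-variable to the kernel $k_\tau(x,y)$ and then using the decomposition $v=P_{-\tau}\tilde v+\pi_\tau v$ from Lemma~\ref{regularity}.
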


\begin{proof}
\[ \int_{V} k_{\tau}(x,y) P_{\tau} v(y) d\mu(y) =  \int_{V} (P_{-\tau} k_{\tau}(x,y))  v(y) d\mu(y) .  \]  
Now recall that $K_{\tau} = H_{\tau}+\pi_{\tau}L^*_{\tau}$. Also note that $P_{-\tau}=Q_{\tau}$ for $y \in V$. Hence:
\[P_{-\tau} K_{\tau}(x,y) = P_{-\tau} h_{\tau}(x,y) + \pi_{\tau}(x,y).\] 
Thus:
\[ \int_{V} g_{\tau}(x,y) P_{\tau} v(y) d\mu(y)= \int_{V} h_{\tau}(x,y) P_{\tau} v(y) d\mu(y)+\int_{V} \pi_{\tau}(x,y) v(y) d\mu(y).\]
Let us note that for any $w \in L^2(\Omega_1)$ we have $w= (1-\pi_{\tau})w + \pi_{\tau} w = P_{-\tau}\tilde{w} + \pi_\tau w$ for some $\tilde{w} \in H^2_0(\Omega_1)$
Hence:
\[ \int_{V} h_{\tau}(x,y) P_{\tau} v(y) d\mu(y) =  \int_{\Omega_1} h_{\tau}(x,y) P_{\tau} v(y) d\mu(y)= \int_{\Omega_1} h_{\tau}(x,y) P_{\tau} P_{-\tau} \tilde{v}(y) d\mu(y) = P_{-\tau} \tilde{v}(x). \] 
Also:
\[ \int_{V} \pi_{\tau}(x,y) v(y) d\mu(y) =  \int_{\Omega_1} \pi_{\tau}(x,y) v(y) d\mu(y) =\pi_{\tau} v.\]
Hence:
\[ \int_{V} k_{\tau}(x,y) P_{\tau} v(y) d\mu(y)= (P_{-\tau} \tilde{v})(x)+ (\pi_{\tau} v)(x) = v(x).\]

\end{proof}

\noindent Let us now define the operator $T_q : L^2(\Omega) \to H^1(\Omega_1)$ through:
\[ T_q (F)(x) := \int_{\Omega} k_{\tau}(x;y)(q(y)-q_*(y))  F(y) d\mu_y.\]
The Carleman estimate in the previous section yields that $T_q:L^2(\Omega) \to L^2(\Omega_1)$ is a contraction mapping with $\|T_q\| \leq \frac{C}{\tau}$ for $\tau$ large enough.Let $a_0 \in H^1(\Omega_1)$ be a solution to $P_{\tau} a_0=0$.  Let us consider the integral equation
 $$a_1(x) = a_0(x) + \int_{\Omega_1}  k_{\tau}(x;y) (q(y)-q_*(y)) a_1(y) d\mu_y.$$
For sufficiently large $\tau$, this integral equation has a unique solution $a_1 \in H^2(\Omega_1)$. Indeed the integral equation is equivalent to:
$$a_1 = a_0+ K_{\tau}((q-q_*)a_1). $$ 
but since $\|K_{\tau}\|_{L^2(\Omega_1) \to L^2(\Omega_1)} \leq \frac{C}{\tau}$ we see that $(I-K_\tau (q-q_*)):L^2(\Omega_1) \to L^2(\Omega_1)$ is invertible with an explicit inverse in terms of a Neumann series and therefore $a_1= (I-K_{\tau}(q-q_*))^{-1}a_0$. Furthermore it is clear from Lemma ~\ref{smoothness} that $a_1 \in H^2(\Omega_1)$.\\

 \noindent Let $G_{\tau}(x;y) = e^{\tau \tilde{\phi_0}(x)} k_{\tau}(x;y) e^{-\tau\tilde{\phi_0}(y)}$ and $ u_i=e^{\tau\tilde{\phi_0}}a_i$ for $i \in \{0,1\}$ \\
\[ u_1(x) = u_0(x) + \int_{\Omega}  G_{\tau}(x;y) (q(y)-q_*(y)) u_1(y) d\mu_y.\]\\
Hence since $ (-\triangle_g +q)u_1=0$ we have:
\[ u_1(x) = u_0(x) + \int_{V}  G_{\tau}(x;y) (\triangle_g-q_*) u_1(y) d\mu_y .\]\\
Note that $y \in V$ where $V$ denotes the support of $q$. So for $x \in \Omega_1\setminus \bar\Omega$ Green's identity implies that:
\[ u_1(x) = u_0(x) + \int_{\partial V}  G_{\tau}(x;y)\partial_{\nu} u_1(y) d\mu_y- \int_{\partial V} \partial_{\nu}G_{\tau}(x;y) u_1(y) d\mu_y +\int_{V}  (\triangle_g-q_*) G_{\tau}(x;y)) u_1(y) d\mu_y. \]
Recall that Lemma ~\ref{symmetry} implies that for $ x \neq y$ and $y \in V$ we have $(\triangle_g-q_*) G_{\tau} (x;y) = 0 $ (differentiation with respect to the $y$ variable). 
Hence:
\[ u_1(x) = u_0(x) + \int_{\partial V}  G_{\tau}(x;y)\Lambda_q u_1(y) d\mu_y- \int_{\partial V}\partial_{\nu} G_{\tau}(x;y) u_1(y) d\mu_y. \]
\\
For any $f \in H^{\frac{1}{2}}(\partial \Omega)$ let $R_q f \in H^1(\Omega)$ denote the solution operator to $(-\triangle_g + q)u=0$ with $u|_{\partial\Omega}=f$. This solution exists and is unique since $0$ is not a Dirichlet eigenvalue of $-\triangle_g+q.$ \\
Take $x \in \Omega_1\setminus \bar\Omega$ and let $f:=u_1|_{\partial\Omega}$. Then:
\[ \int_{\partial V}  G_{\tau}(x;y)\Lambda_q u_1(y) d\mu_y- \int_{\partial V}\partial_{\nu} G_{\tau}(x;y) u_1(y) = e^{\tau\tilde{\phi_0} (x)} T_q [e^{-\tau\tilde{\phi_0}} R_q(f)](x)=u_1(x) - u_0(x).\]
Define $\Gamma_{\tau}:H^{\frac{1}{2} }(\partial\Omega) \to  H^{\frac{1}{2} }(\partial\Omega)$ through:
\[ \Gamma_{\tau}f=Tr \circ [e^{\tau\tilde{\phi_0}} T_q[e^{-\tau\tilde{\phi_0}} R_q(f)]] .  \]

\noindent We will now state a unique continuation lemma that is a key step in determining the boundary values of special solutions.
\begin{lemma}
\label{unique continuation}
For any $f \in  H^{\frac{1}{2}}(\partial \Omega)$, $\Gamma_{\tau} f$ is known from the knowledge of $\Lambda_q$.
\end{lemma}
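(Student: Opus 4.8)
The plan is to realise $\Gamma_\tau f$ as the trace on $\partial\Omega$ of a single function $w$ on the extended manifold $\Omega_1$, to recover $w$ on $\Omega_1\setminus\bar\Omega$ from $\Lambda_q$ by a boundary integral identity, and then to carry $w$ across $\partial\Omega$ by unique continuation. Given $f\in H^{\frac{1}{2}}(\partial\Omega)$ put $u_1:=R_qf$, so that $(-\triangle_g+q)u_1=0$ in $\Omega$ and $u_1|_{\partial\Omega}=f$, and set
\[ w:=e^{\tau\tilde{\phi_0}}\,T_q\big[e^{-\tau\tilde{\phi_0}}u_1\big],\qquad\text{so that}\qquad \Gamma_\tau f=w|_{\partial\Omega}. \]
Unwinding the kernel of $T_q$, $w(x)=\int_\Omega G_\tau(x;y)\,(q(y)-q_*(y))\,u_1(y)\,d\mu_y$ for $x\in\Omega_1$. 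Since $(-\triangle_g+q)u_1=0$ we have $(q-q_*)u_1=(\triangle_g-q_*)u_1$, and as $P_\tau K_\tau=I$ — equivalently $(\triangle_g-q_*)_x G_\tau(x;y)=\delta(x-y)$ — this gives $(\triangle_g-q_*)w=(q-q_*)u_1$, a function supported in $\mathrm{supp}(q-q_*)\subset\subset U$. Choose a smooth domain $W$ with $\mathrm{supp}(q-q_*)\subset\subset W\subset\subset U$ and $\Omega_1\setminus\overline W$ connected; then $(\triangle_g-q_*)w=0$ on $\Omega_1\setminus\overline W$, which contains $\Omega_1\setminus\bar\Omega$ and a neighbourhood of $\partial\Omega$. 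It suffices to recover $w|_{\partial\Omega}$ from $\Lambda_q$.

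Fix $x\in\Omega_1\setminus\bar\Omega$, so $x\notin\overline W$ and $G_\tau(x;\cdot)$ has no singularity on $\overline W$. Applying Green's identity over $W$ to the pair $G_\tau(x;\cdot)$ and $u_1$, and using that Lemma~\ref{symmetry} forces $(\triangle_g-q_*)_y G_\tau(x;y)=0$ for $y\in W$ (so the interior term drops), we obtain
\[ w(x)=\int_{\partial W}\Big(G_\tau(x;y)\,\partial_\nu u_1(y)-\partial^{y}_{\nu}G_\tau(x;y)\,u_1(y)\Big)\,d\mu_y. \]
The kernel $G_\tau$ is built only from the known data $(\Omega_1,g,q_*,\tilde{\phi_0},\tilde{\psi_0},\tau)$ through $K_\tau=H_\tau+\pi_\tau L_\tau^{*}$, so it is explicitly known. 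On the other hand $u_1$ solves the \emph{known} equation $(-\triangle_g+q_*)u_1=0$ on the connected region $\Omega\setminus\overline W$ (since $q=q_*$ there) and carries the Cauchy data $(f,\Lambda_qf)$ on $\partial\Omega$, which is known; by uniqueness for the Cauchy problem of this second order elliptic operator — a consequence of strong unique continuation, see \cite{U} — the restriction $u_1|_{\Omega\setminus\overline W}$, hence the Cauchy pair $(u_1,\partial_\nu u_1)|_{\partial W}$, is uniquely determined by $(f,\Lambda_qf)$. Therefore $w(x)$ is known from $\Lambda_q$ for every $x\in\Omega_1\setminus\bar\Omega$.

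It remains to carry $w$ across $\partial\Omega$. By the first step $w$ satisfies the homogeneous elliptic equation $(\triangle_g-q_*)w=0$ on the connected open set $\Omega_1\setminus\overline W$, and by the previous step it is known on the nonempty open subset $\Omega_1\setminus\bar\Omega$; strong unique continuation then determines $w$ on all of $\Omega_1\setminus\overline W$, in particular on a neighbourhood of $\partial\Omega$, and taking the trace gives $\Gamma_\tau f=w|_{\partial\Omega}$ in terms of $\Lambda_q$.

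The hard part is the middle step. One must convert the a priori unknown interior integral $\int_\Omega G_\tau\,(q-q_*)\,u_1$ into an integral over the interior hypersurface $\partial W$ — exactly what the symmetry identity of Lemma~\ref{symmetry} (valid on the Euclidean support region) provides — and then invert a Cauchy problem to transport the data of $u_1$ from $\partial\Omega$ inward to $\partial W$; this transport is where unique continuation is indispensable, and it is the structural reason the argument needs $q-q_*$ to be supported in $U$. The regularity is routine: $u_1\in H^1(\Omega)$ with $\triangle_g u_1\in L^2(\Omega)$ lies in $H^2_{\mathrm{loc}}(\Omega)$, so the traces on $\partial W$ make sense, and interior elliptic regularity makes $u_1$ and $w$ smooth wherever their equations are homogeneous.
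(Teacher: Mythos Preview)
Your argument is correct and follows the same strategy as the paper: rewrite $w$ for $x\in\Omega_1\setminus\bar\Omega$ as a boundary integral over an interior hypersurface via Green's identity and Lemma~\ref{symmetry}, then transport the Cauchy data of $R_qf$ from $\partial\Omega$ to that hypersurface by unique continuation for $-\triangle_g+q_*$ in the exterior region. Your final step (carrying $w$ across $\partial\Omega$ by a second unique continuation) is unnecessary, since by definition $\Gamma_\tau f$ is the trace of $w$ on $\partial\Omega$ taken from $\Omega_1\setminus\bar\Omega$, which is already determined once $w$ is known there; the paper simply takes this trace.
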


\begin{proof}
It suffices to show that for any $f \in  H^{\frac{1}{2}}(\partial \Omega)$ we can determine  $\int_{\partial V}  G_{\tau}(x;y)(\Lambda_q f)(y) d\mu_y- \int_{\partial V}\partial_{\nu} G_{\tau}(x;y) f(y)$ from the knowledge of $\Lambda_q$. We need to define some notation. Note that $\Pi$ divides the manifold into two submanifolds. We will call the one intersecting the set $\{x_3>0\}$ to be $W_u$ and the one intersecting the set $\{x_3<0\}$ to be $W_l$. Let us consider the manifold $W_u \setminus V$. Note that $(\triangle_g-q_*) (R_q f) = 0$ in $W_u \setminus V$. Furthermore we know the Dirichlet and Neumann data for $R_q f$ on $\Omega$ from $\Gamma_q$. We will now proceed to show that this "exterior" pde has a unique solution and thus conclude that $R_q f|_{\partial V}$ and $\partial_{\nu} R_q f|_{\partial V}$ can be determined from $\Lambda_q$. 
Indeed consider the pde $(\triangle_g-q_*) u=0$ in $W_u \setminus V$ with $u|_{\partial \Omega \cap W_u}=f|_{\partial \Omega \cap W_u}$ and $\partial_{\nu} u|_{\partial \Omega \cap W_u}=\Lambda_q f|_{\partial \Omega \cap W_u}$. Suppose there are two solutions $u_1,u_2$ to this pde and consider $v=u_1-u_2$. Since the metric $g$ is Euclidean in a small neighborhood of $V$ we can use Cauchy–Kowalevski theorem to conclude that $v \equiv 0$ in a neighborhood of $V$. We can then use a unique continuation theorem for a second order elliptic operator to conclude that $v \equiv 0$ in $ W_u \setminus V$ ( See for instance \cite{U}).  

\end{proof}

\noindent Observe that we have obtained the following boundary integral equation:
\[ {\bf(I - \Gamma_\tau)(u_1(x)|_{\partial\Omega})= u_0(x)|_{\partial\Omega}}.\]
Thus in order to determine the boundary values of our special solutions to the Schr\"{o}dinger equation it suffices to show that we can uniquely solve the above boundary integral equation. This
will be accomplished through the following theorem. \\

\begin{lemma} 

$\Gamma_{\tau}:  H^{\frac{1}{2} }(\partial\Omega) \to  H^{\frac{1}{2} }(\partial\Omega)$ is compact. Furthermore, for $\tau$ sufficiently large, $\mathbb{N} ( I - \Gamma_{\tau}) = \emptyset$.\\

\end{lemma}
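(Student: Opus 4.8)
The plan is to prove the two assertions separately. For compactness, I would unwind the definition $\Gamma_{\tau}f=\mathrm{Tr}\big(e^{\tau\tilde{\phi_0}}\,T_q[e^{-\tau\tilde{\phi_0}}R_qf]\big)$ together with $T_q(F)=K_{\tau}\big((q-q_*)F\big)$ and track mapping properties: $R_q\colon H^{1/2}(\partial\Omega)\to H^1(\Omega)$ is bounded; multiplication by the smooth functions $e^{\pm\tau\tilde{\phi_0}}$ is bounded on each Sobolev space; multiplication by $q-q_*\in C^\infty_c(V)$ maps $L^2(\Omega)$ (extended by zero) into $L^2(\Omega_1)$; and, by Lemma~\ref{smoothness}, $K_{\tau}\colon L^2(\Omega_1)\to H^2(\Omega_1)$. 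Composing, $\Gamma_{\tau}$ maps $H^{1/2}(\partial\Omega)$ boundedly into $H^{3/2}(\partial\Omega)$, being the $\partial\Omega$-trace of an $H^2(\Omega)$ function; since the Rellich embedding $H^{3/2}(\partial\Omega)\hookrightarrow H^{1/2}(\partial\Omega)$ on the compact manifold $\partial\Omega$ is compact, $\Gamma_{\tau}$ is compact on $H^{1/2}(\partial\Omega)$.

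For the kernel, suppose $(I-\Gamma_{\tau})g=0$, i.e.\ $g=\Gamma_{\tau}g$. Put $v:=R_qg\in H^1(\Omega)$ and $w:=e^{\tau\tilde{\phi_0}}K_{\tau}\big((q-q_*)e^{-\tau\tilde{\phi_0}}v\big)\in H^2(\Omega_1)$, so that $w|_{\partial\Omega}=\Gamma_{\tau}g=g=v|_{\partial\Omega}$. Since $K_{\tau}$ is a right inverse of $P_{\tau}$, applying $P_{\tau}$ to $e^{-\tau\tilde{\phi_0}}w$ gives $(\triangle_g-q_*)w=(q-q_*)v$ on $\Omega_1$ (the product $(q-q_*)v$ being extended by zero, which is unambiguous since $\operatorname{supp}(q-q_*)\subset\subset\Omega$), while $(\triangle_g-q_*)v=\triangle_gv-q_*v=(q-q_*)v$ on $\Omega$ because $(-\triangle_g+q)v=0$. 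Subtracting, $w-v$ solves $(\triangle_g-q_*)(w-v)=0$ in $\Omega$ with zero Dirichlet data. Modifying $q_*$ inside $U$ if necessary — which alters neither $\Lambda_q$ nor the hypotheses of Theorem~\ref{euclid} — we may assume $0$ is not a Dirichlet eigenvalue of $-\triangle_g+q_*$ on $\Omega$, and therefore conclude $w=v$ in $\Omega$.

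To finish, set $a:=e^{-\tau\tilde{\phi_0}}v$ and $b:=K_{\tau}\big((q-q_*)a\big)\in L^2(\Omega_1)$ (with $(q-q_*)a$ extended by zero), so that $w=e^{\tau\tilde{\phi_0}}b$ and hence $b=a$ on $\Omega$. Since $q-q_*$ is supported in $V\subset\subset\Omega$, we get $(q-q_*)b=(q-q_*)a$ as elements of $L^2(\Omega_1)$, so $b=K_{\tau}\big((q-q_*)b\big)$, i.e.\ $\big(I-K_{\tau}M_{q-q_*}\big)b=0$ on $L^2(\Omega_1)$. By Lemma~\ref{smoothness}, $\|K_{\tau}M_{q-q_*}\|_{L^2(\Omega_1)\to L^2(\Omega_1)}\le C\|q-q_*\|_{L^\infty}/\tau<1$ for $\tau$ large, so $b=0$; then $v=e^{\tau\tilde{\phi_0}}a=0$ in $\Omega$ and $g=v|_{\partial\Omega}=0$. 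This gives $\mathbb{N}(I-\Gamma_{\tau})=\{0\}$, and, combined with the compactness of $\Gamma_{\tau}$ and the Fredholm alternative, that $I-\Gamma_{\tau}$ is in fact invertible on $H^{1/2}(\partial\Omega)$ for $\tau$ large.

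The main obstacle I anticipate is the bookkeeping between $\Omega$ and the extended manifold $\Omega_1$ — in particular, verifying carefully that the identity $w=v$, which holds only on $\Omega$, together with $\operatorname{supp}(q-q_*)\subset\subset\Omega$, propagates to the \emph{global} fixed-point identity $b=K_{\tau}\big((q-q_*)b\big)$ on all of $\Omega_1$ that is needed to run the Neumann series — as well as confirming that the auxiliary non-eigenvalue normalization for $-\triangle_g+q_*$ is genuinely without cost. The smoothing step giving compactness and the final contraction argument are otherwise routine.
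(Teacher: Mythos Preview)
Your proof is correct and follows essentially the same route as the paper's: for compactness you track the mapping properties of $R_q$, $K_\tau$, and the trace (the paper places the compact embedding at $H^1\hookrightarrow L^2$ in the interior rather than at $H^{3/2}\hookrightarrow H^{1/2}$ on the boundary, but this is immaterial); for the kernel you set $\rho:=b=K_\tau((q-q_*)e^{-\tau\tilde\phi_0}R_qg)$, use uniqueness for the Dirichlet problem $(\triangle_g-q_*)$ to identify $e^{\tau\tilde\phi_0}\rho$ with $R_qg$ on $\Omega$, deduce the fixed-point identity $\rho=K_\tau((q-q_*)\rho)$ on $\Omega_1$, and conclude by contraction. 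Your explicit handling of the $\Omega$ versus $\Omega_1$ bookkeeping and of the non-eigenvalue normalization for $-\triangle_g+q_*$ is in fact more careful than the paper's own presentation, which leaves these points implicit.
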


\begin{proof}
\[ \Gamma_{\tau}= Tr\circ[e^{\tau\tilde{\phi_0}(x)} T_q [e^{-\tau\tilde{\phi_0}} R_q(f)]]\]\\

Note that $R_q: H^{\frac{1}{2} }(\partial\Omega) \to H^{1}(\Omega)$ is a bounded linear operator.Secondly, $H^1 \subset\subset L^2$ and $T_q$ is a bounded operator from $L^2$ to $H^1$. Furthermore $Tr: H^1(\Omega_1 \setminus \Omega) \to H^{\frac{1}{2} }(\partial\Omega) $
is bounded. Hence $\Gamma_{\tau}$ is compact.\\
\\
\noindent To prove that the kernel is empty, let us suppose that $( I - \Gamma_{\tau}) f = 0$
Then: $\Gamma_{\tau} f = f$. Let $\rho=T_q(e^{-\tau\tilde{\phi_0}} R_q(f))$. Then:
\[ \rho(x) =  \int_{\Omega} k_{\tau}(x;y)e^{-\tau\tilde{\phi_0}(y)}(q(y)-q_*(y))  R_q(f)(y) d\mu_y.\]
Then:
\[ e^{\tau\tilde{\phi_0}(x)}\rho(x) =  \int_{\Omega} G_{\tau}(x;y)(q(y)-q_*(y))  R_q(f)(y) d\mu_y.\]
Hence:
\[ (\triangle_g-q_*) (e^{\tau\tilde{\phi_0}} \rho) = (q-q_*) R_q(f)= (\triangle_g-q_*)(R_q(f)).\]
But by hypothesis we have that $\Gamma_\tau(f) = f$ so $e^{\tau\tilde{\phi_0}} \rho|_{\partial\Omega} = f $  Hence $e^{\tau\tilde{\phi_0}(x)} \rho(x)= R_{q}(f)(x)$ on $\Omega$
so:
\[ \rho= T_q(\rho).\]
Finally since $T_q$ is a contraction mapping for $\tau$ large enough, we deduce that $ \rho = 0$ everywhere. Hence $f=0$.
\bigskip
\end{proof}

\section{Further Results}

In this section we will state a few theorems which generalize Theorem ~\ref{euclid}. For the sake of brevity we will only indicate the key differences of the proofs to that of Theorem ~\ref{euclid}.  As a first step in generalizing Theorem ~\ref{euclid} recall that we had assumed $\Gamma$ to be connected. This assumption can be relaxed significantly. Indeed one can strengthen the result by allowing $\Gamma$ to have several components. In this case a more delicate Carleman weight needs to be constructed. This will be the main content of Theorem ~\ref{euclid2}.\\


\noindent Finally we will seek to generalize the result further by assuming that the metric restricted to the subdomain $U$ is conformally transversally anisotropic. These are manifolds $(U,g)$ where $U = \mathbb{R} \times U_0$ and:
\[ g =c(x)(dx_1^2 + g_0(x')) \] 
Here, $g_0$ denotes the induced metric on the transversal submanifold $U_0$ which is independent of $x_1$. Indeed recall that a key step in proving Theorem ~\ref{euclid} is the construction of a global phase function which is locally a limiting Carleman weight in $U$. In [5] it is shown that local existence of limiting Carleman weights restricts the geometry to CTA geometries. Hence it would seem natural to expect Theorem ~\ref{euclid} to have a generalization in this setting. This will be the content of Theorem ~\ref{cta}.\\ 

\begin{theorem}
\label{euclid2}
Let $(\Omega^3,g)$ denote a compact smooth Riemannian manifold with smooth boundary. Let $U \subset \Omega$ be an open subset such that $\Gamma=U \cap \partial \Omega$ is non-empty, and strictly convex. Let $\Gamma=\cup_{i=1}^{l} \Gamma^i$ where $\Gamma^i$ denotes the connected components of $\Gamma$. Let us assume that $U$ can be covered with coordinate charts in which $g|_U$ is the Euclidean metric. Let $U^i$ denote the convex hull of $\Gamma_i$ and let us assume that $U^i \cap U^j = \emptyset \hspace{2mm} \forall i,j$ and that $U = \cup_{i=1}^l U^i$. Suppose $q$ is a smooth function and that $q-q_*$ is compactly supported in $U$ where $q_*$ is a globally known smooth function. Then the knowledge of $\Lambda_q$ will uniquely determine $q$.\\ 
\end{theorem}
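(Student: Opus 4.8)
The plan is to run the proof of Theorem~\ref{euclid} separately on each convex component $U^i$, the only genuinely new ingredient being a more elaborate global Carleman weight. As in Theorem~\ref{euclid}, each $U^i$ is (an extension of) the convex hull of the strictly convex connected piece $\Gamma^i$, hence carries a foliation $\mathbb{A}^i=\{\Pi_t\}$ by $2$-planes, and it suffices to show $(q_1-q_2)|_\Pi\equiv 0$ for every $\Pi\in\mathbb{A}^i$ and every $i$; here $q_1,q_2$ are two potentials with $\Lambda_{q_1}=\Lambda_{q_2}$ and $q_1=q_2=q_*$ off $U$. Fix $i$ and a plane $\Pi\in\mathbb{A}^i$, introduce Euclidean coordinates $(x_1,x_2,x_3)$ near $U^i$ with $\Pi=\{x_3=0\}$, and pick a slab $\{-t_1\le x_3\le t_2\}\subset U^i$ containing $V^i:=\mathrm{supp}(q_1-q_2)\cap U^i$. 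Because the convex hulls are pairwise disjoint, the remaining pieces $V^j=\mathrm{supp}(q_1-q_2)\cap U^j$, $j\ne i$, lie in sets disjoint from this slab, and it is exactly this disjointness that the construction must exploit.

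Suppose for a moment we have a weight $\tilde\phi_0$ (with companion $\tilde\psi_0$) enjoying all the properties used in Theorem~\ref{euclid} --- $\tilde\phi_0=x_1$ on the slab, the H\"ormander hypoellipticity condition globally on $\Omega_1$, and the resulting Carleman estimates --- and \emph{in addition} $\tilde\phi_0+\tilde\psi_0\le 0$ on a neighbourhood of each $V^j$, $j\ne i$. Then Lemma~\ref{shrodinger} and its corollary produce CGO's $u^1_\epsilon=e^{\tau\tilde\Phi_\epsilon}(v_\epsilon+r^1_\epsilon)$, $u^2_\epsilon=e^{\tau\tilde\Psi_\epsilon}(v_\epsilon+r^2_\epsilon)$ solving $(-\triangle_g+q_1)u^1_\epsilon=0$ and $(-\triangle_g+q_2)u^2_\epsilon=0$, and the Green's identity gives $0=\int_V (q_1-q_2)\,u^1_\epsilon u^2_\epsilon=\sum_{k}\int_{V^k}(q_1-q_2)\,e^{\tau(\tilde\Phi_\epsilon+\tilde\Psi_\epsilon)}(v_\epsilon+r^1_\epsilon)(v_\epsilon+r^2_\epsilon)$. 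On the slab $\tilde\Phi_\epsilon+\tilde\Psi_\epsilon=0$, so the $k=i$ term reduces, as in Theorem~\ref{euclid}, to $\int_{V^i}(q_1-q_2)v_\epsilon^2+O(\tau^{-1})$; for $k\ne i$ the amplitude $v_\epsilon$ is supported in the slab and hence vanishes on $V^k$, so that term equals $\int_{V^k}(q_1-q_2)e^{\tau(\tilde\phi_0+\tilde\psi_0)}r^1_\epsilon r^2_\epsilon$, which is $O(\tau^{-2})$ by $\|r^l_\epsilon\|_{L^2}=O(\tau^{-1})$ together with $\tilde\phi_0+\tilde\psi_0\le 0$. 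Thus the competing components drop out in the limit and we are back in the situation of Theorem~\ref{euclid}: expanding the polynomial in $\epsilon$, choosing $\chi(x_3)=t^{1/2}\chi_0(tx_3)$, letting $t\to\infty$, and perturbing $h(z)\mapsto 1+\eta h(z)$ yields $\int_\Pi(\partial_3^k q_1-\partial_3^k q_2)\bar z^k h(z)=0$ for all $k\ge 0$ and all holomorphic $h$, whence $(q_1-q_2)|_\Pi\equiv 0$ by Stone--Weierstrass. Letting $\Pi$ range over $\mathbb{A}^i$ and then $i$ over $1,\dots,l$ gives $q_1\equiv q_2$ on $U=\bigcup_iU^i$, and hence everywhere.

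It remains to build $\tilde\phi_0$ and $\tilde\psi_0$. I would start from the weight $x_1\chi_0+(F_\lambda\circ\omega)$ of Lemma~\ref{est} attached to the component $U^i$ and the plane $\Pi$, and then \emph{splice in, near each competing region $U^j$}, a local limiting Carleman weight $x_1^{(j)}$ (in a Euclidean chart covering $U^j$) shifted by a large negative constant $-C_j$, joined to the ambient weight across a cutoff $\chi_0^{(j)}$ and a damping $F^{(j)}_\lambda$ of precisely the form used in Lemma~\ref{est}. The companion $\tilde\psi_0$ is obtained by reversing the sign of every linear (limiting Carleman weight) term --- $x_1\mapsto -x_1$ on the $\Pi$-slab and $x_1^{(j)}\mapsto -x_1^{(j)}$ on each $U^j$ --- while keeping the common backbone (the $F_\lambda$-pieces and the constants $-C_j$) identical, so that $\tilde\phi_0+\tilde\psi_0$ is $0$ on the slab, twice the backbone off all Euclidean charts, and $-2C_j$ near each $V^j$; taking $C_j$ large then forces $\tilde\phi_0+\tilde\psi_0\le 0$ there. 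Since all the new linear shifts sit in charts where $g$ is Euclidean, the H\"ormander verification on those shifts and in the new transition annuli is, region by region, the one already carried out in Lemma~\ref{est}, with $\lambda$ large depending on all the data.

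The main obstacle is precisely this global H\"ormander check: a function that is affine on the open $\Pi$-slab cannot be convex and strictly negative on a disjoint set, so the descent of the backbone toward its negative values on the far components must be threaded --- through the $F_\lambda$-type regions, and where the backbone must cross the non-Euclidean part of $\Omega$ lying between the $U^i$'s --- while respecting the sign conditions imposed on $D^2\tilde\phi_0(X,X)+D^2\tilde\phi_0(\nabla\tilde\phi_0,\nabla\tilde\phi_0)$ by the second fundamental forms of the level sets. It is here that the disjointness of the $U^i$ and their configuration inside $\Omega$ enter essentially, the only genuine freedom being in the extension $\Omega_1\setminus\Omega$ and in the shape of the transition profiles. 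Once the weight is in place, the construction of the CGO's, the extraction of the planar integral data, and (with the obvious bookkeeping) the reconstruction algorithm of the previous section all go through exactly as for Theorem~\ref{euclid}.
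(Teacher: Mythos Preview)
Your reduction is correct: once a suitable pair of global weights $\tilde\phi_0,\tilde\psi_0$ is in hand, the contributions from the components $V^j$ with $j\ne i$ are $O(\tau^{-2})$ and the argument of Theorem~\ref{euclid} runs on each plane $\Pi\in\mathbb{A}^i$ exactly as you describe. But the weight you propose is not the one the paper builds, and the obstacle you flag in your last paragraph is real for your construction and is not resolved.

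The paper sidesteps that obstacle entirely by a different and simpler choice. Instead of making $\tilde\phi_0$ a limiting Carleman weight only near the chosen $U^i$ and then trying to force $\tilde\phi_0+\tilde\psi_0<0$ on the competing $V^j$, the paper makes $\tilde\phi_0$ a limiting Carleman weight on \emph{all} components at once. The key step (Lemma~\ref{global}) is to produce a single smooth $\omega:\Omega_1\to\mathbb{R}$ with $d\omega\ne 0$ everywhere and $\omega|_{U_1^j}=x_3^j$ for \emph{every} $j$; this is done by a Morse--theoretic argument, perturbing an arbitrary smooth extension to a Morse function and then sliding the finitely many critical points out of $\Omega_1$ along disjoint paths that avoid $U$. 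With this $\omega$ one takes $\chi_0\equiv 1$ and $F_\lambda\equiv 0$ on every slab and sets $\tilde\phi_0=x_1^j\chi_0+F_\lambda\circ\omega$, $\tilde\psi_0=-x_1^j\chi_0+F_\lambda\circ\omega$ globally. Then on each $V^j$ one has $\tilde\phi_0=x_1^j$, $\tilde\psi_0=-x_1^j$, so $\tilde\phi_0+\tilde\psi_0=0$ on all of $\cup_jV^j$: the exponential factor in $\int_{V^j}$ is unimodular for every $j$, and since $v_\epsilon$ is supported in the $U^i$--slab the $j\ne i$ terms are exactly $\int_{V^j}(q_1-q_2)e^{i\tau(\cdot)}r^1_\epsilon r^2_\epsilon=O(\tau^{-2})$ with no sign condition to arrange. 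The H\"ormander verification is then, region by region, literally the computation of Lemma~\ref{est}, because the weight has the same local form near every component and the convexified piece $F_\lambda\circ\omega$ connects them without any descent. In short, the missing idea is that the backbone $\omega$ can be made compatible with all the $U^j$ simultaneously; once you do that there is nothing to splice and no negativity to engineer.
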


\noindent Indeed one can see that the key in establishing this theorem is proving a similar Carleman estimate to that of Lemma ~\ref{est}. The rest of the techniques in the paper including the CGO solutions with fractional powers of $\frac{1}{\tau}$ and the reconstruction algorithms would be exactly as before. Let us now give a sketch of how one can prove a Carleman estimate in this setting.\\

\noindent First, Let us extend the manifold $\Omega$ to a slightly larger manifold $\Omega_1$. We extend $q$ to all of $\Omega_1$ by setting it equal to zero outside $\Omega$ and extend $g$ smoothly to $\Omega_1$ such that $ g|_{U_1 }$ is Euclidean. Here $U_1$ denotes the extension of convex hull of $\Gamma$ to the larger manifold $\Omega_1$. Note that $U_1=\cup_1^l \mathbb{A}_i$ where $\mathbb{A}_i$ is a foliation by a family of planes $\mathbb{A}_i=\{\Pi^i_t\}_{t \in I}.$ We start by taking a fixed family of planes $\Pi^{i} \in \mathbb{A}_i$ for all $1\leq i\leq l$ . A local coordinate system $(x^i_1,x^i_2,x^i_3)$ can be constructed in each $U_1^i$ such that $\Pi^i=\{x^i_3=0\}$ with $(x^i_1,x^i_2)$ denoting the usual Euclidean coordinate system on the plane $\Pi^i$ and $x^i_3$ denoting the normal flow to this plane. We can assume that within each component $U^i$ the support of $q$ lies in the compact set $V^i \subset\subset \{-t^i_1<x^i_3<t^i_2\}$ with $t^i_1,t^i_2>0$. In this framework $U_1^i= \cup_{c=-t^i_1-\delta^i_1}^{c=t^i_2+\delta^i_2} \{x^i_3=c\}$ with $\delta>0$. Let $\omega:\Omega_1 \to \mathbb{R}$ be any smooth function such that $d\omega \neq 0$ everywhere and $\omega(x) \equiv x^i_3$  for $ x \in U_1^i$ for each $ 1\leq i\leq l$. The existence of such $\omega$ is proved in Lemma ~\ref{global}. Let us define two globally defined $C^{k}(\overline{\Omega_1})$ functions $\chi_0 : \Omega_1 \to \mathbb{R}$ and $F_{\lambda}(x): \mathbb{R} \to \mathbb{R}$  as follows:
 \[
    \chi_0(x) = \left\{\begin{array}{lr}
        1, & \text{for }  -t^i_1<x^i_3<t^i_2\\
        (1-(\frac{x^i_3-t^i_2}{\delta^i_2})^{8k})^k, & \text{for } t^i_2 \leq x^i_3 \leq t^i_2+\delta^i_2\\
    (1-(\frac{x^i_3+t^i_1}{\delta^i_1})^{8k})^k, & \text{for } -t^i_1-\delta^i_1 \leq x^i_3 \leq -t^i_1\\
         0  & \text{otherwise }
        \end{array}\right\}
  \]

 \[
    F_{\lambda} (x) = \left\{\begin{array}{lr}
        0, & \text{for }   -t^i_1<x<t^i_2\\
        e^{\lambda (\frac{x-t^i_2}{\delta_2})^2} (\frac{x-t^i_2}{\delta^i_2})^{2k}, & \text{for } t^i_2 \leq x  \\
       e^{\lambda (\frac{x+t^i_1}{\delta_1})^2} (\frac{x+t^i_1}{\delta^i_1})^{2k}  , & \text{for }  x \leq -t^i_1\\
        \end{array}\right\}\\
\\
  \]

\noindent Motivated by Lemma ~\ref{est} we will construct a global Carleman weight in $\Omega_1$ in such a way that the phase function restricted to the support of $q$ is equivalent to a limiting Carleman weight. In the transition regions we will use a convexification technique in order to make sure that the H\"{o}rmander hypo-ellipticity condition holds.
\\

\begin{lemma}
\label{est2}

Let $ \tilde{\phi}_0(x_1,x_2,x_3) = x^i_1 \chi_0(x) + (F_{\lambda}\circ \omega)(x)$ where  $k \geq 1$ is an arbitraty integer and $\lambda(\Omega_1, k,||g_{ij}||_{C^2})$ is sufficiently large. Then the H\"{o}rmander hypo-ellipticity condition is satisfied in $\Omega_1$, that is to say:
\[ D^2\tilde{\phi}_0 (X,X) + D^2\tilde{\phi}_0(\nabla \tilde{\phi}_0,\nabla\tilde{\phi}_0) \geq 0\]
whenever $|X|=|\nabla \tilde{\phi}_0|$ and $\langle \nabla\tilde{\phi}_0, X \rangle =0$.

\end{lemma}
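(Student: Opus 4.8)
The plan is to reduce everything to the proof of Lemma~\ref{est}, which already handled a single slab; the one genuinely new ingredient is the global function $\omega$, which I would take from Lemma~\ref{global}: it is smooth, has $d\omega\neq 0$ everywhere on $\Omega_1$, and restricts to the normal coordinate $x_3^i$ on each Euclidean slab $U_1^i$. Since the components are disjoint ($U^i\cap U^j=\emptyset$) and $\chi_0$ vanishes to order $k$ along each $\partial U_1^i$, the functions $\chi_0$ and $F_\lambda\circ\omega$ --- defined slab-by-slab and extended outside --- glue into globally $C^{k-1}(\overline{\Omega_1})$ functions, so that the weight $\tilde{\phi}_0 = x_1^i\chi_0 + (F_\lambda\circ\omega)$ is well defined; I would record this gluing step first.

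Next I would partition $\Omega_1 = \big(\bigcup_{i} A_1^i\big)\cup\big(\bigcup_{i} A_2^i\big)\cup A_3$ with $A_1^i=\{-t_1^i\le x_3^i\le t_2^i\}$, $A_2^i=\{t_2^i\le x_3^i\le t_2^i+\delta_2^i\}\cup\{-t_1^i-\delta_1^i\le x_3^i\le -t_1^i\}$, and $A_3=\Omega_1\setminus U_1$. On each core $A_1^i$ the metric is Euclidean and $\tilde{\phi}_0=x_1^i$, hence $D^2\tilde{\phi}_0\equiv 0$ and the H\"{o}rmander condition is immediate. On $A_3$ one has $\chi_0\equiv 0$, so $\tilde{\phi}_0=F_\lambda(\omega)$; because $d\omega\neq 0$ on the compact manifold $\Omega_1$ the quantity $|\nabla\omega|$ is pinched between two positive constants, and the estimate is then identical to the $A_3$ computation in Lemma~\ref{est}: $D^2\tilde{\phi}_0(X,X)\le C|F'_\lambda(\omega)|^3$ with $C$ independent of $\lambda$, while $D^2\tilde{\phi}_0(\nabla\tilde{\phi}_0,\nabla\tilde{\phi}_0)$ carries the term $F'_\lambda(\omega)^2F''_\lambda(\omega)|\nabla\omega|^4$, and the bounds $|F'_\lambda(\omega)|\le C\lambda e^{\lambda(\cdot)^2}$, $F''_\lambda(\omega)\ge C\lambda^2 e^{\lambda(\cdot)^2}$ make the sum nonnegative once $\lambda$ is large depending only on $\Omega_1$, $k$ and $\|g_{ij}\|_{C^2}$.

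The transition slabs $A_2^i$ I would handle exactly as in Lemma~\ref{est}. Each $A_2^i\subset U_1^i$ is Euclidean and there $\omega=x_3^i$, so $\tilde{\phi}_0 = x_1^i\chi_0(x_3^i)+F_\lambda(x_3^i)$ is literally the function analyzed in that lemma, with the parameters $(x_1,x_3,t_1,t_2,\delta_1,\delta_2)$ replaced by $(x_1^i,x_3^i,t_1^i,t_2^i,\delta_1^i,\delta_2^i)$. Hence the two stronger claims proved there, $D^2\tilde{\phi}_0(\nabla\tilde{\phi}_0,\nabla\tilde{\phi}_0)\ge 0$ and $D^2\tilde{\phi}_0(X,X)\ge 0$ whenever $\langle\nabla\tilde{\phi}_0,X\rangle=0$, follow word for word, using Cauchy--Schwarz ($|F'_\lambda|\ge\frac{4}{\delta_2^i}\sqrt{\lambda k}\,(\cdot)^{2k}$ dominates $|x_1^i||\chi_0'|\le C(\Omega)k^2(\cdot)^{8k-1}$ for $\lambda$ large), the bound $\partial_{33}\tilde{\phi}_0\ge\tfrac12 F''_\lambda\ge\tfrac{2\lambda}{(\delta_2^i)^2}(\cdot)^{2k}$, and the convexity of $s\mapsto s^{2k}e^{\lambda s^2}$. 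Since $\lambda$ is needed large on only finitely many slabs, one can take a single $\lambda$, the maximum of the finitely many thresholds; matching across the common faces $\{x_3^i=t_2^i+\delta_2^i\}$ etc.\ is automatic, since there $\chi_0=0$ and the $A_2^i$ estimate collapses to the $A_3$ one.

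I expect the hard part not to be this inequality --- which is a bookkeeping reduction to Lemma~\ref{est} once $\omega$ is available --- but rather the production of the single globally defined, nowhere-critical $\omega$ that simultaneously agrees with every normal coordinate $x_3^i$; that is exactly the content of Lemma~\ref{global}, and it is where the hypotheses $U^i\cap U^j=\emptyset$ and $U=\bigcup_i U^i$ (so that the slabs can be laid out consistently inside $\Omega_1$ and their foliations interpolated) really enter. Beyond invoking that lemma, the only additional technical point I anticipate is checking the gluing and $C^{k-1}$ regularity of $\chi_0$ and $F_\lambda\circ\omega$ across the finitely many interfaces, which follows from the vanishing order of $\chi_0$ at each $\partial U_1^i$.
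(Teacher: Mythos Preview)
Your proposal is correct and follows exactly the paper's approach: the paper does not give a separate proof of Lemma~\ref{est2} but simply remarks that ``a detailed look at the proof of Lemma~\ref{est} suggests that the key to proving Lemma~\ref{est2} would be the existence of the function $\omega$,'' and then defers that to Lemma~\ref{global}. Your region-by-region reduction to Lemma~\ref{est}, together with taking $\lambda$ as the maximum over the finitely many slabs and invoking Lemma~\ref{global} for the globally defined nowhere-critical $\omega$, is precisely what the paper intends, and in fact spells out more of the bookkeeping than the paper itself does.
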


\noindent Indeed a detailed look at the proof of Lemma ~\ref{est} suggests that the key to proving Lemma ~\ref{est2} would be the existence of the function $\omega$:

\begin{lemma}
\label{global}
There exists a smooth function $\omega:\Omega_1 \to \mathbb{R}$ such that $d\omega \neq 0$ everywhere and $\omega(x) \equiv x^i_3$  for $ x \in U_1^i$ for each $ 1\leq i\leq l$.
\end{lemma}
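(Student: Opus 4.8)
The plan is to obtain $\omega$ by extending a partially defined submersion, using the fact that on a neighbourhood of each slab the Euclidean coordinate $x_3^i$ already \emph{is} a submersion, together with the latitude we retain in the construction of the enlargement $\Omega_1$. First I would fix pairwise disjoint open sets $N_i \supset \overline{U_1^i}$ (possible after shrinking the $U_1^i$ slightly, using $U^i\cap U^j=\emptyset$), each of which is covered by the Euclidean chart and is contractible. On $N_i$ the function $\omega_0:=x_3^i$ is a genuine submersion, $d\omega_0 = dx_3^i\neq 0$. The task is then to extend this data to a function $\omega$ on all of $\Omega_1$ with $d\omega\neq 0$ everywhere and $\omega\equiv x_3^i$ on a (smaller) neighbourhood of $\overline{U_1^i}$; since each $U_1^i$ is connected, having $d\omega = dx_3^i$ near $\overline{U_1^i}$ together with equality at one point already forces $\omega\equiv x_3^i$ there, and the relative extension statement below will in fact give equality directly.

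The heart of the matter is to guarantee that $\Omega_1$ carries \emph{some} submersion $\mu_0:\Omega_1\to\mathbb{R}$ with $d\mu_0\neq 0$ up to and including $\partial\Omega_1$. This is a purely topological condition on $\Omega_1$ and it is the only genuine obstruction in the proof; it is harmless here because the enlargement $\Omega_1\supset\Omega$, and the Euclidean extension of $g$ near $\bigcup_i\overline{U_1^i}$, are ours to choose. Concretely one may enlarge $\Omega$ (through a collar, and, if $\pi_1(\Omega)\neq 0$, attaching finitely many handles along curves kept away from the slabs) so that $\Omega_1$ is diffeomorphic to a product $\Sigma\times[0,1]$ for a compact surface $\Sigma$ with boundary; then $\mu_0(\sigma,t):=t$ works. (When $\Omega$ is a domain in $\mathbb{R}^3$ — the situation of primary interest — no such care is needed, as a linear coordinate restricted to $\Omega_1$ already serves.) I would then reconcile $\mu_0$ with the prescribed behaviour on the slabs: since each $N_i$ is contractible, the nowhere-zero $1$-forms $d\mu_0|_{N_i}$ and $dx_3^i$ are homotopic through nowhere-zero $1$-forms (both being maps $N_i\to\mathbb{R}^3\setminus\{0\}\simeq S^2$ on a contractible base), and interpolating along such a homotopy inside the corridors $N_i\setminus N_i'$, with $N_i'\subset\subset N_i$ still containing $\overline{U_1^i}$, yields a nowhere-zero $1$-form $\alpha$ on $\Omega_1$ equal to $dx_3^i$ on $N_i'$ and to $d\mu_0$ near $\partial\Omega_1$. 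Finally, applying the relative $h$-principle for submersions (Gromov--Phillips): $\alpha$ is already holonomic — the differential of an actual submersion, namely $x_3^i$ on $N_i'$ and $\mu_0$ in a collar of $\partial\Omega_1$ — on that closed set, so it is homotopic, rel that set, to $d\omega$ for a bona fide submersion $\omega:\Omega_1\to\mathbb{R}$. Holonomy in the collar keeps $d\omega\neq 0$ up to $\partial\Omega_1$, and holonomy on $N_i'$ forces $\omega\equiv x_3^i$ on $N_i'\supset U_1^i$; this is the desired function.

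The step I expect to be the true obstacle is precisely the middle one: a nowhere-critical function need not exist on an arbitrary compact $3$-manifold with boundary (for instance on a homology sphere with an open ball removed), so the argument really does exploit the freedom in choosing $\Omega_1$. Once $\Omega_1$ is arranged to admit a single submersion onto $\mathbb{R}$, the remaining steps are routine flexibility arguments, and the only point requiring attention is keeping the modification holonomic in a collar of $\partial\Omega_1$, so that $d\omega\neq 0$ is preserved all the way to the boundary (which is where the Carleman weight $F_\lambda\circ\omega$ of Lemma~\ref{est2} must still be controlled). Alternatively, one can bypass the $h$-principle by a direct hands-on construction: route embedded tubes $\Sigma_j\times[0,1]$ in $\Omega_1\setminus\bigcup_i U_1^i$ joining the interior cap faces $\{x_3^i=\text{const}\}$ of the slabs to one another and to $\partial\Omega_1$ so that the induced "direction'' data is acyclic, define $\omega$ on each tube by interpolating the (constant) end values with a small perturbation making $\partial_t\omega\neq 0$, and patch — but verifying $d\omega\neq 0$ on the leftover region reduces again to the same topological point.
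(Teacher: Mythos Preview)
Your argument via the Gromov--Phillips $h$-principle is essentially correct, but it takes a far heavier route than the paper, and one of your key worries is actually unfounded. The paper's proof is completely elementary: start with \emph{any} smooth extension $\omega_0$ of the data $\omega_0|_{U_1^i}=x_3^i$; perturb it (away from $\bigcup_i U_1^i$, where $d\omega_0$ is already nonzero) to a Morse function $\omega_1$; then, using that $\dim\Omega_1=3>2$, choose disjoint embedded arcs $\gamma_k$ from the finitely many critical points $b_k$ to points just outside $\partial\Omega_1$, avoiding $U$. In a Morse chart around each $b_k$ one adds a cut-off linear term $\epsilon(\alpha x_1+\beta x_2+\lambda x_3)\eta_k$, which simply translates the (still nondegenerate) critical point a small distance without creating new ones. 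Iterating this slides each $b_k$ along $\gamma_k$ and out of $\Omega_1$. No global flexibility theorem is invoked; the whole argument is a sequence of local linear pushes.

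Your assertion that ``a nowhere-critical function need not exist on an arbitrary compact $3$-manifold with boundary (for instance on a homology sphere with an open ball removed)'' is in fact false, and the paper's elementary argument shows why: for \emph{any} compact manifold with nonempty boundary one can Morse-perturb an arbitrary function and then slide every critical point out through the boundary along an arc (equivalently, pull back by a compactly supported isotopy carrying each $b_k$ into an exterior collar). There is no topological obstruction here, so your proposed modification of $\Omega_1$ (attaching handles to force a product structure $\Sigma\times[0,1]$) is unnecessary. This misdiagnosis does not break your proof --- you merely perform a harmless extra step --- but it does mean the ``true obstacle'' you identify is not an obstacle at all. In sum: your $h$-principle route works and is conceptually clean (separating the formal homotopy of $1$-forms from the holonomic realisation), but the paper's Morse-theoretic approach is shorter, self-contained, and sidesteps the need to control behaviour in a boundary collar or to invoke relative flexibility results.
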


\noindent Before proving Lemma ~\ref{global} let us recall Morse Lemma. This will be the key ingredient of the proof. Heuristically the idea is to start with an arbitrary smooth $\omega_0(x) \equiv x^i_3$  for $ x \in U_1^i$ for each $ 1\leq i\leq l$ and then pull out all the critical points to reach at the desired function $\omega$.

\begin{lemma}[Morse Lemma]
Let $b$ be a non-degenerate critical point of $f: \Omega_1\to \mathbb{R}$. Then there exists a chart $(x_1,x_2,x_3)$ in a neighborhood of $b$ such that $$f(x)= f(b)-x_1^2-x_2^2-...-x_{\alpha}^2 + x_{\alpha+1}^2+...+x_n^2$$ Here $\alpha$ is equal to the index of $f$ at $b$.
\end{lemma}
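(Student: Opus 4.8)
The statement to establish is the classical Morse Lemma, so the plan is simply to reproduce its standard proof: first reduce $f$, near $b$, to a ``quadratic form with smooth coefficients'' by Hadamard's lemma, then diagonalize that form by a smooth, inductive completion of squares, and finally identify $\alpha$ with the Morse index using Sylvester's law of inertia. Concretely, I would work in a coordinate chart centred at $b$, so that $b=0$ and, after subtracting a constant, $f(0)=0$; since $b$ is a critical point, $df(0)=0$. Writing $f(x)=\int_0^1\frac{d}{dt}f(tx)\,dt=\sum_i x_i g_i(x)$ with $g_i(x)=\int_0^1\partial_i f(tx)\,dt$ produces smooth $g_i$ with $g_i(0)=\partial_i f(0)=0$, and applying the same device to each $g_i$ gives smooth $g_{ij}$ with $f(x)=\sum_{i,j}x_i x_j g_{ij}(x)$; after replacing $g_{ij}$ by $\tfrac12(g_{ij}+g_{ji})$ one has $g_{ij}=g_{ji}$ and $(g_{ij}(0))=\tfrac12\,\mathrm{Hess}_0 f$, a nondegenerate symmetric matrix by hypothesis.

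The heart of the argument would be an induction on $r=0,1,\dots,n$ yielding coordinates $(y_1,\dots,y_n)$ near $0$ in which $f=\sum_{m\le r}\varepsilon_m y_m^2+\sum_{i,j\ge r+1}y_i y_j G_{ij}(y)$, with $\varepsilon_m=\pm1$, $G_{ij}=G_{ji}$ smooth, and the residual matrix $(G_{ij}(0))_{i,j\ge r+1}$ nondegenerate; the case $r=0$ is the previous paragraph. For the step from $r$ to $r+1$: since $(G_{ij}(0))_{i,j>r}$ is nondegenerate and symmetric, a linear change among the variables $y_{r+1},\dots,y_n$ (which does not disturb the first $r$ squares) arranges $G_{r+1,r+1}(0)\ne0$ — if every diagonal entry vanished, some off-diagonal entry $G_{pq}(0)\ne0$, and the substitution $y_p\mapsto y_p+y_q,\ y_q\mapsto y_p-y_q$ creates a nonzero diagonal entry. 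Writing $G:=G_{r+1,r+1}$, assumed $>0$ near $0$ (otherwise flip a sign below), set $z_i:=y_i$ for $i\ne r+1$ and
\[ z_{r+1}:=\sqrt{G(y)}\,\Bigl(y_{r+1}+\sum_{j>r+1}\tfrac{G_{r+1,j}(y)}{G(y)}\,y_j\Bigr). \]
This map has invertible differential at $0$ (triangular, with $\sqrt{G(0)}\ne0$ on the diagonal), hence is a local diffeomorphism, and completing the square gives $f=\sum_{m\le r}\varepsilon_m z_m^2+z_{r+1}^2+\sum_{i,j>r+1}z_i z_j\widehat G_{ij}(z)$, where $(\widehat G_{ij}(0))$ is the Schur complement of $G(0)$ in the old residual matrix and is therefore again nondegenerate. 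That is the assertion at level $r+1$.

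At $r=n$ the residual sum is empty, so there is a chart with $f=\sum_i\varepsilon_i y_i^2$; reordering the coordinates so that the $\alpha:=\#\{i:\varepsilon_i=-1\}$ negative signs come first gives $f=f(b)-y_1^2-\cdots-y_\alpha^2+y_{\alpha+1}^2+\cdots+y_n^2$. To identify $\alpha$ with the index I would note that in this chart $\mathrm{Hess}_b f=\mathrm{diag}(-2,\dots,-2,2,\dots,2)$ has exactly $\alpha$ negative eigenvalues, while passing between charts conjugates the Hessian by an invertible matrix and hence preserves its signature (Sylvester's law of inertia); thus $\mathrm{Hess}_b f$ has $\alpha$ negative eigenvalues in any chart, i.e. $\alpha$ is the Morse index of $f$ at $b$.

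The step I expect to be the only genuinely delicate point is the inductive passage above: handling the degenerate case in which every diagonal entry of the residual quadratic form vanishes (which forces the preliminary linear substitution $y_p\mapsto y_p+y_q$, $y_q\mapsto y_p-y_q$), and then checking carefully that the nonlinear square-completing substitution really is a local diffeomorphism and that the new residual form stays nondegenerate. The remaining ingredients — Hadamard's lemma and the signature argument — are routine.
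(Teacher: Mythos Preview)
Your proof is a correct and complete rendition of the standard (Milnor-style) argument for the Morse Lemma. However, note that the paper does not actually prove this statement: the Morse Lemma is merely quoted as a classical ingredient used inside the proof of Lemma~\ref{global}, with no argument supplied. So there is no ``paper's own proof'' to compare against --- you have simply filled in a result the paper takes for granted, and your treatment (Hadamard's lemma, inductive completion of squares with the Schur-complement observation, and Sylvester's law of inertia for the index) is the textbook one.
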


\begin{proof}[Proof of Lemma ~\ref{global}]
 Define $\omega_0: \Omega_1 \to \mathbb{R}$ such that $\omega_0(x) \equiv x^i_3$  for $ x \in U_1^i$ for each $ 1\leq i\leq l$. Let us remind the reader that the $(x^i_1,x^i_2,x^i_3)$ are essentially the locally well defined Fermi coordinates near the planes $\Pi^i$.
We know that a Generic smooth function is Morse and therefore it has isolated critical points. Thus by using a small $C^{\infty}$ purturbation we can find a smooth function $\omega_1(x)$ such that $\omega_1(x) \equiv x^i_3$  for $ x \in U_1^i$ for each $ 1\leq i\leq l$ and $\omega_1(x)$ has isolated critical points and thus by compactness a finite number of isolated critical points $b_k$ for $1 \leq k \leq L$. We will assume without loss of geneality that the index of these critical points is zero.\\
Since $\dim \Omega_1=3>2$, we can connect these critical points with points just outside the boundary by a family of disjoint paths that do not intersect $U$. We will denote these curves by $\gamma_k$.\\
Let $V_k$ denote the neighborhood around $b_k$ for which the Morse lemma holds. Choose $h$ small enough such that the geodesic ball of radius $h$ around $b_k$ is inside $V_k$ namely $B_{b_k}(h) \subset V_k$. Take $$\omega_2(x)= \omega_1(x) + \epsilon (\alpha x_1 + \beta x_2 +\lambda x_3) \eta_k(x)$$ where $\eta_k$ is a smooth function compactly supported in $V_k$ and such that $\eta_k \equiv 1 $ in the ball $B_{b_k}(\frac{h}{2})$. It is clear that for $\epsilon$ small enough we still have that $\omega_2(x) \equiv x^i_3$  for $ x \in U_1^i$ for each $ 1\leq i\leq l$ . Furthermore we can see that for $\epsilon$ small enough the critical points of $\omega_2$ outside $V_k$ will remain the same and the critical point of $\omega_2$ inside $V_k$ must be in the ball $B_{b_k}(\frac{h}{2})$. Hence the critical point in $V_k$ will 'move' from $b_k$ to the point with local coordinates $(x_1,x_2,x_3) = (\frac{\epsilon \alpha}{2},\frac{\epsilon \beta}{2},\frac{\epsilon \lambda}{2})$. Since $\Omega_1$ is compact, it is clear that we can 'move' the critcal points $b_k$ along their respective curves $\gamma_k$ and essentially construct a smooth function $\omega$ with $\omega(x) \equiv x^i_3$  for $ x \in U_1^i$ for each $ 1\leq i\leq l$ and such that $|d\omega|_g \neq 0$ anywhere in $\Omega_1$. \\
\end{proof}

\noindent With the proof of Lemma ~\ref{global} complete we can deduce easily that Lemma ~\ref{est2} must also hold. This in turn implies that we can construct CGO solutions with fractional powers in the semiclassical parameter $\frac{1}{\tau}$ as in the previous section concentrating on the planes $\Pi^i$. One can also use the techniques in the previous sections to obtain the trace of these CGO solutions on the boundary through obtaining a Fredholm type boundary integral equation and thus give a reconstruction. \\
\bigskip

\bigskip

\noindent Let us now discuss the generalization to CTA geometries. In a sense this is the most general statement one could hope for, given the present tools in this paper. Before stating the Theorem, let us explain some notions. We will assume that $\Omega = I \times \Omega_0$ where $I=[a,b]$ is a compact interval. Let $U_0 \subset \Omega_0$ and define $U=I \times U_0$. Suppose that $(U, g|_U)$ is a CTA geometry, that is to say there exists a coordinate chart such that $g|_U =c(x)(dx_1^2 + g_0(x'))$. We have the following:

\begin{theorem}
\label{cta}
Let $(\Omega^3,g)$ denote a compact smooth Riemannian manifold with smooth boundary with $\Omega=I \times \Omega_0$. Let $U_0$ be an open subset such that $\Gamma_0=U_0 \cap \partial \Omega_0$ is non-empty, strictly convex and that $U_0$ is the convex hull of $\Gamma_0$. Suppose that $(\partial I \times U_0) \cup (I \times \partial U_0)$ is connected, and let $U= I \times U_0$. Suppose $q$ is a smooth function that is explicitly known in $U^c$, and that $U$ can be covered with a coordinate chart in which $(U, g|_U)$ is conformally transversally anisotropic. Then the knowledge of $\Lambda_q$ will uniquely determine $q$ in $U$ provided that $U_0$ is simple  and the geodesic transform on $\Omega_0$ is locally injective.\\ 
\end{theorem}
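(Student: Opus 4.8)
The plan is to rerun the four stages behind Theorem~\ref{euclid} --- global Carleman estimate, complex geometric optics, uniqueness, reconstruction --- replacing the flat local limiting Carleman weight $x_1$ by the CTA limiting weight and replacing the plane concentration of Sections~5--6 by concentration on geodesics of the transversal \emph{surface} $\Omega_0$ (note $\dim\Omega_0=2$). First I would extend $\Omega$ to $\Omega_1=I_1\times\Omega_{0,1}$, extend $q$ by $q_*$ outside $\Omega$ and extend $g$ so that $g|_{U_1}=c(x)(dx_1^2+g_0(x'))$ stays conformally transversally anisotropic on $U_1=I_1\times U_{0,1}$. In the given chart, $\phi_0(x)=x_1$ is a limiting Carleman weight for $g|_{U_1}$: a short computation using the conformal transformation rule for $\triangle_g$ from Section~4 shows $D^2\phi_0(X,X)+D^2\phi_0(\nabla\phi_0,\nabla\phi_0)=0$ whenever $|X|=|\nabla\phi_0|$ and $\langle\nabla\phi_0,X\rangle=0$, which is the equality (boundary) case of the H\"ormander condition. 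Using that $U_{0,1}$ is simple I would equip it with semigeodesic coordinates $(x_2,x_3)$ transverse to the geodesics issuing from $\Gamma_0$, take a global $\omega$ with $d\omega\neq0$ extending $x_3$ (constructed as in Lemma~\ref{global} when $\Gamma$ has several components), and set $\tilde\phi_0=x_1\chi_0(x)+(F_\lambda\circ\omega)(x)$ with $\chi_0,F_\lambda$ the cutoff and convexifier of Section~4. The analogue of Lemma~\ref{est} then goes through with one new feature: in the transition region the background metric is CTA rather than flat, so the cross terms $D^2\tilde\phi_0(\partial_2,\cdot)$ no longer vanish identically and one picks up Christoffel contributions of $g_0$ and derivatives of $c$; but these are $\lambda$-independent and are therefore absorbed by the $O(\lambda)$ and higher positive contributions of $F_\lambda''$ in the $\nabla\tilde\phi_0$ direction, exactly the convexity mechanism of Lemma~\ref{est}. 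The hypothesis that $(\partial I\times U_0)\cup(I\times\partial U_0)$ is connected is what permits a single global weight, the several-component case being handled as in Theorem~\ref{euclid2}. The renormalized-metric argument of Section~4 then upgrades hypoellipticity to a global Carleman estimate for $\tilde\phi_0$ and for $\tilde\psi_0=-x_1\chi_0+F_\lambda\circ\omega$.

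Next, as in Section~5, set $\epsilon=\tau^{-\beta}$ with $0<\beta<1$ fixed, $M>2/\beta$, and build complex phases $\tilde\Phi_\epsilon$ with $\Re\tilde\Phi_\epsilon=\tilde\phi_0$ together with amplitudes $v_\epsilon=\sum_{k=0}^{M}v_k\epsilon^k$ so that $\langle d\tilde\Phi_\epsilon,d\tilde\Phi_\epsilon\rangle_g=O(\tau^{-2})$ and $2\langle d\tilde\Phi_\epsilon,dv_\epsilon\rangle_g+(\triangle_g\tilde\Phi_\epsilon)v_\epsilon=O(\tau^{-2})$ on the support of $q-q_*$. Over $U_1$ the leading phase is $x_1+i\varphi(x')$, where $\varphi$ solves the transversal eikonal equation $|\nabla_{g_0}\varphi|_{g_0}^2=1$ --- concretely the distance function $\varphi(x')=d_{g_0}(p,x')$ from a point $p$ just outside $U_{0,1}$, smooth because $U_{0,1}$ is simple --- and the leading amplitude is $v_0=c^{-1/4}|g_0|^{-1/4}a(x')e^{i\lambda x_1}$, where $a$ solves the transport equation $2\langle d\varphi,da\rangle_{g_0}+(\triangle_{g_0}\varphi)a=0$ along the geodesics emanating from $p$ and $\lambda\in\mathbb{R}$ is a free Fourier parameter; the $\epsilon$-corrections $\Phi_k,v_k$ are determined iteratively exactly as in Section~5. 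Then Lemmas~\ref{construction new}, \ref{harmonic} and~\ref{shrodinger} and its corollary carry over, producing $u_\epsilon^1=e^{\tau\tilde\Phi_\epsilon}(v_\epsilon+r_\epsilon^1)$ solving $(-\triangle_g+q_1)u_\epsilon^1=0$ and $u_\epsilon^2=e^{\tau\tilde\Psi_\epsilon}(v_\epsilon+r_\epsilon^2)$ solving $(-\triangle_g+q_2)u_\epsilon^2=0$, with $\|r_\epsilon^i\|_{L^2(\Omega_1)}=O(\tau^{-1})$.

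For uniqueness, suppose $\Lambda_{q_1}=\Lambda_{q_2}$ with $q_1\equiv q_2$ on $U^c$ and set $q=q_1-q_2$. Green's identity gives $\int_U q\,u_\epsilon^1u_\epsilon^2\,dV_g=0$; expanding in $\epsilon$ and sending $\tau\to\infty$ one extracts, coefficient by coefficient, the integral identities of Section~6, now for $q$ tested against $e^{i\lambda x_1}$ and derivatives of the transport amplitude. Integrating in $x_1$ recasts these as statements about the $x_1$-Fourier transform $\widehat{cq}(\lambda,\cdot)$ on $U_0$, and concentrating $a$ in the angular variable around $p$ (the role of $\chi(x_3)=t^{1/2}\chi_0(tx_3)$ in Section~6) localises each identity to the integral along a single geodesic through $p$; letting $p$, and hence the geodesic, range over all geodesics of $\Omega_0$ with endpoints on $\partial\Omega_0\cap\overline{U_0}$ and invoking the assumed local injectivity of the geodesic ray transform together with Fourier inversion in $x_1$, one concludes $\widehat{cq}(\lambda,\cdot)|_{U_0}=0$ for all $\lambda$, hence $q|_U\equiv0$. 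The reconstruction of Section~7 transfers with no essential change: form the symmetric right inverse $K_\tau=H_\tau+\pi_\tau L_\tau^{*}$ of $P_\tau=e^{-\tau\tilde\phi_0}(\triangle_g-q_*)e^{\tau\tilde\phi_0}$, derive the Fredholm boundary equation $(I-\Gamma_\tau)(u_1|_{\partial\Omega})=u_0|_{\partial\Omega}$, and solve it using Lemma~\ref{unique continuation}, whose Cauchy--Kovalevskaya step is justified once the CTA chart is chosen with $c$ and $g_0$ real-analytic near $\partial V$ (or replaced by a Runge approximation argument), the remaining continuation being a standard one for second-order elliptic operators.

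The step I expect to be the main obstacle is the CTA version of Lemma~\ref{est}: verifying H\"ormander hypoellipticity of $\tilde\phi_0$ when the transition-region metric carries the curvature of $g_0$ and the derivatives of $c$. In Theorem~\ref{euclid} the flatness there was used repeatedly --- it annihilated $D^2\tilde\phi_0(\partial_2,\cdot)$ and simplified every estimate --- so one must check carefully that all the new $\lambda$-independent terms are genuinely dominated by the convexifier in every relevant direction, and in particular along the degenerate directions where $\tilde\phi_0$ is constant, where one must use the H\"ormander \emph{sum} rather than its summands separately. A secondary difficulty is threading the eikonal/transport construction of the transversal amplitude through the $\epsilon=\tau^{-\beta}$ fractional-power ansatz so that the eikonal and transport residuals remain $O(\tau^{-2})$ on the support of $q-q_*$ while still allowing the angular concentration in the $\tau\to\infty$ limit; this is precisely where simplicity of $U_0$ and local injectivity of the geodesic ray transform on $\Omega_0$ are used essentially.
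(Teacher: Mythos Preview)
Your overall architecture is right, but the paper's proof differs from your plan in two substantive ways, and one of them closes precisely the gap you flagged as ``the main obstacle.''

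\textbf{The transition region.} Your proposal to absorb the new Christoffel terms by the $\lambda$-dependent convexifier does not work as stated. The dangerous term is $D^2\tilde\phi_0(\partial_2,\partial_2)=-\Gamma^3_{22}\,\partial_3\tilde\phi_0$, and since $\partial_2\perp\nabla\tilde\phi_0$ you must control it via the H\"ormander \emph{sum}. But near the inner edge of the transition region both $F_\lambda'$ and $F_\lambda''$ vanish to high order in the normal variable while $\chi_0\approx 1$, so the positive contribution $(\partial_3\tilde\phi_0)^2\partial_{33}\tilde\phi_0$ vanishes much faster than $|\nabla\tilde\phi_0|^2\cdot|\Gamma^3_{22}\partial_3\tilde\phi_0|$; no choice of $\lambda$ rescues this. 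The paper avoids the issue geometrically: it does \emph{not} build the transition region off an arbitrary level hypersurface, but first selects strictly convex and concave curves $\tilde\gamma_2,\tilde\gamma_1$ in $U_0$ just inside $\gamma_2,\gamma_1$, takes Fermi coordinates about $I\times\tilde\gamma_2$ (resp.\ $I\times\tilde\gamma_1$), and observes that convexity gives $\partial_3\rho<0$, hence $-\Gamma^3_{22}\partial_3\tilde\phi_0\ge 0$ outright. With that sign secured, the remaining terms reduce to the Euclidean computation verbatim, and claims (1) and (2) of Lemma~\ref{est} hold separately again. So the fix is a geometric choice of where to start the transition, not an analytic absorption.

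\textbf{The CGO construction.} The paper drops the fractional-power ansatz $\epsilon=\tau^{-\beta}$ entirely in the CTA setting. After reducing to $c\equiv 1$ it takes polar normal coordinates $(r,\theta)$ about a point $p$ just outside $\Omega_{01}$ (simplicity of $U_0$ makes these global on the support of $q-q_*$), sets $\Phi=\tilde\phi_0+ir$ and $v_0=c^{1/4}h(x_1+ir)\chi(\theta)$ with $h$ holomorphic, and pairs the CGO $u_1=e^{\tau\Phi}(v_0+r_1)$ against $w=e^{-\tau\Phi}v_0$ (not against a second CGO). Choosing $\chi$ to approximate a $\delta$ in $\theta$ and $h(z)=e^{-i\lambda z}$ yields $\int_\gamma \widehat q(\lambda,r,\theta)e^{\lambda r}\,dr$ for every admissible geodesic $\gamma$; local injectivity of the geodesic ray transform plus analyticity of the Fourier transform of a compactly supported function then forces $q\equiv 0$. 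Your $\epsilon$-expansion route is not wrong, but it is unnecessary here: the role that the $\bar z^k$ hierarchy and Stone--Weierstrass played in the Euclidean case is taken over directly by the ray transform, so there is nothing to be gained from the fractional powers and a fair amount of bookkeeping to be lost.
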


\begin{remark}
\noindent The proof of this theorem is not given in its entirety as there will be several overlaps with the Euclidean case. In particular we will prove the Carleman estimate and construct the CGO solutions. We will also show how the CGO solutions will yield the uniqueness of potential. We will however omit the boundary reconstruction algorithm of the CGO solutions as that will be exactly as in the Euclidean case. 
\end{remark}

\begin{remark}
This proof can in turn be extended as in the Euclidean case to potentials known outside a multiply connected region to provide a generalization of Theorem ~\ref{euclid2}.
\end{remark}

\noindent Let us first observe that the Laplace operator in three dimensions transforms under the following law for conformal rescalings of the metric:
\[ \triangle_{fg} u = f^{-\frac{5}{4}} ( \triangle_g + q_f)(f^{\frac{1}{4}}u)\]
\[q_f = f^{\frac{1}{4}}\triangle_{fg} f^{-\frac{1}{4}}\]

\noindent Using this, we see that without loss of generality we can assume that in $U$ the metric $g$ takes the following form:
\[ g = dx_1^2 + g_0(x')\]
\noindent In other words, without loss of generality we can assume $c(x) \equiv 1$. We will also assume without loss of generality that $q_* \equiv 0$. 
\bigskip

\noindent First, Let us extend the manifold $\Omega_0$ to a slightly larger manifold $\Omega_{01}$ and let $\Omega_1= I \times \Omega_{01}$. We extend $q$ to all of $\Omega_1$ by setting it equal to zero outside $\Omega$ and extend $g$ smoothly to $\Omega_1$ such that $ g|_{U_1 }=dx_1^2+g_0(x')$. Here $U_1=I \times U_{10}$ denotes the extension of $U$ to the larger manifold $\Omega_{1}$. Set $\Gamma_1 = U_1 \cap \partial \Omega_{01}$ and note that $U_{10}$ is the convex hull of $\Gamma_1$. $\partial U_{10}$ consists of 4 curves. There will be two segments shared with $\partial \Omega_1$ and two geodesics $\gamma_1$ and $\gamma_2$. One should think of $\gamma_2$ as being above $\gamma_1$ according to the orientation of the manifold. Let us choose $\tilde{\gamma_2}$ to be a strictly convex curve just below $\gamma_2$ and take $\tilde{\gamma}_1$ to be a strictly concave curve just above $\gamma_1$. These curves will both exists as they can just be taken to be curves with sufficiently small constant mean curvatures. Existence of such curves will then be immediate as they are governed by second order ordinary differential equations. Let this region be denoted by $V_0$ and let $V:=I \times V_0$. We note that the support of the potential $q$ lies in the set $V$. We will construct a phase function that will essentially be a limiting Carleman weight in this region and then transitions to a globally well defined smooth function in such a way that the H\"{o}rmander hypoellipticity condition is satisfied. We will denote the region outside of $V$ and above $I \times \tilde{\gamma}_2$ by $W_u$ and the other remaining region outside of $V$ and below $\tilde{\gamma}_1$ by $W_l$.
\noindent Let us first construct the local Fermi coordinates $(x_1,x_2,x_3)$ about the surface $I \times \tilde{\gamma}_2$ and $(x_1,y_2,y_3)$ about $I \times \tilde{\gamma}_1$. 

\noindent Note that near $I \times \tilde{\gamma}_2$ we have :
\[ g = dx_1^2 + dx_3^2 + \rho(x_2,x_3) dx_2^2 \] 

\noindent and near $I \times \tilde{\gamma}_1$:
\[ g = dx_1^2 + dy_3^2 + \tilde{\rho}(y_2,y_3) dy_2^2\]

\noindent The convexity conditions imply that locally near the two surfaces we have:
$$ \partial_3 \rho <0$$
and:
$$ \partial_3 \tilde{\rho} >0$$

\noindent Let $\delta_1,\delta_2$ be sufficiently small and define $\omega:\Omega_1 \to \mathbb{R}$ to be any smooth function such that $d\omega \neq 0$ everywhere,  $\omega(x) \equiv x_3$  for $ x \in W_u \cap \{0 \leq x_3 \leq \delta_2\}$ and $\omega(x) \equiv y_3$  for $ x \in W_l \cap \{-\delta_1 \leq y_3 \leq 0\}$. \\ 
Define the two functions $\chi_0$ and $F_{\lambda}$ as follows:
 \[
    \chi_0(x) = \left\{\begin{array}{lr}
        1, & \text{for }  x \in V\\
        (1-(\frac{x_3}{\delta_2})^{8k})^k, & \text{for } 0 \leq x_3\leq\delta_2\\
    (1-(\frac{y_3}{\delta_1})^{8k})^k, & \text{for } -\delta_1 \leq y_3 \leq 0\\
         0  & \text{otherwise }
        \end{array}\right\}
  \]

 \[
    F_{\lambda} (x) = \left\{\begin{array}{lr}
        0, & \text{for }   x \in V\\
        e^{\lambda (\frac{\omega(x)}{\delta_2})^2} (\frac{\omega(x)}{\delta_2})^{2k}, & \text{for } x \in W_u \\
       e^{\lambda (\frac{\omega(x)}{\delta_1})^2} (\frac{\omega(x)}{\delta_1})^{2k}  , & \text{for }  x\in W_l\\
        \end{array}\right\}\\
\\
  \]




\begin{lemma}
\label{est cta}

Let $ \tilde{\phi}_0(x_1,x_2,x_3) = x_1 \chi_0(x) + F_{\lambda}(x)$ where  $k \geq 1$ is an arbitraty integer and $\lambda(\Omega_1, k,||g_{ij}||_{C^2})$ is sufficiently large. Then the H\"{o}rmander hypo-ellipticity condition is satisfied in $\Omega_1$, that is to say:
\[ D^2\tilde{\phi}_0 (X,X) + D^2\tilde{\phi}_0(\nabla \tilde{\phi}_0,\nabla\tilde{\phi}_0) \geq 0\]
whenever $|X|=|\nabla \tilde{\phi}_0|$ and $\langle \nabla\tilde{\phi}_0, X \rangle =0$.\\
\end{lemma}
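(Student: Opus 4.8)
The plan is to follow the blueprint of Lemma~\ref{est}, decomposing $\Omega_1$ into three kinds of regions and checking the H\"ormander condition in each: the region $V$ carrying the support of $q$; the two transition layers $W_u\cap\{0\le x_3\le\delta_2\}$ and $W_l\cap\{-\delta_1\le y_3\le 0\}$; and the remaining ``outer'' part of $W_u\cup W_l$. In $V$ one has $\tilde\phi_0=x_1$ and $g=dx_1^2+g_0(x')$, so $\partial_1$ is a parallel (indeed Killing) vector field; hence $D^2\tilde\phi_0=\nabla dx_1\equiv 0$ and the H\"ormander inequality holds there with equality. This is exactly the statement that $x_1$ is a limiting Carleman weight on a CTA manifold, and it is what makes the whole construction possible in this geometry. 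On the outer part of $W_u\cup W_l$ one has $\chi_0\equiv0$, so $\tilde\phi_0=F_\lambda\circ\omega$ and the level sets of $\tilde\phi_0$ are those of $\omega$; the argument is then verbatim that of region $A_3$ in Lemma~\ref{est}: $D^2\tilde\phi_0(X,X)\le C|F'_\lambda(\omega)|^3$ with $C$ independent of $\lambda$, while $D^2\tilde\phi_0(\nabla\tilde\phi_0,\nabla\tilde\phi_0)$ carries the term $2F'_\lambda(\omega)^2F''_\lambda(\omega)|\nabla\omega|^4$, and since $|F'_\lambda|\lesssim\lambda e^{\lambda(\cdot)^2}$ while $F''_\lambda\gtrsim\lambda^2e^{\lambda(\cdot)^2}$ on $W_u\cup W_l$, taking $\lambda$ large (depending only on $(\Omega_1,g)$ and $k$) makes the latter dominate.

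The substantive part is the two transition layers, and I would treat them by the following computation near $I\times\tilde\gamma_2$ (the layer near $I\times\tilde\gamma_1$ being symmetric with $(x_1,y_3)$ and $\tilde\rho$ in place of $(x_1,x_3)$ and $\rho$). In the Fermi coordinates the metric is $g=dx_1^2+dx_3^2+\rho(x_2,x_3)\,dx_2^2$ and $\tilde\phi_0=x_1\chi_0(x_3)+F_\lambda(x_3)$ depends only on $(x_1,x_3)$. A short Christoffel computation shows that the only Christoffel symbol that can pair with $d\tilde\phi_0$ is $\Gamma^3_{22}=-\tfrac12\partial_3\rho$, so the Hessian components $D^2\tilde\phi_0(\partial_i,\partial_j)$ with $i,j\in\{1,3\}$ coincide verbatim with the flat ones computed in region $A_2$ of Lemma~\ref{est}, while the genuinely new datum is
\[ D^2\tilde\phi_0(\partial_2,\partial_2)=-\Gamma^3_{22}\,\partial_3\tilde\phi_0=\tfrac12(\partial_3\rho)\,\partial_3\tilde\phi_0. \]
Setting $Y=\partial_3\tilde\phi_0\,\partial_1-\chi_0\,\partial_3$, the orthogonal complement of $\nabla\tilde\phi_0$ is $\spn\{\partial_2,Y\}$, the cross term $D^2\tilde\phi_0(\partial_2,Y)$ vanishes thanks to the block form of $g$ and $\partial_2\tilde\phi_0=0$, and the quantities $D^2\tilde\phi_0(Y,Y)$ and $D^2\tilde\phi_0(\nabla\tilde\phi_0,\nabla\tilde\phi_0)$ are rendered nonnegative for $\lambda$ large by the same Cauchy--Schwarz manipulations as in Lemma~\ref{est} (using $|x_1\chi_0'|\le\tfrac12|F'_\lambda|$ and $x_1\chi_0''+F''_\lambda\ge\tfrac12F''_\lambda\ge 0$). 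It then remains only to pin down the sign of the scalar $\tfrac12(\partial_3\rho)\,\partial_3\tilde\phi_0$: for $\lambda$ large $\partial_3\tilde\phi_0$ has the sign of $F'_\lambda$ (fixed once the orientation of $\omega$ is fixed) and vanishes to order $(x_3)^{2k-1}$ at $\{x_3=0\}$, and the strict convexity of $\tilde\gamma_2$ (respectively strict concavity of $\tilde\gamma_1$), which is what dictates the sign of $\partial_3\rho$ in the layer, should be arranged precisely so that the product $(\partial_3\rho)\,\partial_3\tilde\phi_0\ge 0$ there; the ``sufficiently small constant mean curvature'' clause then keeps $|\partial_3\rho|$ uniformly small across the layer, which is convenient when combining this term with the others.

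I expect the main obstacle to be exactly this last step -- verifying that the orientation of the Fermi normal $x_3$, the sign of $F'_\lambda$ in the layer, and the chosen convexity of $\tilde\gamma_2$ (together with the opposite concavity of $\tilde\gamma_1$) are mutually compatible, so that $D^2\tilde\phi_0(\partial_2,\partial_2)\ge 0$ holds uniformly, while simultaneously the $(x_1,x_3)$-plane terms keep surviving the $\lambda\to\infty$ argument. In other words, the delicate point is that convexifying in the $x_3$-direction via $F_\lambda$ must not clash with the intrinsic geometric convexity of the separating hypersurfaces $I\times\tilde\gamma_1$ and $I\times\tilde\gamma_2$; once the signs are aligned, all three regions yield the H\"ormander inequality and Lemma~\ref{est cta} follows, after which the rest of the argument (CGO construction, uniqueness, boundary reconstruction) proceeds as in the Euclidean case.
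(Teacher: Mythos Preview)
Your proposal is correct and follows essentially the same route as the paper: the same region decomposition ($V$, the two transition layers in Fermi coordinates, and the outer part handled by the $A_3$ argument of Lemma~\ref{est}), the same Christoffel computation showing that only $\Gamma^3_{22}$ enters so that the $(x_1,x_3)$-block of the Hessian is verbatim Euclidean, and the same use of the convexity of $\tilde\gamma_2$ (resp.\ concavity of $\tilde\gamma_1$) to fix the sign of $D^2\tilde\phi_0(\partial_2,\partial_2)=-\Gamma^3_{22}\,\partial_3\tilde\phi_0$. If anything, you are more explicit than the paper in flagging the orientation/sign compatibility as the delicate point; the paper simply asserts that the convexity assumption gives $D^2\tilde\phi_0(\partial_2,\partial_2)\ge 0$.
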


\begin{proof}[Proof of Lemma ~\ref{est cta}]
 We will consider the the five regions $V$ , $ A=\{  0 \leq  x_3 \leq \delta_2 \}$, $B =\{-\delta_1\leq y_3\leq 0\}$, $W_u \setminus A$ and $W_l \setminus B$ and prove the inequality holds in all these regions. Regions $B$ and $W_l \setminus B$ can be handled in the exact same manner as regions $A$ and $W_u \setminus A$ respectively so we only focus on the three regions $V$, $A$ and $W_u \setminus A$.
Let us first consider $V$. Note that in this region $ \tilde{\phi}_0(x_1,x_2,x_3) = x_1$ and since the metric is conformally transversally anisotropic in this region we can use the result in \cite{DKSU} to deduce that the H\"{o}rmander condition is satisfied in $A$. This is due to the fact that $x_1$ is a limiting Carleman weight in this region. \\

\noindent The region denoted by $W_u\setminus A$. This region can be handled in the exact same manner as in Lemma ~\ref{est} due to the convexification method implemented in the design of the function $F_{\lambda}$. We will now focus on the region denoted by $A$;

\noindent Recall that in Riemannian geometries:
\[ Hess(f) := D^2 f = (\partial_j \partial_k f - \Gamma_{jk}^l\partial_l f) dx^j \otimes dx^k\]
\noindent Where $\Gamma_{jk}^l$ denotes the Christoffel symbol defined through:
\[ \Gamma_{jk}^l = \frac{1}{2} g^{lm}(\partial_k g_{mj} +\partial_j g_{mk}-\partial_m g_{jk})\]

\noindent Note that in the transition region $A$ the only non-zero terms in Christoffel symbol are $\Gamma^2_{22},\Gamma^2_{23}, \Gamma^3_{22}$. Also note that $\nabla \tilde{\phi}_0 = \partial_1 \tilde{\phi}_0 \partial_1 + \partial_3 \tilde{\phi}_0 \partial_3$. Hence:

$$  D^2\tilde{\phi}_0(\nabla \tilde{\phi}_0,\nabla \tilde{\phi}_0) = (\partial_3 \tilde{\phi}_0)^2 \partial_{33} \tilde{\phi}_0 + 2 \partial_1 \tilde{\phi}_0 \partial_3 \tilde{\phi}_0 \partial_{13}\tilde{\phi}_0 $$

\noindent Indeed one can see that this is exactly the same expression as in the Euclidean setting and thus we have:

$$D^2\tilde{\phi}_0(\nabla \tilde{\phi}_0,\nabla \tilde{\phi}_0) \geq 0$$

\noindent Let us now analyze the term  $ D^2\tilde{\phi}_0 (X,X) $ for all $X$ with $ \langle \nabla \tilde{\phi}_0, X \rangle =0$
\\
Note that $d\tilde{\phi}_0(X)=0$ implies that:
$$ X \in \spn \{ \partial_2,Z \}$$
Where $Z= \partial_3 \tilde{\phi}_0 \partial_1 - \partial_1 \tilde{\phi}_0 \partial_3$. In this region we have the following:
$$ D^2\tilde{\phi}_0 (\partial_2,\partial_1)=0$$
$$ D^2\tilde{\phi}_0 (\partial_2,\partial_2)=- \Gamma_{22}^3 \partial_3 \tilde{\phi}_0$$ 
$$ D^2\tilde{\phi}_0 (\partial_2,\partial_3)= 0  $$

\noindent One should note that the convexity assumption on $\gamma_2$ implies that :
$$ D^2\tilde{\phi}_0 (\partial_2,\partial_2) \geq 0 $$

\noindent Note that:
$$D^2\tilde{\phi}_0 (\partial_3 \tilde{\phi}_0 \partial_1 - \partial_1 \tilde{\phi}_0 \partial_3,\partial_3 \tilde{\phi}_0 \partial_1 - \partial_1 \tilde{\phi}_0 \partial_3) = (\partial_1 \tilde{\phi}_0)^2 \partial_{33} \tilde{\phi}_0- 2\partial_1 \tilde{\phi}_0 \partial_3 \tilde{\phi}_0 \partial_{13}\tilde{\phi}_0-\Gamma_{33}^3 (\partial_1 \tilde{\phi}_0)^2\partial_3\tilde{\phi}_0$$

\noindent Again, the reader can see that this expression is exactly as in the Euclidean setting and thus the exact same argument applies here to coclude that:
$$D^2\tilde{\phi}_0 (\partial_3 \tilde{\phi}_0 \partial_1 - \partial_1 \tilde{\phi}_0 \partial_3,\partial_3 \tilde{\phi}_0 \partial_1 - \partial_1 \tilde{\phi}_0 \partial_3) \geq 0$$

\end{proof}

\noindent With the proof of Lemma ~\ref{est cta} now complete, one can proceed with construction of the CGO solutions as follows. Let $p$ be a point just outside $\Omega_{01}$ such that the geodesic $\gamma$ emanating from $p$ will be in the region between $\tilde{\gamma_1}$ and $\tilde{\gamma_2}$. Let us choose the normal coordinate system $(r,\theta)$ about $p$ so that we have the following:
$$ g = dx_1^2 + dr^2 + c(r,\theta) d\theta^2$$

\noindent Let us define $\Phi = \tilde{\phi}_0 + i r$. We also define $v_0 = c^{\frac{1}{4}} h(x_1+ir) \chi(\theta)$ where $h$ is an arbitrary holomorphic function and $\chi$ is an arbitrary function of compact support near $\gamma$. We have the following two Lemmas which are exact parallels to Lemma ~\ref{harmonic} and Lemma ~\ref{shrodinger}.

\begin{lemma}
\label{harmonic2}
There exists a family of exact solutions $u_0$ to $-\triangle_g u_0=0$ of the form $ u_0 = e^{\tau \Phi} (v_0 + r_0)$
where $\|r_0\|_{L^2(\Omega_1)} \leq \frac{C}{\tau}$.
\end{lemma}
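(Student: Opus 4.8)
The plan is to mirror the proof of Lemma~\ref{harmonic}, replacing the Euclidean WKB computation with its CTA analogue and using the Carleman estimate of Lemma~\ref{est cta} (together with its consequence, the solution operator $H_\tau$ with the $\tau^{-1}$ gain) in place of Lemma~\ref{construction new}. First I would verify that $\Phi = \tilde\phi_0 + ir$ is a complex phase satisfying the eikonal and transport conditions on the support of $v_0$: in the region between $\tilde\gamma_1$ and $\tilde\gamma_2$ we have $\tilde\phi_0 = x_1$ and the metric is $dx_1^2 + dr^2 + c(r,\theta)\,d\theta^2$, so $\langle d\Phi, d\Phi\rangle_g = |dx_1|^2 - |dr|^2 + 2i\langle dx_1, dr\rangle_g = 1 - 1 + 0 = 0$ exactly. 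Then with $v_0 = c^{1/4} h(x_1+ir)\chi(\theta)$ the transport operator $2\langle d\Phi, dv_0\rangle_g + (\triangle_g \Phi) v_0$ vanishes identically: this is the standard fact that $c^{1/4}$ is the amplitude that kills the first transport equation for the CTA Laplacian restricted to the transversal plane (cf.\ \cite{DKSU}), using that $\triangle_g(x_1+ir)$ and the $\bar\partial$-equation in the $(x_1,r)$ variables conspire via the Cauchy--Riemann equations for $h$. Unlike the Euclidean case I do not need to build a power series in $\epsilon$; a single term suffices because here the construction is not required to concentrate on planes via fractional powers — although the series refinement could be added later exactly as in Section~6 if needed for the uniqueness argument.

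Next I would set up the correction term. Write $P_\tau(\cdot) = e^{-\tau\tilde\phi_0}\triangle_g(e^{\tau\tilde\phi_0}\cdot)$ (here $q_* \equiv 0$ by the reduction preceding the lemma) and expand
\[
e^{-\tau\Phi}\triangle_g(e^{\tau\Phi} v_0) = \tau^2\langle d\Phi, d\Phi\rangle_g v_0 + \tau\bigl(2\langle d\Phi, dv_0\rangle_g + (\triangle_g\Phi)v_0\bigr) + \triangle_g v_0.
\]
By the two identities above the $\tau^2$ and $\tau^1$ terms vanish on $\operatorname{supp}(\chi)$, leaving only $\triangle_g v_0$, which is bounded in $L^2(\Omega_1)$ independently of $\tau$. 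Since $\tilde\phi_0 - \Phi = -ir$ is purely imaginary, $e^{-\tau(\tilde\phi_0-\Phi)}$ is unimodular, so $\|e^{-\tau(\tilde\phi_0-\Phi)}e^{-\tau\Phi}\triangle_g(e^{\tau\Phi}v_0)\|_{L^2(\Omega_1)} \le C$. I would then set $r := -H_\tau\bigl(e^{-\tau(\tilde\phi_0-\Phi)}e^{-\tau\Phi}\triangle_g(e^{\tau\Phi}v_0)\bigr)$, where $H_\tau$ is the right inverse of $P_\tau$ furnished by the Carleman estimate (exactly as in Lemma~\ref{construction new}, which goes through verbatim once Lemma~\ref{est cta} is in hand), so that $\|r\|_{L^2(\Omega_1)} \le C/\tau$, and finally $r_0 := e^{\tau(\tilde\phi_0-\Phi)} r$. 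Because the exponential prefactor is unimodular, $\|r_0\|_{L^2(\Omega_1)} = \|r\|_{L^2(\Omega_1)} \le C/\tau$, and by construction $u_0 = e^{\tau\Phi}(v_0 + r_0)$ solves $-\triangle_g u_0 = 0$ in $\Omega_1$.

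The main obstacle, and the only place genuinely new relative to the Euclidean case, is confirming that the eikonal and first transport equations hold \emph{exactly} (not merely to high order) for $\Phi = \tilde\phi_0 + ir$ and $v_0 = c^{1/4}h(x_1+ir)\chi(\theta)$ in the chosen normal coordinates $(x_1, r, \theta)$ about $p$. This requires that $r$ be the Fermi/distance coordinate to $\gamma$ — so that $g = dx_1^2 + dr^2 + c\,d\theta^2$ with no cross terms and $|dr|_g = 1$ — and that $h$ be holomorphic in $z = x_1 + ir$, which forces $\langle d\Phi, dv_0\rangle_g$ to reduce to the $\theta$-independent Cauchy--Riemann combination that $c^{1/4}$ is designed to annihilate in $\triangle_g\Phi$; one uses here that in three dimensions $\triangle_g(c^{1/4}f) = c^{1/4}(\triangle_{dx_1^2+dr^2} + c^{-1}\partial_\theta(c^{-1}\partial_\theta\cdot) + \text{lower order})f$ and that $f = h(z)$ depends only on $x_1, r$. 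Once this algebraic verification is recorded, the remainder is an application of the already-established Carleman machinery word for word.
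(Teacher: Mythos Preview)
Your proposal is correct and matches the paper's approach exactly: the paper omits the proof with the remark that it is ``similar to Lemma~\ref{harmonic}'', and you have carried out precisely that parallel, using the Carleman weight of Lemma~\ref{est cta} to obtain the solution operator $H_\tau$ and then correcting the WKB ansatz just as in the Euclidean case. One small caution on the step you flag as the main obstacle: when you actually carry out the transport computation in the coordinates $g = dx_1^2 + dr^2 + c(r,\theta)\,d\theta^2$ you will find that $2\langle d\Phi, dv_0\rangle_g + (\triangle_g\Phi)v_0$ vanishes for the amplitude $c^{-1/4}$ rather than $c^{1/4}$ (the paper's $c^{1/4}$ appears to be a sign slip in the exponent; compare the standard DKSU amplitude $|g_0|^{-1/4}$), and $(r,\theta)$ are polar normal coordinates about the exterior point $p$ rather than Fermi coordinates about $\gamma$---but neither of these affects your argument, since the properties you actually use ($|dr|_g=1$, orthogonality, and the resulting exact eikonal/transport identities) hold once the exponent is corrected.
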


\begin{lemma}
\label{shrodinger2}
Let $q \in L^{\infty} (\Omega_1)$. There exists a family of exact solutions $u_1$ to $(-\triangle_g + q)u_1=0$ of form $ u_1 = e^{\tau \Phi} (v_0 + r_1)$
where $\|r_1\|_{L^2(\Omega_1)} \leq \frac{C}{\tau}$.
\end{lemma}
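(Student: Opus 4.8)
The plan is to repeat the fixed-point construction of Lemma~\ref{shrodinger} essentially verbatim, the only change being that the truncated complex phase $\tilde\Phi_\epsilon$ together with its $O(\tau^{-2})$-approximate eikonal and transport equations is replaced by the \emph{exact} complex phase $\Phi=\tilde\phi_0+ir$ and amplitude $v_0$ furnished by Lemma~\ref{harmonic2}. No fractional truncation parameter $\epsilon$ is needed here, because the conformally transversally anisotropic normal form on $V$ makes $x_1$ an honest limiting Carleman weight there and lets the geometric-optics equations $\langle d\Phi,d\Phi\rangle_g=0$ and $2\langle d\Phi,dv_0\rangle_g+(\triangle_g\Phi)v_0=0$ be solved exactly on $\operatorname{supp}v_0\subset V$ (the standard CTA amplitude, cf.\ \cite{DKSU}). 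Recall that in this section we have arranged $q_*\equiv 0$, so the relevant conjugated operator is $P_\tau r:=e^{-\tau\tilde\phi_0}\triangle_g(e^{\tau\tilde\phi_0}r)$.

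First I would record the solvability statement behind the construction. Combining Lemma~\ref{est cta} with the lemma, proved just after Lemma~\ref{est}, that turns the H\"ormander hypoellipticity condition into a global Carleman estimate, one obtains $\|e^{\tau\tilde\phi_0}\triangle_g(e^{-\tau\tilde\phi_0}v)\|_{L^2(\Omega_1)}\ge C\tau\|v\|_{L^2(\Omega_1)}+C\|Dv\|_{L^2(\Omega_1)}$ for every $v\in C^\infty_c(\Omega_1)$ and all large $\tau$. The Hahn--Banach/Riesz-representation argument of Lemma~\ref{construction new}, now with $q_*\equiv 0$, then furnishes a right inverse $H_\tau:L^2(\Omega_1)\to L^2(\Omega_1)$ with $P_\tau H_\tau=I$ and $\|H_\tau\|_{L^2(\Omega_1)\to L^2(\Omega_1)}\le C\tau^{-1}$.

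Next I would insert the ansatz $u_1=e^{\tau\Phi}(v_0+r_1)$ with $r_1:=e^{\tau(\tilde\phi_0-\Phi)}r=e^{-i\tau r}r$ into $(-\triangle_g+q)u_1=0$. Since $\Phi$ and $v_0$ solve the eikonal and transport equations exactly on $\operatorname{supp}v_0$, one has $e^{-\tau\Phi}\triangle_g(e^{\tau\Phi}v_0)=\triangle_g v_0$ and hence $(-\triangle_g+q)(e^{\tau\Phi}v_0)=e^{\tau\Phi}(q-\triangle_g)v_0$; using $e^{\tau\Phi}r_1=e^{\tau\tilde\phi_0}r$ and multiplying through by $e^{-\tau\tilde\phi_0}$, the equation $(-\triangle_g+q)u_1=0$ becomes $-P_\tau r+qr=f$, where $f:=e^{i\tau r}(\triangle_g-q)v_0$ lies in $L^2(\Omega_1)$ with $\|f\|_{L^2(\Omega_1)}\le C$ because $v_0\in C^\infty_c$ and $q\in L^\infty$.

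Finally I would solve $-P_\tau r+qr=f$ exactly as in Lemma~\ref{shrodinger}: the substitution $r=H_\tau\tilde r$ turns it into $(-I+qH_\tau)\tilde r=f$, and since $\|qH_\tau\|_{L^2(\Omega_1)\to L^2(\Omega_1)}\le C\|q\|_{L^\infty}\tau^{-1}<1$ for $\tau$ large, the operator $-I+qH_\tau$ is boundedly invertible by its Neumann series; thus $r:=H_\tau(-I+qH_\tau)^{-1}f$ satisfies $\|r\|_{L^2(\Omega_1)}\le C\tau^{-1}\|f\|_{L^2(\Omega_1)}\le C\tau^{-1}$, and since $\tilde\phi_0-\Phi=-ir$ is purely imaginary we get $|e^{\tau(\tilde\phi_0-\Phi)}|\equiv 1$, so $\|r_1\|_{L^2(\Omega_1)}=\|r\|_{L^2(\Omega_1)}\le C\tau^{-1}$; a direct computation as above then verifies $(-\triangle_g+q)u_1=0$. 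The analytic weight of the argument lies entirely in Lemma~\ref{est cta}; beyond that, the only points demanding care are bookkeeping ones, namely that $\operatorname{supp}v_0$ stays inside the CTA region $V$ (so that $\Phi=x_1+ir$ satisfies the eikonal equation on the nose and the geodesic $\gamma$ avoids the cut locus of $p$), that $\Phi$ is extended to a globally defined smooth function on $\Omega_1$ with $\Re\Phi\equiv\tilde\phi_0$, and that $\Im\Phi=r$ is real-valued so that conjugation by $e^{\tau(\tilde\phi_0-\Phi)}$ is an $L^2$ isometry. I therefore expect no genuine obstacle here: once the Carleman estimate is granted, this lemma is the word-for-word analogue of Lemma~\ref{shrodinger}, with the exact complex phase in place of $\tilde\Phi_\epsilon$.
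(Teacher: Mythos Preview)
Your proposal is correct and is precisely the approach the paper intends: the paper omits the proof of Lemma~\ref{shrodinger2} and states that it is similar to Lemma~\ref{shrodinger}, and you have reproduced that argument with the exact CTA phase $\Phi=\tilde\phi_0+ir$ and amplitude $v_0$ in place of $\tilde\Phi_\epsilon$ and $v_\epsilon$, using the Carleman estimate from Lemma~\ref{est cta} and the $q_*\equiv 0$ reduction. The bookkeeping points you flag (support of $v_0$ inside $V$, global extension of $\Im\Phi$, and $|e^{\tau(\tilde\phi_0-\Phi)}|\equiv 1$) are exactly the ones that make the translation go through.
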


\begin{remark}
The proofs of Lemmas ~\ref{harmonic2} and \ref{shrodinger2} are omitted as they are similar to Lemmas ~\ref{harmonic} and ~\ref{shrodinger}. The reader should also note that the proof of the reconstruction algorithm provided in the Euclidean setting can be duplicated here to obtain the trace of the CGO solutions provided above on $\partial \Omega$.
\end{remark}

\begin{proof}[Proof of Theorem ~\ref{cta}]

 Let $w = e^{-\tau \Phi} v_0$ and use the Green identity by pairing $ u_1 = e^{\tau \Phi} (v_0 + r_1)$ with $w$. Thus:
$$  I = \int_{\partial\Omega} w \partial_{\nu} u_1  - \int_{\partial\Omega} u_1 \partial_{\nu} w = \int_{\Omega} w \triangle_g u_1 - \int_{\Omega} u_1 \triangle_g w $$
Hence:
$$ I= \int_{\Omega} q w u_1 - \int_{\Omega} e^{\tau \Phi} (v_0 + r_1) \triangle_g  (e^{-\tau \Phi} v_1) $$

\noindent Let $ J= \int_{\Omega} v_0 e^{\tau \Phi} \triangle_g (e^{-\tau \Phi} v_0)$. Then:
$$ I+J = \int_{\Omega} q v_0(v_0 + r_1)  - \int_{\Omega} r_1 e^{\tau \Phi} \triangle( e^{-\tau\Phi} v_0)  $$ 
But notice that:
$$e^{\tau \Phi} \triangle (e^{-\tau\Phi} v_0)= \tau^2 \langle d\Phi,d\Phi \rangle_g v_0 -\tau [ 2\langle d\Phi,dv_0 \rangle_g + (\triangle_g \Phi) v_0 ] + \triangle_g v_0$$
Hence:
$$\| e^{\tau \Phi} \triangle (e^{-\tau\Phi} v_0)\|_{L^2(\Omega)} \leq C$$ and therefore using Cauchy-Schwarz we see that:
$$|\int_{\Omega} r_1 e^{\tau \Phi} \triangle (e^{-\tau\Phi} v_0)| \leq \frac{C}{\tau}$$
Similarly we notice that:
$$|  \int_{\Omega} q v_0r_1| \leq \frac{C}{\tau}$$

\noindent Thus:

$$ I+J = \int_{\Omega} q v_0^2 + O(\frac{1}{\tau})$$ 

\noindent Thus similarly to the Euclidean setting we see that by choosing $\chi(\theta)$ approximating a delta distribution we can obtain the knowledge of following data:
$$\int_{I \times \gamma} q h(z) $$

\noindent By choosing $h(z) = e^{-i \lambda z}$ we see that we have the knowledge of the following data:
$$ \int_{\gamma} \hat{q}(\lambda,r,\theta) e^{\lambda r} dr $$
\noindent where $\hat{q}$ denotes the fourier transform of $q$ in $x_1$. 

\noindent Let us note that by assumption the local geodesic transform on $U$ is invertible. We will use this to conclude that $q$ can be determined from the data above. Indeed suppose that we have a function $f$ such that :
$$ \int_{\gamma} f(\lambda,r,\theta) e^{\lambda r} dr = 0 \hspace{2cm} (*)$$

\noindent Setting $\lambda=0$ and using the injectivity of the geodesic ray transform we can conclude that $f(0,r,\theta)=0$. Now differentiating (*) with respect to $\lambda$ and setting $\lambda=0$ yields that $\partial_{\lambda} f |_{\lambda=0} \equiv 0$. Repeating this argument yields that all derivatives of $f$ must be zero. Since $f$ represents the Fourier transform of a compactly supported function it must be real analytic and therefore it must vanish everywhere. Thus we can conclude that the integral data $\int_{I \times \gamma} q h(z) $ indeed reconstructs the potential $q$ uniquely in $V$.\\
\end{proof}

\begin{remark}
Using the Gaussian beam quasi mode construction in \cite{DKLS} the result in Theorem ~\ref{cta} can be strengthened by removing the simplicity restriction required on $U_0$ and instead imposing the injectivity of geodesic ray transform on $U_0$. Examples of such manifolds for which this injectivity is known, are simple manifolds, manifolds which are Foliations by strictly convex hypersurfaces \cite{DKLS}[25]. \\
\end{remark}

\newpage


\end{document}